\documentclass[12pt]{article}

\usepackage{amsmath}
\usepackage{amsthm}
\usepackage{amssymb}
\usepackage{amsfonts}
\usepackage{array}
\usepackage{hyperref}
\usepackage{comment}

\pagestyle{plain}

\topmargin-10mm
\textheight230mm
\textwidth160mm
\oddsidemargin0mm

\sloppy

\newtheorem{theorem}{THEOREM}
\newtheorem{corollary}{COROLLARY}
\newtheorem{proposition}{PROPOSITION}
\newtheorem{meta-proposition}{META-PROPOSITION}
\newtheorem{lemma}{LEMMA}
\newtheorem{definition}{DEFINITION}
\newtheorem{meta-definition}{META-DEFINITION}

\newtheorem{example}{EXAMPLE}

\newcommand{\ol}[1]{\overline{#1}}


\usepackage{stackengine}
\stackMath
\usepackage{graphicx}
\def\vecsign{\mathchar"017E}
\def\dvecsign{\smash{\stackon[-2.35pt]{\vecsign}{\rotatebox{180}{$\vecsign$}}}}
\def\rsym#1{\def\useanchorwidth{T}\stackon[-5.2pt]{\mathbf{#1}}{\dvecsign}}
\def\rasy#1{\def\useanchorwidth{T}\stackon[-5.2pt]{\mathbf{#1}}{\!\vecsign}}


\begin{document}

\title{Consistency between transitive relations\\ and between cones}

\author{Tom Fischer\thanks{Institute of Mathematics, University of Wuerzburg, 
Emil-Fischer-Strasse 30, 97074 Wuerzburg, Germany.
Tel.: +49 931 3188911.
E-mail: {\tt tom.fischer@uni-wuerzburg.de}.
}\\
University of Wuerzburg}
\date{\today}

\maketitle

\begin{abstract}
A relation extends another relation consistently 
if its symmetric, respectively its asymmetric, part contains
the corresponding part of the smaller relation.
It is shown that there exists no finite circular chain made from two transitive relations $\mathbf{A}$ and $\mathbf{B}$
with at least one link from their asymmetric parts
if and only if there exists a total preorder which consistently extends both. 
Additionally, this extension is uniquely determined if and only if 
the reflexive transitive closure of the union of $\mathbf{A}$ and $\mathbf{B}$ is total.
Applications:
(1)
If the steps of a walk come from two positive cones, with at least one step from one of the cones' non-linear parts, 
then returning to the origin is impossible if and only if there exists a third cone of which the linear part contains each 
of the linear parts of the two original cones, and of which the non-linear part contains each of the two non-linear parts.
(2)
Reminiscent of the Fundamental Theorem of Asset Pricing,
absence of arbitrage is equivalent to the existence of 
a complete preference order which consistently extends a market's fair exchange relation and its 
objective strict preference order.
(3)
Another impossibility in microeconomics: For two agents, or an agent and a market, 
with additive positively homogeneous preferences, 
an allocation can never be Pareto optimal if they are not `cut from the same cloth'.
\end{abstract}

\noindent{\bf Key words:} 
Cone, preference order, preorder, strict partial order, transitive relation, vector order.\\

\noindent{\bf MSC2020:} 03E20, 06A06, 46A40, 91G15, 91B08.\\



\section{Intentions}
\label{sec:1}

Any relation splits into its symmetric and its asymmetric part.
For instance, a preorder (quasiorder) is the disjoint union of an equivalence relation and a strict partial order.
While this is trivial, it is easy to find examples of disjoint equivalence relations and strict partial orders
whose union is neither a preorder, nor even just a transitive relation.
The question of what could manifest a meaningful reverse direction of the splitting procedure for --
more generally than preorders -- transitive relations therefore seems to deserve closer examination.

In my literature research, I could not find any related previous results. Apart from partial orders,
transitive relations or preorders generally do not seem to draw a lot of attention in mathematics. 
At least in economic theory, preorders do, and are usually referred to
as preference orders. However, the focus there typically lies on questions 
that I will not consider here (e.g.~\cite{MaAn}). Textbooks -- as extensive and thorough they may be on orders and lattices 
(e.g.~\cite{RoSt} or \cite{ScBe}) -- seem to mostly skip over the topic of transitive relations and preorders, as well.

Diving further into the subject, it became clear to me that to meaningfully construct a preorder from a given equivalence relation
and a strict partial order first of all means that the given equivalence relation needs to be contained in (i.e.~imply) the one 
derived from (or implied by) the constructed preorder, and, similarly, the given strict partial order needs to be contained in (imply) the derived one.
Another observation was that, for the given equivalence relation and the given strict partial order, no finite closed, i.e.~circular, chain 
containing both relations should exist, as this in a quite obvious sense means that they are not consistent with one another. 
A corollary of my main result below states that these two conditions are, indeed, equivalent.

More generally, my main theorem states that two transitive relations are chain-consistent with one another 
in the sense that there exists no finite circular chain made from them
with at least one link from their asymmetric parts
if and only if there exists a total preorder which consistently extends both of them 
in the sense that the symmetric, respectively the asymmetric part of each of the two given
relations lies in the corresponding part of the extending preorder.
Expressed differently, two transitive relations are consistent if and only if they both can be meaningfully
embedded into a third one which also possesses a completeness property.
Additionally, the extending third relation is uniquely determined if and only if 
the reflexive transitive closure of the union of the two original relations is total.
These results generally contribute to the understanding of transitive relations, but also more
specifically to the understanding of the relationship between preorders, equivalence relations, 
and strict partial orders.

I then provide three applications of my results. One in geometry or topological vector spaces,
one in financial economics, and the third in microeconomics.

The first one is related to the fact that a walk with steps taken from a positive cone cannot end at the starting point 
if at least one step is from the cone's non-linear part, which is the cone less the largest linear space contained in it.
What if the steps come from two cones? 
My main theorem implies that if the steps of a walk come from two positive cones, with at least one step from one of the 
cones' non-linear parts, then returning to the origin is impossible if and only if there exists a third cone of which the linear 
part contains each of the linear parts of the two original cones, and of which the non-linear part contains each of the 
two non-linear parts of the original cones.
I also formulate this as follows: Two positive cones are path-consistent if and only if there exists a common consistent extension
into a third cone.
In an example, I then illustrate how a related notion of (path-)consistency between a cone and a linear functional
implies a strict type of positivity of that functional with respect to the cone.

The second example concerns financial economics or mathematical finance.
I show that if a market is modeled by an equivalence relation for its objectively fair exchanges,
meaning an indifference relation,
and by a strict partial order for its objective down-trades, meaning an objective strict
preference order, then such a market is free of arbitrage opportunities, or so-called free lunches,
if and only if there exists a complete preference order which consistently extends the 
fair exchange and the down-trade relation. I make the argument that this result is so reminiscent of the
seminal Fundamental Theorem of Asset Pricing, which also is an extension theorem, 
that it can be considered a broad generalization of it.

The third application is in microeconomics: For two agents, or an agent and a market, 
with additive positively homogeneous preferences, meaning vector preorders,
an allocation can never be Pareto optimal if the agents' preferences are not consistent
with one another in the earlier explained sense. Expressed differently, if both agents'
preferences cannot meaningfully, i.e.~consistently, be embedded into the same bigger preference order, 
they will never reach an equilibrium.

Some of the lemmas and propositions presented below are very straightforward. 
However, while being simple, 
I could not find most of the needed preliminary results on transitive relations or preorders 
in the required generality in the literature. 
For this work to be self-contained, I therefore wanted to include them. 
Where possible, I refer to the literature even if the reference only points to a related
result for partial orders.


\section{Basic properties of relations}

For the sake of notation and for the convenience of the reader, in this section, I recapitulate several basic definitions 
and properties of relations, as well as several basic results on them. In particular, the generation of relations with
certain properties is an important and later recurring topic.
In the following, all relations on a non-empty set $\mathcal{M}$ are understood to be homogeneous binary relations, meaning they are 
non-empty subsets of $\mathcal{M}\times \mathcal{M}$.

\begin{definition}
\label{def:relations}
A relation $\mathbf{R}$ on a set $\mathcal{M}$ can have one or several of the following properties.
For any $\alpha,\beta,\gamma,\delta\in\mathcal{M}$ and for $\alpha \mathbf{R} \beta$ denoting $(\alpha, \beta) \in \mathbf{R}$:
\begin{enumerate}
\item 
\label{T}
Transitivity:
If $\alpha \mathbf{R} \beta$ and $\beta \mathbf{R} \gamma$,
then $\alpha \mathbf{R} \gamma$.
\item 
\label{sym}
Symmetry: 
$\alpha \mathbf{R} \beta$ implies $\beta \mathbf{R} \alpha$.
\item 
\label{asy}
Asymmetry: 
$\alpha \mathbf{R} \beta$ implies $\beta \mathbf{R\!\!\!\!/}\, \alpha$.
\item 
\label{ref}
Reflexivity: 
$\alpha \mathbf{R} \alpha$.
\item 
\label{irr}
Irreflexivity: 
$\alpha \mathbf{R\!\!\!\!/}\; \alpha$.
\item 
\label{anti}
Antisymmetry: 
$\alpha \mathbf{R} \beta$ and $\beta \mathbf{R} \alpha$ implies $\alpha = \beta$.
\item 
\label{add}
Additivity:
For $\mathcal{M}$ a real linear space, 
$\alpha \mathbf{R} \beta$ and $\gamma \mathbf{R} \delta$ implies $(\alpha+\gamma) \mathbf{R} (\beta+\delta)$.
\item 
\label{PH}
Positive homogeneity:
For $\mathcal{M}$ a real linear space, 
$\alpha \mathbf{R} \beta$ and $r\in\mathbb{R}_0^+$ implies $(r\alpha) \mathbf{R} (r\beta)$.
\item 
\label{To}
Totality:
Either $\alpha \mathbf{R} \beta$, $\beta \mathbf{R} \alpha$, or both.
\end{enumerate}
\end{definition}

Obviously, properties of Def.~\ref{def:relations} can be mutually exclusive, or a combination of properties can 
imply another one. For instance, reflexivity with additivity implies transitivity.
At least in the specific case of partial orders (properties \ref{T}, \ref{ref}, and \ref{anti}; e.g.~\cite{ScBe}), 
the proposition below is textbook knowledge. 

\begin{proposition}
\label{prop:Intersections of relations}
Let $\mathcal{R}$ be a family of relations on $\mathcal{M}$ which all share the same non-empty 
subset of the first eight properties of Def.~\ref{def:relations}.
Assume that $\mathcal{M}$ is a real linear space if properties \ref{add} or \ref{PH} are considered.
If $\bigcap \mathcal{R}$ is non-empty, then it is a relation on $\mathcal{M}$ with the same set of properties
that the individual relations have.
\end{proposition}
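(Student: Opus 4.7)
The plan is to verify each of the eight listed properties in turn, using the fundamental observation that $(\alpha,\beta)\in \bigcap\mathcal{R}$ if and only if $\alpha \mathbf{R} \beta$ for every $\mathbf{R}\in\mathcal{R}$. Since the intersection is assumed non-empty, it qualifies as a relation under the convention introduced just above the proposition, so the only content is to check that each property from Def.~\ref{def:relations} transfers.

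For the five ``positive'' properties --- transitivity, symmetry, reflexivity, additivity, and positive homogeneity --- the argument is uniform: assume the hypothesis of the property holds for pairs in $\bigcap\mathcal{R}$; this pushes the hypothesis into every individual $\mathbf{R}\in\mathcal{R}$; invoking the property for each $\mathbf{R}$ yields the conclusion in every $\mathbf{R}$; which in turn means the conclusion lies in $\bigcap\mathcal{R}$. For example, for additivity, $\alpha\bigl(\bigcap\mathcal{R}\bigr)\beta$ and $\gamma\bigl(\bigcap\mathcal{R}\bigr)\delta$ yield $\alpha\mathbf{R}\beta$ and $\gamma\mathbf{R}\delta$ for every $\mathbf{R}$, hence $(\alpha+\gamma)\mathbf{R}(\beta+\delta)$ for every $\mathbf{R}$, hence $(\alpha+\gamma)\bigl(\bigcap\mathcal{R}\bigr)(\beta+\delta)$. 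Reflexivity is the base case with no hypothesis at all.

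For the three ``negative'' properties --- asymmetry, irreflexivity, and antisymmetry --- the argument is dual and uses the contrapositive direction of set-membership: if the forbidden configuration held in $\bigcap\mathcal{R}$, then it would hold in each $\mathbf{R}$, contradicting the property there. For instance, if $\alpha\bigl(\bigcap\mathcal{R}\bigr)\beta$ and also $\beta\bigl(\bigcap\mathcal{R}\bigr)\alpha$, then both hold in a single $\mathbf{R}\in\mathcal{R}$, and antisymmetry of that $\mathbf{R}$ forces $\alpha=\beta$; the irreflexivity and asymmetry cases are analogous. I expect no genuine obstacle, as the whole statement is really just a bookkeeping exercise; the only subtle point worth a parenthetical remark is why totality (property~\ref{To}) is excluded from the list --- because two total relations may disagree on which direction of a pair holds, so that neither direction survives in the intersection --- and why the non-emptiness hypothesis is required, namely because the paper's convention restricts the term ``relation'' to non-empty subsets of $\mathcal{M}\times\mathcal{M}$.
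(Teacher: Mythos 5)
Your argument is correct and follows exactly the property-by-property verification that the paper intends: the paper's own proof simply states that for each of the eight properties the check is straightforward, and your write-up supplies precisely those routine details (including the correct contrapositive handling of asymmetry, irreflexivity, and antisymmetry). Your closing remarks on why totality is excluded and why non-emptiness is needed are accurate but not part of the paper's proof.
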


\begin{proof}
For any single one of the properties \ref{T} to \ref{PH}, 
if all relations have the corresponding property, then it is straightforward to check that a non-empty intersection must have it.
\end{proof}

\begin{corollary}
\label{cor:generatedrelation}
Let $\mathbf{S}$ be an arbitrary non-empty subset of $\mathcal{M}\times \mathcal{M}$ and choose a non-empty subset of the properties 
\ref{T}, \ref{sym}, \ref{ref}, \ref{add}, and \ref{PH} of Def.~\ref{def:relations}.
Assume that $\mathcal{M}$ is a real linear space in case properties \ref{add} or \ref{PH} are chosen.
Then there exists a uniquely determined smallest relation on $\mathcal{M}$ that contains
$\mathbf{S}$ and that has the chosen properties.
\end{corollary}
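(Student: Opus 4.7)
My plan is to construct the smallest relation by taking an intersection, using Proposition~\ref{prop:Intersections of relations} as the main tool. The five properties in the chosen subset (\ref{T}, \ref{sym}, \ref{ref}, \ref{add}, \ref{PH}) were deliberately singled out from the nine properties in Definition~\ref{def:relations}; I would first observe that the omitted properties (asymmetry, irreflexivity, antisymmetry, totality) are precisely the ones that would fail for the full relation $\mathcal{M}\times\mathcal{M}$.

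Concretely, I would let $\mathcal{R}$ be the family of all relations on $\mathcal{M}$ that contain $\mathbf{S}$ and possess every property in the chosen subset. The first step is to verify that $\mathcal{R}$ is non-empty: the full relation $\mathbf{F}:=\mathcal{M}\times\mathcal{M}$ clearly contains $\mathbf{S}$, and it is transitive, symmetric, reflexive, additive (if $\mathcal{M}$ is a real linear space), and positively homogeneous; since the chosen subset lies inside $\{\ref{T},\ref{sym},\ref{ref},\ref{add},\ref{PH}\}$, $\mathbf{F}$ belongs to $\mathcal{R}$.

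Next I would set $\mathbf{R}^\ast := \bigcap \mathcal{R}$. Because $\mathbf{S}\subseteq \mathbf{R}$ for every $\mathbf{R}\in\mathcal{R}$, and $\mathbf{S}$ is non-empty, $\mathbf{R}^\ast$ is non-empty and still contains $\mathbf{S}$. Proposition~\ref{prop:Intersections of relations} then applies directly and yields that $\mathbf{R}^\ast$ is itself a relation on $\mathcal{M}$ with all the chosen properties, so $\mathbf{R}^\ast \in \mathcal{R}$. By construction, $\mathbf{R}^\ast \subseteq \mathbf{R}$ for every $\mathbf{R}\in\mathcal{R}$, so $\mathbf{R}^\ast$ is a smallest element. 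Uniqueness is then automatic: any other smallest element $\mathbf{R}^{\ast\ast}\in\mathcal{R}$ would have to satisfy both $\mathbf{R}^\ast\subseteq\mathbf{R}^{\ast\ast}$ and $\mathbf{R}^{\ast\ast}\subseteq\mathbf{R}^\ast$.

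I do not anticipate a genuine obstacle here; the only care-point is conceptual, namely to make sure that the hypotheses of Proposition~\ref{prop:Intersections of relations} are met, i.e.~that $\mathcal{R}$ is non-empty and that the chosen properties are among those for which Proposition~\ref{prop:Intersections of relations} guarantees closure under non-empty intersection. Both are fine precisely because the excluded properties (asymmetry, irreflexivity, antisymmetry, totality) are exactly those which $\mathbf{F}=\mathcal{M}\times\mathcal{M}$ might violate, so restricting the corollary to the five permitted properties is what makes the argument go through cleanly.
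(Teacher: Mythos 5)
Your proposal is correct and follows essentially the same route as the paper: observe that the trivial relation $\mathcal{M}\times\mathcal{M}$ has all five permitted properties and contains $\mathbf{S}$, so the family of candidate relations is non-empty, then apply Proposition~\ref{prop:Intersections of relations} to the (non-empty, since it contains $\mathbf{S}$) intersection and conclude minimality and uniqueness by construction. Your extra remark that the excluded properties are precisely those that $\mathcal{M}\times\mathcal{M}$ may fail is a nice conceptual observation, but the argument itself matches the paper's.
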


Smallest here means an inclusion-minimal relation.

\begin{proof}
Since the trivial relation, $\mathcal{M}\times \mathcal{M}$, has the properties \ref{T}, \ref{sym}, \ref{ref}, \ref{add}, and \ref{PH},
the intersection of all relations on $\mathcal{M}$ which contain $\mathbf{S}$ and have the chosen properties is non-empty.
By Prop.~\ref{prop:Intersections of relations}, this intersection again has the chosen properties. 
By construction, there is no smaller such relation.
\end{proof}

\begin{definition}
\label{def:Generated relation}
The uniquely determined smallest relation of Corollary \ref{cor:generatedrelation} is called the relation with the chosen properties
on $\mathcal{M}$ generated by $\mathbf{S}$.\end{definition}

From now on, transitive relations will be denoted by the symbol $\preccurlyeq$, which can come with various indices,
or other attached symbols, to create distinctive notation.

\begin{definition}
\label{def:Transitive relations}
A transitive relation
$\preccurlyeq$ is a relation with property \ref{T} of Def.~\ref{def:relations}.
For an arbitrary non-empty $\mathbf{S} \subset \mathcal{M}\times \mathcal{M}$, 
let $\preccurlyeq_\mathbf{S}$ denote the transitive relation generated by $\mathbf{S}$.
If $\mathcal{M}$ is a real linear space, let 
$\preccurlyeq^{add,ph}_\mathbf{S}$ denote the transitive additive positively homogeneous relation generated by $\mathbf{S}$.
\end{definition}

There is a reason why I positioned transitivity at the top in Def.~\ref{def:relations}. If I think of any relation that is supposed to have some ordering
capability, transitivity seems to be the most basic property I would expect: If some $\beta$ is at least as great or as good as some $\alpha$, and some
$\gamma$ is at least as good or as great as $\beta$, then I should reasonably be able to expect $\gamma$ to be at least as great or good as $\alpha$. 
With regard to (strictly) ordering anything, there seems to be little value to a relation that is not at least transitive. This attitude is, of course, reflected
by the presence of transitivity in the contemporary definitions of preorders, partial orders, and strict orders 
(see also Def.~\ref{def:Preorder} and Def.~\ref{def:strict partial order} below).

\begin{definition}
\label{def:transitive closure}
Let $\mathbf{S}$ be an arbitrary non-empty subset of $\mathcal{M}\times \mathcal{M}$. 
The relation $\mathbf{S}^{t}$ defined by
$\alpha \mathbf{S}^{t} \beta$ for $\alpha, \beta \in \mathcal{M}$, if, for some $n\in\mathbb{N}\setminus\{0,1\}$,
$(\gamma_i,\gamma_{i+1})\in \mathbf{S}$ for $i=1,\ldots,n-1$ and $\alpha = \gamma_1$ and $\beta = \gamma_n$,
is called the transitive closure of $\mathbf{S}$.
\end{definition}


I omit the proof for the following textbook result (cf.~\cite{RoSt}, \cite{ScBe}).

\begin{proposition}
\label{prop:transitive closure}
The transitive relation generated by $\mathbf{S}$ is its transitive closure, 
\begin{eqnarray}
\preccurlyeq_\mathbf{S}
& = &
\mathbf{S}^{t} .
\end{eqnarray}
\end{proposition}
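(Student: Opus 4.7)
The plan is to establish the two inclusions separately, which is the standard approach for identifying a ``generated'' object with an explicit construction. Both directions are elementary; the only real content is the inductive unpacking of the chain definition and its concatenation.

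First I would verify the easier direction, $\mathbf{S}^{t} \subseteq \; \preccurlyeq_\mathbf{S}$. Let $(\alpha,\beta) \in \mathbf{S}^{t}$, witnessed by a chain $\gamma_1,\ldots,\gamma_n$ with $\alpha=\gamma_1$, $\beta=\gamma_n$, and $(\gamma_i,\gamma_{i+1}) \in \mathbf{S}$ for $i=1,\ldots,n-1$. Since $\mathbf{S} \subseteq \; \preccurlyeq_\mathbf{S}$, each consecutive pair lies in $\preccurlyeq_\mathbf{S}$, and a straightforward induction on $n \geq 2$ using the transitivity of $\preccurlyeq_\mathbf{S}$ yields $(\alpha,\beta) \in \; \preccurlyeq_\mathbf{S}$.

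For the reverse inclusion $\preccurlyeq_\mathbf{S} \; \subseteq \mathbf{S}^{t}$, I would argue that $\mathbf{S}^{t}$ is itself a transitive relation containing $\mathbf{S}$, so by the minimality of $\preccurlyeq_\mathbf{S}$ given by Corollary \ref{cor:generatedrelation} and Definition \ref{def:Generated relation}, the inclusion follows. Containment of $\mathbf{S}$ in $\mathbf{S}^{t}$ is immediate from Def.~\ref{def:transitive closure} with $n=2$. For transitivity, suppose $(\alpha,\beta) \in \mathbf{S}^{t}$ via a chain $\gamma_1,\ldots,\gamma_n$ and $(\beta,\delta) \in \mathbf{S}^{t}$ via a chain $\eta_1,\ldots,\eta_m$. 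Since $\gamma_n = \beta = \eta_1$, concatenating the two chains yields a single chain $\gamma_1,\ldots,\gamma_n,\eta_2,\ldots,\eta_m$ from $\alpha$ to $\delta$ whose consecutive pairs all lie in $\mathbf{S}$, proving $(\alpha,\delta) \in \mathbf{S}^{t}$.

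There is no real obstacle here: both inclusions are nearly unwindings of the definitions. The only mildly non-trivial point is ensuring that the concatenation construction properly reindexes the chain so that all consecutive pairs remain in $\mathbf{S}$, which is immediate once one notes that the ``joint'' $\gamma_n = \eta_1 = \beta$ is swallowed by a single copy in the concatenated sequence. Since the author refers to this as a textbook result (cf.~\cite{RoSt}, \cite{ScBe}), no further subtlety is expected.
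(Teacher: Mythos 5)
Your proof is correct and is exactly the standard double-inclusion argument (induction on chain length for $\mathbf{S}^{t} \subseteq\; \preccurlyeq_\mathbf{S}$, and transitivity of $\mathbf{S}^{t}$ via chain concatenation plus minimality for the converse); the paper itself omits the proof, citing it as a textbook result, and your argument is precisely the one intended there. No gaps.
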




\begin{definition}
\label{def:Preorder}
A preorder is a reflexive transitive relation (\ref{T} and \ref{ref} of Def.~\ref{def:relations}).
\end{definition}

Historically, for instance in the important publication \cite{MaHo} by McNeille, preorders were also simply called `orders'.
In the contemporary literature (cf.~\cite{RoSt}), partial orders are defined as antisymmetric preorders
(\ref{T}, \ref{ref}, and \ref{anti} of Def.~\ref{def:relations}). The use of the term quasiorder for a preorder seems to
be less common (cf.~\cite{RoSt}, \cite{ScBe}).

\begin{definition}
\label{def:VectorPreorder}
A vector preorder is an additive, positively homogeneous preorder (\ref{T}, \ref{ref}, \ref{add}, and \ref{PH} of Def.~\ref{def:relations}).
\end{definition}

\begin{lemma}
\label{lem:1lkm}
A total transitive relation (\ref{T} and \ref{To} of Def.~\ref{def:relations}) is reflexive and, thus, a total preorder.
\end{lemma}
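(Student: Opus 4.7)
The plan is short. By Def.~\ref{def:Preorder}, a preorder is a transitive reflexive relation, so since transitivity is already in the hypotheses and totality will be carried over from the hypotheses, the entire content of the lemma reduces to establishing reflexivity of $\preccurlyeq$.

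To produce reflexivity, I would fix an arbitrary $\alpha\in\mathcal{M}$ and apply property \ref{To} of Def.~\ref{def:relations} to the diagonal pair $(\alpha,\alpha)$. The disjunction ``$\alpha\preccurlyeq\alpha$, $\alpha\preccurlyeq\alpha$, or both'' collapses in every case to $\alpha\preccurlyeq\alpha$. Since $\alpha$ was arbitrary, property \ref{ref} holds, and together with the assumed transitivity this confirms that $\preccurlyeq$ is a preorder; the total preorder label then follows from the totality hypothesis.

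There is essentially no real obstacle here: totality as written in Def.~\ref{def:relations} is an inclusive disjunction quantified over all pairs $(\alpha,\beta)$, including the diagonal, and by itself already forces reflexivity. Transitivity is invoked only to promote the resulting reflexive total relation to a preorder in the sense of Def.~\ref{def:Preorder}. If one instead adopted the more restrictive convention of totality only for $\alpha\neq\beta$ (sometimes called connexity), a genuine use of transitivity would be required, picking some $\beta\neq\alpha$, extracting both directions $\alpha\preccurlyeq\beta$ and $\beta\preccurlyeq\alpha$ from two applications of totality (or from a separate argument using the paper's blanket non-emptiness assumption on relations to handle the singleton case), and then closing the loop $\alpha\preccurlyeq\beta\preccurlyeq\alpha$ transitively. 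Under the definitions actually stated in the paper, however, no such refinement is needed and the one-line argument above suffices.
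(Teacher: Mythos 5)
Your main argument is correct and is exactly the (omitted) justification the paper intends: totality in Def.~\ref{def:relations} is quantified over all pairs, including the diagonal, so applying it to $(\alpha,\alpha)$ immediately gives $\alpha\preccurlyeq\alpha$, and reflexivity together with the assumed transitivity and totality yields a total preorder in the sense of Def.~\ref{def:Preorder}. Only your closing aside about the connex convention is mistaken: totality restricted to $\alpha\neq\beta$ gives just \emph{one} of the two directions, not both, and in fact a strict total order such as $<$ on $\mathbb{R}$ is transitive and connex yet irreflexive, so under that convention no appeal to transitivity could recover reflexivity and the lemma would simply fail.
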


\begin{definition}[Reflexive closure]
\label{def:Reflexive closure}
Let $\mathbf{S}$ be an arbitrary non-empty subset of $\mathcal{M}\times \mathcal{M}$. 
The relation $\mathbf{S}^{r} = \mathbf{S} \cup \{(\alpha, \alpha) : \alpha \in \mathcal{M}\}$
is called the reflexive closure of $\mathbf{S}$.
\end{definition}

By definition, $\mathbf{S}^{r}$ is the smallest reflexive relation containing $\mathbf{S}$. 
It is straightforward to see that the generation of the transitive closure and of the reflexive closure is commutative,
i.e.~$(\mathbf{S}^{r})^t = (\mathbf{S}^{t})^r$.
Thus, $\preccurlyeq^r_\mathbf{S}$ is the uniquely determined smallest preorder which contains $\mathbf{S}$.

\begin{lemma}
\label{lem:3mjy}
Let $\preccurlyeq_1$ and $\preccurlyeq_2$ be two vector preorder. 
Then, $\preccurlyeq^{add,ph}_{\preccurlyeq_1 \cup \, \preccurlyeq_2}$ is a vector preorder,
and $\preccurlyeq^{add,ph}_{\preccurlyeq_1 \cup \, \preccurlyeq_2} \; = \; \preccurlyeq_{\preccurlyeq_1 \cup \, \preccurlyeq_2}$.
\end{lemma}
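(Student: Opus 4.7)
The plan is to prove the equality first, and then read off that both sides are vector preorders. Write $\preccurlyeq \; := \; \preccurlyeq_{\preccurlyeq_1 \cup \, \preccurlyeq_2}$ for the plain transitive closure (which by Prop.~\ref{prop:transitive closure} is described explicitly by finite chains of pairs drawn from $\preccurlyeq_1 \cup \preccurlyeq_2$), and $\preccurlyeq^* \; := \; \preccurlyeq^{add,ph}_{\preccurlyeq_1 \cup \, \preccurlyeq_2}$. The inclusion $\preccurlyeq \; \subseteq \; \preccurlyeq^*$ is immediate from Cor.~\ref{cor:generatedrelation}, since $\preccurlyeq^*$ is transitive and contains $\preccurlyeq_1 \cup \preccurlyeq_2$. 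For the reverse direction, it suffices to verify that $\preccurlyeq$ is already additive and positively homogeneous; then $\preccurlyeq$ is itself a transitive, additive, positively homogeneous relation containing $\preccurlyeq_1 \cup \preccurlyeq_2$, so the minimality of $\preccurlyeq^*$ forces $\preccurlyeq^* \; \subseteq \; \preccurlyeq$.

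Positive homogeneity of $\preccurlyeq$ is the easy half: given $\alpha \preccurlyeq \beta$ via a chain $\alpha = \gamma_1, \gamma_2, \ldots, \gamma_n = \beta$ with each consecutive pair in $\preccurlyeq_1$ or in $\preccurlyeq_2$, positive homogeneity of the individual vector preorders lifts each link under scaling by $r \geq 0$, producing the chain $r\alpha = r\gamma_1, \ldots, r\gamma_n = r\beta$, hence $r\alpha \preccurlyeq r\beta$.

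Additivity is the step that uses reflexivity crucially. Given $\alpha \preccurlyeq \beta$ and $\gamma \preccurlyeq \delta$, pick chains realizing them. For the first chain $\alpha = \gamma_1, \ldots, \gamma_n = \beta$, each link $(\gamma_i, \gamma_{i+1})$ lies in some $\preccurlyeq_k$, and by reflexivity $(\gamma, \gamma) \in \, \preccurlyeq_k$ as well; additivity of $\preccurlyeq_k$ then yields $(\gamma_i + \gamma, \gamma_{i+1} + \gamma) \in \, \preccurlyeq_k \; \subseteq \; \preccurlyeq_1 \cup \preccurlyeq_2$. Stringing these new links together gives $\alpha + \gamma \preccurlyeq \beta + \gamma$. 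A symmetric argument applied to the second chain, translated by $\beta$, gives $\beta + \gamma \preccurlyeq \beta + \delta$. Transitivity of $\preccurlyeq$ then yields $\alpha + \gamma \preccurlyeq \beta + \delta$, proving additivity.

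Finally, because $\preccurlyeq_1$ (say) is reflexive, $\{(\alpha, \alpha) : \alpha \in \mathcal{M}\} \subseteq \, \preccurlyeq_1 \; \subseteq \; \preccurlyeq$, so $\preccurlyeq$ is reflexive. Together with transitivity, additivity, and positive homogeneity, this means $\preccurlyeq$ (and hence $\preccurlyeq^*$) is a vector preorder, completing the proof. The only step with any content is the additivity argument, where the insertion of a reflexive pair on the side one is not yet touching is exactly what allows a single-link additivity property to propagate along a finite chain; everything else is a direct appeal to the generation-by-intersection picture of Prop.~\ref{prop:Intersections of relations} and Cor.~\ref{cor:generatedrelation}.
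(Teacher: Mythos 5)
Your proposal is correct and follows essentially the same route as the paper: it establishes positive homogeneity by scaling each link of a generating chain, and additivity by translating the first chain by $\gamma$ and the second by $\beta$ (inserting reflexive pairs via additivity of the individual vector preorders) and concatenating, exactly as in the paper's proof. The only difference is that you spell out the minimality argument for the equality $\preccurlyeq^{add,ph}_{\preccurlyeq_1 \cup \, \preccurlyeq_2} \; = \; \preccurlyeq_{\preccurlyeq_1 \cup \, \preccurlyeq_2}$ more explicitly, which the paper leaves implicit.
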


Therefore, with Prop.~\ref{prop:transitive closure}, the transitive additive positively homogeneous relation generated by two
vector preorders is the transitive closure of their union.

\begin{proof}[Proof of Lemma \ref{lem:3mjy}]
The relation $\preccurlyeq_{\preccurlyeq_1 \cup \, \preccurlyeq_2}$ is a preorder, because it is transitive and, with 
$\preccurlyeq_1$ and $\preccurlyeq_2$, it again is reflexive. It now suffices to show that 
$\preccurlyeq_{\preccurlyeq_1 \cup \, \preccurlyeq_2}$ is additive and positively homogeneous. Positive homogeneity follows 
via Prop.~\ref{prop:transitive closure}
from Def.~\ref{def:transitive closure}, because the chain in that definition can be scaled by any $r\in\mathbb{R}_0^+$.
Assume now $(\alpha, \beta), (\ol\alpha, \ol\beta)\in \,\preccurlyeq_{\preccurlyeq_1 \cup \, \preccurlyeq_2}$.
By Def.~\ref{def:transitive closure} and Prop.~\ref{prop:transitive closure},
$(\alpha, \beta)\in \,\preccurlyeq_{\preccurlyeq_1 \cup \, \preccurlyeq_2}$ means that 
for some $n\in\mathbb{N}\setminus\{0,1\}$,
$(\gamma_i,\gamma_{i+1})\in (\preccurlyeq_1 \cup \preccurlyeq_2)$ for $i=1,\ldots,n-1$ and $\alpha = \gamma_1$ and $\beta = \gamma_n$.
I can add $(\ol\alpha,\ol\alpha)$ to each link of this chain, because of reflexivity and additivity of both, $\preccurlyeq_1$ and $\preccurlyeq_2$.
The so obtained chain with links in $\preccurlyeq_1 \cup \preccurlyeq_2$
therefore starts with $(\alpha+\ol\alpha, \gamma_2+\ol\alpha)$ and ends with $(\gamma_{n-1}+\ol\alpha, \beta+\ol\alpha)$.
Similarly, there exists a chain for $(\ol\alpha, \ol\beta)\in \,\preccurlyeq_{\preccurlyeq_1 \cup \, \preccurlyeq_2}$ that
starts with $(\beta+\ol\alpha, \beta + \ol\gamma_2)$ and ends with $(\beta+\ol\gamma_{\ol n-1}, \beta+\ol\beta)$ for 
appropriate $\ol n$ and $(\ol\gamma_i,\ol\gamma_{i+1})\in (\preccurlyeq_1 \cup \preccurlyeq_2)$ for $i=1,\ldots,\ol n-1$.
Thus, summarily, a chain with links in $\preccurlyeq_1 \cup \preccurlyeq_2$ exists that starts with $(\alpha+\ol\alpha, \gamma_2+\ol\alpha)$
and ends with $(\beta+\ol\gamma_{\ol n-1}, \beta+\ol\beta)$. Therefore, 
$(\alpha+\ol\alpha, \beta+\ol\beta)\in \,\preccurlyeq_{\preccurlyeq_1 \cup \, \preccurlyeq_2}$,
and additivity is established.
\end{proof}


\section{Equivalence relations and strict partial orders}
\label{sec:equivalence relations and strict partial orders}

In this section, and next to providing more definitions, I derive several results
on calculus with relations, as well as some statements concerning the splitting and the merging 
of transitive relations.

\begin{definition}
\label{def:Transitive relations2}
For any relation $\mathbf{R}$,
\begin{eqnarray}
\label{eq:2aswA}
\rsym{R}
& =  & 
\{(\alpha,\beta) \in \mathbf{R} : \alpha \mathbf{R} \beta \text{ and } \beta \mathbf{R} \alpha\} 
\end{eqnarray}
is its symmetric part, and 
\begin{eqnarray}
\label{eq:2asw}
\rasy{R}
& =  & 
\mathbf{R} \setminus \rsym{R}
\end{eqnarray}
its asymmetric part.
For a transitive relation $\preccurlyeq$, I denote $\sim \; = \rsym{\preccurlyeq}$ and $\prec \; = \rasy{\preccurlyeq}$.
\end{definition}

Obviously, one of the parts \eqref{eq:2aswA} and \eqref{eq:2asw} can be empty. 

Formulated for total transitive relations, Prop.~1.B.1 in \cite{MaAn} implies the first statement of the following lemma.

\begin{lemma}
\label{lem:1csv}
Let $\preccurlyeq$ be a transitive relation.
\begin{enumerate}
\item
If $\alpha \sim \beta$ and $\beta \sim \gamma$, then $\alpha \sim \gamma$.
If (a) $\alpha \prec \beta$ and $\beta \sim \gamma$, or
if (b) $\alpha \sim \beta$ and $\beta \prec \gamma$, or
if (c) $\alpha \prec \beta$ and $\beta \prec \gamma$, then $\alpha \prec \gamma$.
\item
Let $\preccurlyeq$ also be reflexive and additive.
If $\alpha \sim \beta$ and $\gamma \sim \delta$, then $\alpha + \gamma \sim \beta + \delta$.
If $\alpha \prec \beta$ and $\gamma \sim \delta$, or
if $\alpha \prec \beta$ and $\gamma \prec \delta$, then $\alpha + \gamma \prec \beta + \delta$.
\item
Let $\preccurlyeq$ also be positively homogeneous, and assume $r\in \mathbb{R}_0^+$.
If $\alpha \sim \beta$, then $r\alpha \sim r\beta$.
If $\alpha \prec \beta$, then $r\alpha \prec r\beta$.
\end{enumerate}
\end{lemma}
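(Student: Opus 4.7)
The plan is to handle all three parts by first unfolding $\alpha \sim \beta \iff \alpha \preccurlyeq \beta \wedge \beta \preccurlyeq \alpha$ and $\alpha \prec \beta \iff \alpha \preccurlyeq \beta \wedge \beta \not\preccurlyeq \alpha$, and then performing routine case analyses that invoke, respectively, transitivity alone, additivity together with reflexivity, and positive homogeneity.

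For Part 1, transitivity of $\preccurlyeq$ gives the weak direction $\alpha \preccurlyeq \gamma$ in every subcase; in the $\sim$--$\sim$ subcase, transitivity applied in the reverse direction additionally yields $\gamma \preccurlyeq \alpha$. In each of the three strict subcases, I would then suppose $\gamma \preccurlyeq \alpha$ for contradiction and derive, by one further application of transitivity combined with the remaining weak inequality, a conclusion ($\beta \preccurlyeq \alpha$ or $\gamma \preccurlyeq \beta$) that contradicts the asymmetric content of one of the hypotheses.

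For Part 2, applying additivity to both directions of the given pairs gives $\alpha + \gamma \sim \beta + \delta$ in the $\sim$--$\sim$ case. For each strict case, additivity first yields $\alpha + \gamma \preccurlyeq \beta + \delta$; I would then suppose $\beta + \delta \preccurlyeq \alpha + \gamma$ for contradiction, invoke additivity with an appropriately chosen parallel weak inequality (e.g.~$\gamma \preccurlyeq \delta$ in the $\prec$--$\sim$ case) so as to obtain $\beta + \gamma + \delta \preccurlyeq \alpha + \gamma + \delta$, and finally cancel the common summand $\gamma + \delta$ to reach $\beta \preccurlyeq \alpha$, contradicting $\alpha \prec \beta$. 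The main obstacle is the cancellation step, which is not generally free for preorders; I would therefore record it as a preliminary observation, exploiting that $\mathcal{M}$ is a real linear space: for any $\epsilon \in \mathcal{M}$, reflexivity gives $-\epsilon \preccurlyeq -\epsilon$, and a single application of additivity then converts $\mu + \epsilon \preccurlyeq \nu + \epsilon$ into $\mu \preccurlyeq \nu$.

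For Part 3, positive homogeneity applied to both directions of $\alpha \sim \beta$ immediately yields $r\alpha \sim r\beta$. For the strict case, positive homogeneity gives $r\alpha \preccurlyeq r\beta$; to rule out $r\beta \preccurlyeq r\alpha$ I would invoke positive homogeneity a second time, with scalar $1/r$ (valid for $r > 0$), which would produce $\beta \preccurlyeq \alpha$ in contradiction with $\alpha \prec \beta$. The boundary value $r = 0$ is degenerate since it forces $r\alpha = r\beta$ and hence $r\alpha \sim r\beta$; the strict assertion should therefore be understood as pertaining to $r > 0$.
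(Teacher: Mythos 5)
Your proposal is correct and follows essentially the same route as the paper's proof: transitivity plus a contradiction argument in Part 1, additivity combined with reflexivity of the negated vectors to cancel the common summand in Part 2, and rescaling by $1/r$ in Part 3. Your explicit remark that the strict statement in Part 3 only pertains to $r>0$ (since $r=0$ forces $r\alpha = r\beta$) is a small but valid refinement of an edge case that the paper's statement and proof, which tacitly divide by $r$, pass over.
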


\begin{proof}
1.
Transitivity implies for all these cases $\alpha \preccurlyeq \gamma$. 
The first statement follows, because symmetry of $\sim$ implies that the reverse holds, too.
For (a), assume $\gamma \preccurlyeq \alpha$. Now, $\alpha \preccurlyeq \beta \preccurlyeq \gamma \preccurlyeq \alpha$, 
which implies the contradiction $\alpha \sim \beta$. Statements (b) and (c) follow in a similar manner.

2. Additivity implies for all three cases $\alpha + \gamma \preccurlyeq \beta + \delta$. The first statement holds, because symmetry of $\sim$ implies that
$\beta + \delta \preccurlyeq \alpha + \gamma$, as well.
The second statement follows, because if $\beta + \delta \preccurlyeq \alpha + \gamma$ held true, then $\gamma \preccurlyeq \delta$,
$-\gamma \preccurlyeq -\gamma$, and $-\delta \preccurlyeq -\delta$ could be added to obtain the contradiction $\beta \preccurlyeq \alpha$.
The third statement follows in a similar manner.

3. Since $\alpha \preccurlyeq \beta$ and $\beta \preccurlyeq \alpha$, the first statement follows immediately from positive homogeneity.
For the second one, if $r\beta \preccurlyeq r\alpha$, then multiplication with $1/r$ would imply the contradiction $\beta \preccurlyeq \alpha$.
\end{proof}

\begin{definition}
\label{def:equivalence relation}
An equivalence relation 
is a symmetric preorder  (\ref{T}, \ref{sym}, and \ref{ref} of Def.~\ref{def:relations}).
\end{definition}

For the next definition note that asymmetry implies irreflexivity.

\begin{definition}
\label{def:strict partial order}
A strict partial order
is an asymmetric transitive relation (\ref{T}, \ref{asy}, and \ref{irr} of Def.~\ref{def:relations}).
\end{definition}

A transitive relation on $\mathcal{M}$ splits into an equivalence relation on a subset of $\mathcal{M}$ and 
a strict partial order on $\mathcal{M}$. For total transitive relations, this is (i) and (ii) of Prop.~1.B.1 in \cite{MaAn}.

\begin{lemma}
\label{lem:Implied equivalence and strict preference}
For any transitive relation $\preccurlyeq$ on a set $\mathcal{M}$, if non-empty, $\sim$ 
is a symmetric transitive relation on $\mathcal{M}$ and therefore
an equivalence relation on 
\begin{eqnarray}
\mathcal{M}_{\sim} & := & \{\alpha \in\mathcal{M} : \alpha \sim \beta \text{ for some } \beta\in\mathcal{M}\} .
\end{eqnarray}
If non-empty, 
$\prec$ is a strict partial order on $\mathcal{M}$.
If $\preccurlyeq$ has the property \ref{anti}, or simultaneously the properties \ref{ref} and \ref{add}, or the property \ref{PH} of Def.~\ref{def:relations}, 
then so do $\sim$ and $\prec$.
\end{lemma}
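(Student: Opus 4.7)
The plan is to treat this lemma essentially as a bookkeeping consequence of Lemma \ref{lem:1csv}, which already does all the heavy lifting on how $\sim$ and $\prec$ chain together under transitivity, additivity, and positive homogeneity. The three assertions of the statement decompose cleanly: (i) structural properties of $\sim$, (ii) structural properties of $\prec$, and (iii) inheritance of the additional properties \ref{anti}, (\ref{ref} $\wedge$ \ref{add}), and \ref{PH}. I would handle them in that order.

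For $\sim$, symmetry is immediate from the definition \eqref{eq:2aswA}, since the defining conjunction $\alpha \preccurlyeq \beta \wedge \beta \preccurlyeq \alpha$ is manifestly invariant under swapping $\alpha$ and $\beta$. Transitivity of $\sim$ is exactly the first sentence of Lemma \ref{lem:1csv}(1). To upgrade this to an equivalence relation on $\mathcal{M}_\sim$, I pick any $\alpha \in \mathcal{M}_\sim$, take a witness $\beta$ with $\alpha \sim \beta$, use symmetry to obtain $\beta \sim \alpha$, and apply transitivity of $\sim$ to get the reflexivity $\alpha \sim \alpha$.

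For $\prec$, transitivity is case (c) of Lemma \ref{lem:1csv}(1). Asymmetry is a one-line contradiction: if both $\alpha \prec \beta$ and $\beta \prec \alpha$ held, then both $\alpha \preccurlyeq \beta$ and $\beta \preccurlyeq \alpha$ would hold, so by \eqref{eq:2aswA} the pair $(\alpha,\beta)$ would lie in $\rsym{\preccurlyeq}$, contradicting $(\alpha,\beta) \in \rasy{\preccurlyeq}$ by the disjointness enforced in \eqref{eq:2asw}. Since asymmetry entails irreflexivity (as noted before Def.~\ref{def:strict partial order}), $\prec$ satisfies \ref{T}, \ref{asy}, and \ref{irr}, making it a strict partial order on $\mathcal{M}$.

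For the inheritance claim, I would argue property by property. If $\preccurlyeq$ is antisymmetric, then $\alpha \sim \beta$ forces $\alpha \preccurlyeq \beta$ and $\beta \preccurlyeq \alpha$, hence $\alpha = \beta$, so $\sim$ is antisymmetric (indeed trivial on $\mathcal{M}_\sim$); and $\prec$ is even asymmetric, hence a fortiori antisymmetric. If $\preccurlyeq$ is reflexive and additive, the additivity of both $\sim$ and $\prec$ is exactly Lemma \ref{lem:1csv}(2), and reflexivity of $\preccurlyeq$ gives $\alpha \sim \alpha$ for every $\alpha$, so $\mathcal{M}_\sim = \mathcal{M}$ and $\sim$ is reflexive on all of $\mathcal{M}$. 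Positive homogeneity of $\sim$ and $\prec$ is immediate from Lemma \ref{lem:1csv}(3). There is no substantive obstacle here — the only point requiring care is interpretive: the phrase ``so do $\sim$ and $\prec$'' must be read as inheriting whichever of the listed properties is compatible with each part's symmetry type (since $\prec$, being asymmetric, cannot literally be reflexive), and the equivalence-relation conclusion for $\sim$ genuinely requires the restriction to $\mathcal{M}_\sim$ unless $\preccurlyeq$ happens to be reflexive.
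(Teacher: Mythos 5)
Your proposal is correct and follows essentially the same route as the paper's own proof: symmetry of $\sim$ by definition, everything transitive/additive/positively homogeneous delegated to Lemma \ref{lem:1csv}, asymmetry of $\prec$ from the fact that $\sim$ exhausts the symmetric part, and antisymmetry inherited directly (vacuously for $\prec$). Your added details (explicit reflexivity argument on $\mathcal{M}_\sim$ and the caveat that $\prec$ cannot literally be reflexive) only spell out what the paper leaves implicit.
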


By \eqref{eq:2asw}, $\sim$  and $\prec$ are disjoint (mutually exclusive), i.e.~$\sim \cap \prec \; = \; \emptyset$.

\begin{proof}
$\sim$ is symmetric by definition and inherits transitivity (Lemma \ref{lem:1csv}). It therefore is reflexive, symmetric and transitive on $\mathcal{M}_{\sim}$. 
Since $\sim$ is the entire symmetric part of $\preccurlyeq$, the rest, $\prec$, if non-empty,
is asymmetric. 
Transitivity is given by Lemma \ref{lem:1csv}.
Optional antisymmetry: While $\prec$ has the property in the sense that it applies to the empty set, $\sim$ directly inherits it from $\preccurlyeq$.
Optional reflexivity and additivity, or optional positive homogeneity of $\sim$ and $\prec$: Lemma \ref{lem:1csv}.
\end{proof}

\begin{corollary}
\label{cor:2oiv}
A preorder on $\mathcal{M}$  splits into an equivalence relation on $\mathcal{M}$  and a strict partial order on $\mathcal{M}$ . 
\end{corollary}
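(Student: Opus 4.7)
The plan is to deduce this as a direct consequence of Lemma \ref{lem:Implied equivalence and strict preference}. By the definition of a preorder (Def.~\ref{def:Preorder}), $\preccurlyeq$ is both reflexive and transitive, so the lemma applies and tells me immediately that $\prec$ is a strict partial order on $\mathcal{M}$ (when non-empty) and that $\sim$ is a symmetric transitive relation on $\mathcal{M}$ which, on the subset $\mathcal{M}_{\sim}$, is an equivalence relation. Furthermore, by Def.~\ref{def:Transitive relations2}, $\sim$ and $\prec$ are disjoint and their union is exactly $\preccurlyeq$, so the splitting assertion is automatic once the domain issue for $\sim$ is resolved.

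The one point that requires a short argument is upgrading $\mathcal{M}_{\sim}$ to all of $\mathcal{M}$, i.e.~showing that $\sim$ is reflexive on the full set $\mathcal{M}$. For this I would simply observe that for any $\alpha\in\mathcal{M}$, reflexivity of $\preccurlyeq$ gives $\alpha \preccurlyeq \alpha$, which by the definition of the symmetric part means $\alpha \sim \alpha$. Hence $\alpha \in \mathcal{M}_{\sim}$, so $\mathcal{M}_{\sim} = \mathcal{M}$, and the equivalence-relation statement therefore holds on all of $\mathcal{M}$.

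I do not anticipate any real obstacle here; the work has already been done in Lemma \ref{lem:Implied equivalence and strict preference}, and the only added ingredient is the observation that reflexivity of the preorder forces every element to be $\sim$-related to itself, ensuring the domain of the equivalence relation is the whole set. The proof can therefore be made very short, essentially a one-sentence reduction to the preceding lemma plus the reflexivity remark.
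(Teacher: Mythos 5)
Your proposal is correct and matches the paper's intent exactly: the corollary is stated as an immediate consequence of Lemma \ref{lem:Implied equivalence and strict preference}, and the only additional observation needed is the one you supply, namely that reflexivity of the preorder gives $\alpha \preccurlyeq \alpha$, hence $\alpha \sim \alpha$, so $\mathcal{M}_{\sim} = \mathcal{M}$ and $\sim$ is an equivalence relation on all of $\mathcal{M}$ while $\prec$ is a strict partial order on $\mathcal{M}$, with $\preccurlyeq \; = \; \sim \cup \prec$ disjointly by Def.~\ref{def:Transitive relations2}.
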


\begin{definition}
If non-empty, $\sim$ is called the implied equivalence relation of the transitive relation $\preccurlyeq$,
and $\prec$, if non-empty, is called its implied strict partial order.
\end{definition}

The following notation can be handy:
\begin{eqnarray}
\label{eq:3lpc}
\succ
& := &
\{(\beta, \alpha) : (\alpha, \beta) \in \; \prec\} .
\end{eqnarray}

\begin{lemma}
\label{lem:eqrelpre}
An equivalence relation on a subset of $\mathcal{M}$ is a preorder on $\mathcal{M}$ with an empty implied strict partial order.
\end{lemma}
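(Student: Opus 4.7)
Let $\mathbf{E}$ be an equivalence relation on a subset $\mathcal{N} \subseteq \mathcal{M}$. By Def.~\ref{def:equivalence relation}, $\mathbf{E}$ is reflexive on $\mathcal{N}$, symmetric, and transitive; however, as a subset of $\mathcal{M}\times\mathcal{M}$ it need not be reflexive on $\mathcal{M}$ unless $\mathcal{N}=\mathcal{M}$. The natural way to read it as a relation on all of $\mathcal{M}$ is therefore via the reflexive closure $\mathbf{E}^r$ of Def.~\ref{def:Reflexive closure}, which adjoins the diagonal pairs $(\alpha,\alpha)$ for $\alpha \in \mathcal{M}\setminus\mathcal{N}$ (nothing new is added on $\mathcal{N}$, where reflexivity already holds).

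I would first verify that $\mathbf{E}^r$ is transitive by a short case split: given $\alpha\,\mathbf{E}^r\,\beta$ and $\beta\,\mathbf{E}^r\,\gamma$, either at least one of the two pairs is a diagonal one, in which case the conclusion $\alpha\,\mathbf{E}^r\,\gamma$ is immediate, or both pairs lie in $\mathbf{E}$, in which case transitivity of $\mathbf{E}$ yields $\alpha\,\mathbf{E}\,\gamma$, hence $\alpha\,\mathbf{E}^r\,\gamma$. Together with reflexivity by construction, this establishes that $\mathbf{E}^r$ is a preorder on $\mathcal{M}$; denote it by $\preccurlyeq$.

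For the claim about the empty strict part, I would observe that $\mathbf{E}^r$ is symmetric: $\mathbf{E}$ itself is symmetric, and each adjoined diagonal pair is trivially its own reverse. Hence whenever $\alpha \preccurlyeq \beta$, also $\beta \preccurlyeq \alpha$, so by Def.~\ref{def:Transitive relations2} every pair of $\preccurlyeq$ lies in its symmetric part $\sim$. Consequently $\prec \,=\, \preccurlyeq \,\setminus \sim \,=\, \emptyset$, i.e.\ the implied strict partial order is empty.

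The only real subtlety is interpretational — namely, recognising that the statement tacitly treats the equivalence relation as extended by the diagonal to all of $\mathcal{M}$; once that is accepted, the argument reduces to the one-line case analysis for transitivity above, together with direct reading of the definitions, so I do not expect any genuine mathematical obstacle.
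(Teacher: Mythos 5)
Your argument is correct; note, however, that the paper offers no proof of this lemma at all (it is stated as immediate from the definitions), so there is no written argument of the author's to compare against, and your write-up is a legitimate filling-in of the details. The interpretational point you raise is real and worth making explicit: since reflexivity in Def.~\ref{def:relations} is quantified over all of $\mathcal{M}$, an equivalence relation $\mathbf{E}$ on a proper subset $\mathcal{N}\subsetneq\mathcal{M}$ is, taken literally, only a symmetric transitive relation on $\mathcal{M}$ (and a preorder on $\mathcal{N}$, in the spirit of Lemma \ref{lem:Implied equivalence and strict preference}); to obtain a preorder on $\mathcal{M}$ one must adjoin the diagonal, exactly as you do. Your repair via the reflexive closure is consistent with the paper's own machinery: since $\mathbf{E}$ is already transitive, $\mathbf{E}^r=\preccurlyeq^r_{\mathbf{E}}$, which by the remark following Def.~\ref{def:Reflexive closure} is the smallest preorder containing $\mathbf{E}$ — so you could have cited that remark in place of the hand-done case split for transitivity, although your case analysis is also correct. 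Two further small observations: the substantive content of the lemma, namely the emptiness of the implied strict partial order, holds for $\mathbf{E}$ itself and needs no closure at all — symmetry puts every pair of $\mathbf{E}$ into the symmetric part of Def.~\ref{def:Transitive relations2}, so the asymmetric part is empty, and adjoining diagonal pairs cannot create asymmetric pairs; and your final conclusion $\prec\,=\,\emptyset$ for $\mathbf{E}^r$ is exactly this observation. In short: no mathematical gap, and the only deviation from the paper is the (necessary, and explicitly acknowledged) reflexive-closure reading of the statement.
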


\begin{lemma}
\label{lem:cor3pom}
For any transitive relation $\preccurlyeq$, the implied strict partial order $\prec$ is a strict partial order on the
equivalence classes  of the implied equivalence relation, i.e.~on $\preccurlyeq / \sim$, in the sense that for
any $\alpha \sim \beta$ and $\gamma \sim \delta$,  $\alpha \prec \gamma$ implies $\beta \prec \delta$.
\end{lemma}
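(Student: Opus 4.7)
The statement asserts that $\prec$ descends to a well-defined relation on the quotient $\preccurlyeq / \sim$: replacing either endpoint of a $\prec$-related pair by a $\sim$-equivalent representative preserves $\prec$. The plan is therefore to fix $\alpha \sim \beta$, $\gamma \sim \delta$, and $\alpha \prec \gamma$, and to derive $\beta \prec \delta$ by a two-step chaining argument that threads the given pair $(\alpha, \gamma)$ between $\beta$ and $\delta$ using the mixed-transitivity rules already collected in Lemma \ref{lem:1csv}.1.

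Concretely, I would proceed as follows. First, by the symmetry of $\sim$ (established as part of Lemma \ref{lem:Implied equivalence and strict preference}), the assumption $\alpha \sim \beta$ rewrites as $\beta \sim \alpha$. Combining $\beta \sim \alpha$ with $\alpha \prec \gamma$, case (b) of Lemma \ref{lem:1csv}.1 (a $\sim$-step followed by a $\prec$-step yields a $\prec$-step) delivers $\beta \prec \gamma$. Second, combining $\beta \prec \gamma$ with $\gamma \sim \delta$, case (a) of the same lemma (a $\prec$-step followed by a $\sim$-step again yields a $\prec$-step) delivers $\beta \prec \delta$, which is the desired conclusion.

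There is no substantive obstacle: the content of the claim is entirely encoded in Lemma \ref{lem:1csv}.1, and the proof is essentially a bookkeeping exercise that only requires invoking symmetry of $\sim$ so that the hypotheses of the lemma align correctly on each application. It is worth noting in passing that, together with case (c) of Lemma \ref{lem:1csv}.1 (transitivity of $\prec$) and the asymmetry of $\prec$ from Lemma \ref{lem:Implied equivalence and strict preference}, the well-definedness established here immediately upgrades $\prec$ to a genuine strict partial order on the set of equivalence classes $\preccurlyeq / \sim$, so no further work is needed to justify the final phrasing of the lemma.
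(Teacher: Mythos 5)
Your proof is correct and is exactly the argument the paper intends: the paper's own proof consists of citing Lemma \ref{lem:1csv}, and your two applications of its mixed-transitivity cases (b) and (a), after using symmetry of $\sim$, are just that argument spelled out. No gaps.
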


Essentially, this is Theorem 2.10 in \cite{MaHo}. Note that MacNeille in his Definition 2.4 defines $=$ in the
same manner as I defined $\sim$, but in the case of a preorder. 
From a contemporary perspective, he thus defined equivalence, not equality (which has no definition).

\begin{proof}
Lemma \ref{lem:1csv}.
\end{proof}

\begin{proposition}
\label{prop:1kop}
Let $\preccurlyeq_1$ and $\preccurlyeq_2$ be two transitive relations. 
If $\prec_2 \; \subset \; \prec_1$ and $\sim_1 \; \subset \; \sim_2$, 
then $\preccurlyeq \;  := \; \sim_1 \cup \prec_2$ is a transitive relation
with $\sim \; = \; \sim_1$ and $\prec \; = \; \prec_2$.
If $\preccurlyeq_1$ and $\preccurlyeq_2$ both
have the property \ref{ref}, or \ref{anti}, or simultaneously the properties \ref{ref} and \ref{add}, or the property \ref{PH} of Def.~\ref{def:relations}, 
then so does $\preccurlyeq$.
\end{proposition}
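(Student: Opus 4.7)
The strategy is a case analysis against the two-part definition $\preccurlyeq \; = \; \sim_1 \cup \prec_2$, with Lemma \ref{lem:1csv} doing almost all the work once the right pair of relations is identified in each case.

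First I would verify transitivity. Given $\alpha \preccurlyeq \beta$ and $\beta \preccurlyeq \gamma$, four cases arise depending on whether each link lies in $\sim_1$ or $\prec_2$. The ``pure'' cases ($\sim_1,\sim_1$) and ($\prec_2,\prec_2$) follow directly from transitivity of $\sim_1$ and $\prec_2$ (Lemma \ref{lem:Implied equivalence and strict preference}). The ``mixed'' cases are where the hypothesis $\sim_1 \subset \sim_2$ enters: a link in $\sim_1$ may be promoted to a link in $\sim_2$, so the mixed pair lives entirely inside $\preccurlyeq_2$, and Lemma \ref{lem:1csv}(1)(a)-(b) yields $\alpha \prec_2 \gamma \subset \preccurlyeq$.

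Next I would identify the symmetric and asymmetric parts. For $\sim \; = \; \sim_1$: the inclusion $\sim_1 \subset \; \sim$ is immediate from symmetry of $\sim_1$ and the definition of $\preccurlyeq$. For the reverse, if $\alpha \sim \beta$, both pairs $(\alpha,\beta)$ and $(\beta,\alpha)$ lie in $\sim_1 \cup \prec_2$. Asymmetry of $\prec_2$ rules out the $(\prec_2, \prec_2)$ subcase. The mixed subcase, say $(\alpha,\beta) \in \prec_2$ and $(\beta,\alpha) \in \sim_1$, would give $\alpha \sim_1 \beta \subset \; \sim_2$ together with $\alpha \prec_2 \beta$, contradicting disjointness of $\sim_2$ and $\prec_2$. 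Hence both pairs must lie in $\sim_1$. For $\prec \; = \; \prec_2$: the inclusion $\prec_2 \subset \; \prec$ uses the same contradiction to block $\beta \preccurlyeq \alpha$; the reverse uses that if $(\alpha,\beta) \in \; \sim_1$, symmetry forces $\beta \preccurlyeq \alpha$, contradicting $\alpha \prec \beta$.

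Finally I would handle the inherited optional properties. Reflexivity is immediate since $\alpha \sim_1 \alpha$. Antisymmetry: if $\alpha \preccurlyeq \beta$ and $\beta \preccurlyeq \alpha$, then $\alpha \sim \beta \; = \; \sim_1$, and antisymmetry of $\sim_1$ (from Lemma \ref{lem:Implied equivalence and strict preference}) gives $\alpha = \beta$. Positive homogeneity: each of $\sim_1$ and $\prec_2$ inherits it from its parent by Lemma \ref{lem:Implied equivalence and strict preference}, so scaling any pair in $\sim_1 \cup \prec_2$ stays in $\sim_1 \cup \prec_2$. Additivity (with reflexivity) is again a four-case split on two pairs $(\alpha,\beta)$ and $(\gamma,\delta)$; in the pure cases Lemma \ref{lem:1csv}(2) applied to $\preccurlyeq_1$ or $\preccurlyeq_2$ closes the argument, while in the mixed cases I would once more promote the $\sim_1$ link to a $\sim_2$ link via $\sim_1 \subset \; \sim_2$ and then apply Lemma \ref{lem:1csv}(2) inside $\preccurlyeq_2$ to land in $\prec_2 \subset \; \preccurlyeq$.

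The main obstacle is purely bookkeeping: the mixed cases in the transitivity and additivity arguments are only handled by invoking the inclusion $\sim_1 \subset \; \sim_2$ to move the argument entirely into $\preccurlyeq_2$, so that Lemma \ref{lem:1csv} is applicable. The hypothesis $\prec_2 \subset \; \prec_1$ is not needed for these closure properties, but it is what guarantees that the mixed-case conclusions land in $\prec_2$ rather than merely in $\prec_1$, i.e.\ that the resulting relation is exactly $\sim_1 \cup \prec_2$ with the claimed symmetric and asymmetric parts.
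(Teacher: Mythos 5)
Your proof is correct and follows essentially the same route as the paper: a four-case transitivity check in which the mixed cases are moved into $\preccurlyeq_2$ via $\sim_1 \; \subset \; \sim_2$ and closed with Lemma \ref{lem:1csv}, the same identification of $\sim \; = \; \sim_1$ and $\prec \; = \; \prec_2$ via disjointness, and the same case distinctions for the optional properties. Only your closing remark is slightly off: the mixed-case conclusions land in $\prec_2$ because Lemma \ref{lem:1csv} is applied inside $\preccurlyeq_2$, not because of $\prec_2 \; \subset \; \prec_1$, which in fact is not invoked anywhere in your argument (nor, explicitly, in the paper's).
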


\begin{proof}
For transitivity, distinguish four cases.
Case 1: $\alpha \sim_1 \beta$ and $\beta \sim_1 \gamma$. This implies $\alpha \sim_1 \gamma$.
Case 2: $\alpha \sim_1 \beta$ and $\beta \prec_2 \gamma$. Since also $\alpha \sim_2 \beta$, Lemma \ref{lem:1csv}
implies $\alpha \prec_2 \gamma$.
Case 3: $\alpha \prec_2 \beta$ and $\beta \sim_1 \gamma$.  Since also $\beta \sim_2 \gamma$, Lemma \ref{lem:1csv}
implies $\alpha \prec_2 \gamma$.
Case 4: $\alpha \prec_2 \beta$ and $\beta \prec_2 \gamma$. Lemma \ref{lem:1csv} implies $\alpha \prec_2 \gamma$.
Thus, $\alpha \preccurlyeq \beta$ and $\beta \preccurlyeq \gamma$ implies $\alpha \preccurlyeq \gamma$.
Further, $\sim \; = \; \sim_1$ and $\prec \; = \; \prec_2$ must holds since $\sim_1 \cap \prec_2 \; = \emptyset$.
Optional reflexivity follows for $\preccurlyeq$ since $\sim_1$ is reflexive if $\preccurlyeq_1$ and $\preccurlyeq_2$ are.
Optional antisymmetry of $\preccurlyeq$ is inherited from $\preccurlyeq_1$.
If $\preccurlyeq_1$ and $\preccurlyeq_2$ are additive and reflexive, then a case distinction
shows that Lemma \ref{lem:1csv} and $\sim_1 \; \subset \; \sim_2$ imply these properties for $\preccurlyeq$. 
Lemma \ref{lem:1csv} also takes care of positive homogeneity.
\end{proof}

\begin{proposition}
\label{prop:3oiu}
Let $\preccurlyeq_1 \; \subsetneq \; \preccurlyeq_2$ be two total preorders. 
Then $\prec_2 \; \subsetneq \; \prec_1$ and $\sim_1 \; \subsetneq \; \sim_2$.
\end{proposition}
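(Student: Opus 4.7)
The plan is to handle the two inclusions first and then produce a single witness that establishes both proper containments simultaneously.

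For the inclusion $\prec_2 \subseteq \prec_1$, I would take $(\alpha,\beta)\in\,\prec_2$, so $\alpha\preccurlyeq_2\beta$ and $\beta\not\preccurlyeq_2\alpha$. Since $\preccurlyeq_1\,\subseteq\,\preccurlyeq_2$, the contrapositive gives $\beta\not\preccurlyeq_1\alpha$. Totality of $\preccurlyeq_1$ then forces $\alpha\preccurlyeq_1\beta$, and combining with $\beta\not\preccurlyeq_1\alpha$ yields $\alpha\prec_1\beta$. The inclusion $\sim_1\,\subseteq\,\sim_2$ is immediate from $\preccurlyeq_1\,\subseteq\,\preccurlyeq_2$, since if both $\alpha\preccurlyeq_1\beta$ and $\beta\preccurlyeq_1\alpha$ hold, they also hold for $\preccurlyeq_2$.

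For the strict part of both conclusions, I would use the hypothesis $\preccurlyeq_1\,\subsetneq\,\preccurlyeq_2$ to pick some $(\alpha,\beta)\in\,\preccurlyeq_2\setminus\preccurlyeq_1$. Then $\alpha\not\preccurlyeq_1\beta$, so by totality of $\preccurlyeq_1$ we must have $\beta\preccurlyeq_1\alpha$, which upgrades to $\beta\prec_1\alpha$ (because $\alpha\not\preccurlyeq_1\beta$). On the other hand, $\beta\preccurlyeq_1\alpha$ and $\preccurlyeq_1\,\subseteq\,\preccurlyeq_2$ give $\beta\preccurlyeq_2\alpha$, and together with $\alpha\preccurlyeq_2\beta$ this means $\alpha\sim_2\beta$. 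Thus the pair $(\beta,\alpha)$ witnesses $\beta\prec_1\alpha$ but $\beta\not\prec_2\alpha$, and $(\alpha,\beta)$ witnesses $\alpha\sim_2\beta$ but $\alpha\not\sim_1\beta$. Both strict containments follow from this single pair.

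No step here looks technically hard; the only point that needs care is making sure that totality is invoked for the \emph{smaller} preorder rather than the larger one, since the statement $\prec_2\,\subseteq\,\prec_1$ would fail without this assumption (a non-total $\preccurlyeq_1$ could leave pairs that are strict in $\preccurlyeq_2$ incomparable in $\preccurlyeq_1$). It is also worth noting that the argument is symmetric in spirit: the same witness from $\preccurlyeq_2\setminus\preccurlyeq_1$ simultaneously produces the missing element of $\sim_2\setminus\sim_1$ and the missing element of $\prec_1\setminus\prec_2$, which explains why both proper containments always occur together.
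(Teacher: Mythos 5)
Your proof is correct. The two inclusion halves coincide with the paper's argument (and you are right, and slightly more explicit than the paper, that $\prec_2\,\subset\,\prec_1$ needs totality of the \emph{smaller} preorder: the paper's phrase ``follows from the inclusion'' silently uses that $\beta\not\preccurlyeq_1\alpha$ plus totality of $\preccurlyeq_1$ yields $\alpha\preccurlyeq_1\beta$). Where you genuinely diverge is in the strictness part. The paper argues indirectly: totality gives the identity $\sim_i \;=\; \mathcal{M}\times\mathcal{M}\setminus(\prec_i\cup\,\succ_i)$, so $\prec_1\,=\,\prec_2$ would force $\sim_1\,=\,\sim_2$ and hence $\preccurlyeq_1\,=\,\preccurlyeq_2$, contradicting properness; the strict inclusions then follow from the already-proved weak ones. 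You instead pick a concrete pair $(\alpha,\beta)\in\,\preccurlyeq_2\setminus\preccurlyeq_1$ and show, using totality of $\preccurlyeq_1$, that $(\beta,\alpha)\in\,\prec_1\setminus\prec_2$ while $(\alpha,\beta)\in\,\sim_2\setminus\sim_1$. Both routes are sound and of comparable length; the paper's complementation identity is a reusable structural fact about total preorders, whereas your construction is more informative, since it exhibits a single witness that certifies both proper containments at once and makes transparent exactly how a pair missing from $\preccurlyeq_1$ must be resolved (strictly, in the reverse direction) by the smaller total preorder.
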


\begin{proof}
If $\alpha \preccurlyeq_2 \beta$ and $\beta \not\preccurlyeq_2 \alpha$, then 
$\alpha \preccurlyeq_1 \beta$ and $\beta \not\preccurlyeq_1 \alpha$ follows from the inclusion.
Therefore, $\prec_2 \; \subset \; \prec_1$.
If $\alpha \preccurlyeq_1 \beta$ and $\beta \preccurlyeq_1 \alpha$, then 
$\alpha \preccurlyeq_2 \beta$ and $\beta \preccurlyeq_2 \alpha$ again follows from the inclusion.
Therefore, $\sim_1 \; \subset \; \sim_2$.
Totality implies for $i = 1,2$
\begin{eqnarray}
\label{eq:4cgy}
\sim_i & = & \mathcal{M}\times\mathcal{M}\setminus (\prec_i \cup \succ_i) .
\end{eqnarray}
Hence, $\prec_1 \; = \; \prec_2$ would imply $\sim_1 \; = \; \sim_2$, and thus $\preccurlyeq_1 \; = \; \preccurlyeq_2$, 
which would be a contradiction.
Therefore, with the first part, $\prec_2 \; \subsetneq \; \prec_1$, and with \eqref{eq:4cgy}, $\sim_1 \; \subsetneq \; \sim_2$.
\end{proof}

\begin{corollary}
\label{cor:2xam}
Let $\preccurlyeq_1 \; \subsetneq \; \preccurlyeq_2$ be two total preorders. 
Then $\preccurlyeq \;  := \; \sim_1 \cup \prec_2$ is a preorder.
If $\preccurlyeq_1$ and $\preccurlyeq_2$ both
have the property \ref{anti}, \ref{add}, or \ref{PH} of Def.~\ref{def:relations}, 
then so does $\preccurlyeq$.
\end{corollary}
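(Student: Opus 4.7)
The plan is to derive this as a direct combination of the two immediately preceding results, Prop.~\ref{prop:3oiu} and Prop.~\ref{prop:1kop}.

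First, I would apply Prop.~\ref{prop:3oiu} to the hypothesis $\preccurlyeq_1 \; \subsetneq \; \preccurlyeq_2$, which yields the strict inclusions $\prec_2 \; \subsetneq \; \prec_1$ and $\sim_1 \; \subsetneq \; \sim_2$. In particular, the non-strict inclusions $\prec_2 \; \subset \; \prec_1$ and $\sim_1 \; \subset \; \sim_2$ hold, which are exactly the hypotheses needed to invoke Prop.~\ref{prop:1kop} with the two transitive relations $\preccurlyeq_1$ and $\preccurlyeq_2$.

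Second, Prop.~\ref{prop:1kop} then directly delivers that $\preccurlyeq \; := \; \sim_1 \cup \prec_2$ is a transitive relation with $\sim \; = \; \sim_1$ and $\prec \; = \; \prec_2$. Since both $\preccurlyeq_1$ and $\preccurlyeq_2$ are preorders, they are reflexive, so the reflexivity clause of Prop.~\ref{prop:1kop} applies and $\preccurlyeq$ is reflexive, hence a preorder. This establishes the first claim.

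For the optional properties, I would simply read off the corresponding clauses of Prop.~\ref{prop:1kop}. Antisymmetry transfers because both $\preccurlyeq_1$ and $\preccurlyeq_2$ possess it. Additivity transfers because the hypothesis of Prop.~\ref{prop:1kop} in that case requires reflexivity and additivity simultaneously, and reflexivity is already guaranteed by $\preccurlyeq_1,\preccurlyeq_2$ being preorders. Positive homogeneity transfers immediately by the last clause of Prop.~\ref{prop:1kop}. I do not expect any substantial obstacle here; the statement is essentially a packaging of Prop.~\ref{prop:3oiu} with Prop.~\ref{prop:1kop}, with the only minor point being to note that reflexivity, which is absent from the hypothesis list in the corollary (because it is already implicit in $\preccurlyeq_1,\preccurlyeq_2$ being total preorders), is precisely what makes the additivity clause of Prop.~\ref{prop:1kop} applicable.
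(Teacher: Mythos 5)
Your proposal is correct and follows exactly the paper's own route: the paper proves this corollary by citing Prop.~\ref{prop:3oiu} together with Prop.~\ref{prop:1kop}, which is precisely the combination you spell out. Your additional remark that reflexivity of $\preccurlyeq_1,\preccurlyeq_2$ (as total preorders) is what activates the additivity clause of Prop.~\ref{prop:1kop} is a correct and welcome elaboration of the same argument.
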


\begin{proof}
Prop.~\ref{prop:3oiu} and Prop.~\ref{prop:1kop}.
\end{proof}

\begin{definition}
\label{def:partial order}
A partial order
is an antisymmetric preorder (\ref{T}, \ref{ref}, and \ref{anti} of Def.~\ref{def:relations}).
\end{definition}

\begin{lemma}
\label{lem:8klm}
Let $\preccurlyeq$ be a partial order. Then $\alpha \sim \beta$ if and only if $\alpha = \beta$.
\end{lemma}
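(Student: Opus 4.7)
The plan is to prove both directions directly from the definitions, using only antisymmetry (Def.~\ref{def:relations}, property \ref{anti}) and reflexivity (property \ref{ref}) of the partial order, together with the definition of the symmetric part in Def.~\ref{def:Transitive relations2}.

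For the forward implication, suppose $\alpha \sim \beta$. By \eqref{eq:2aswA}, this means both $\alpha \preccurlyeq \beta$ and $\beta \preccurlyeq \alpha$. Since $\preccurlyeq$ is antisymmetric by Def.~\ref{def:partial order}, the conclusion $\alpha = \beta$ is immediate.

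For the converse, suppose $\alpha = \beta$. By reflexivity of $\preccurlyeq$, $\alpha \preccurlyeq \alpha$ holds, and this single statement supplies both $\alpha \preccurlyeq \beta$ and $\beta \preccurlyeq \alpha$. Hence $(\alpha, \beta) = (\alpha, \alpha) \in \; \sim$ by \eqref{eq:2aswA}.

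There is no genuine obstacle here; the lemma is essentially a direct restatement of antisymmetry together with reflexivity, and the proof is two lines. The only thing to watch is that one must invoke the precise definition of $\sim$ from Def.~\ref{def:Transitive relations2} rather than treating it as an undefined symbol.
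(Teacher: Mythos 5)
Your proof is correct and is exactly the immediate argument the paper has in mind (the paper omits a proof precisely because the lemma follows directly from antisymmetry for one direction and reflexivity plus Def.~\ref{def:Transitive relations2} for the other). Nothing to add.
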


\begin{lemma}
The reflexive closure $(\prec')^r$ of a strict partial order $\prec'$ is a partial order and the smallest preorder that contains $\prec'$.
Hence, any strict partial order can be constructed by removing the reflexive pairs $\{(\alpha,\alpha) : \alpha\in\mathcal{M}\}$
from a partial order.
\end{lemma}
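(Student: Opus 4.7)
The plan is to handle the three assertions in order: first verify that $(\prec')^r$ is a partial order by checking each defining property directly; then identify it as the smallest preorder containing $\prec'$ using the commutativity of reflexive and transitive closure already noted in the text; finally derive the "hence" from Lemma~\ref{lem:Implied equivalence and strict preference} together with Lemma~\ref{lem:8klm}.

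For the first part, reflexivity of $(\prec')^r$ is immediate from Def.~\ref{def:Reflexive closure}. For transitivity, I would case-split on whether the two links $\alpha (\prec')^r \beta$ and $\beta (\prec')^r \gamma$ are genuine $\prec'$-pairs or added diagonal pairs: if both are $\prec'$, apply transitivity of $\prec'$; if either is of the form $(\alpha,\alpha)$ or $(\beta,\beta)$ or $(\gamma,\gamma)$, the chain collapses to the other link. For antisymmetry, suppose $\alpha (\prec')^r \beta$ and $\beta (\prec')^r \alpha$; if either pair is a diagonal pair, then $\alpha = \beta$ at once, and if both are genuine $\prec'$-pairs, asymmetry of $\prec'$ (property \ref{asy} in Def.~\ref{def:strict partial order}) yields a contradiction. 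Thus $(\prec')^r$ satisfies properties \ref{T}, \ref{ref} and \ref{anti}, so it is a partial order in the sense of Def.~\ref{def:partial order}.

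For the second part, recall from the discussion after Def.~\ref{def:Reflexive closure} that for any $\mathbf{S}$ the set $(\mathbf{S}^t)^r = (\mathbf{S}^r)^t = \; \preccurlyeq^r_\mathbf{S}$ is the smallest preorder containing $\mathbf{S}$. Applied to $\mathbf{S} = \; \prec'$, and using that $(\prec')^t = \; \prec'$ since $\prec'$ is already transitive (Prop.~\ref{prop:transitive closure}), the smallest preorder containing $\prec'$ is $(\prec')^r$, which we have just shown is even a partial order.

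For the "hence" part, I would argue in both directions. Given any partial order $\preccurlyeq$, Lemma~\ref{lem:Implied equivalence and strict preference} shows that $\prec$ (its asymmetric part) is a strict partial order, and Lemma~\ref{lem:8klm} tells us that the symmetric part $\sim$ is exactly the diagonal $\{(\alpha,\alpha):\alpha\in\mathcal{M}\}$; hence removing the diagonal from $\preccurlyeq$ produces a strict partial order. Conversely, starting from an arbitrary strict partial order $\prec'$, form the partial order $(\prec')^r$ from the first part and then remove the diagonal to recover precisely $\prec'$ (since $\prec'$ is irreflexive, no genuine $\prec'$-pair coincides with a diagonal pair). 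I expect no real obstacle here; the only subtlety is to keep the bookkeeping tidy between "added" and "original" pairs in the transitivity and antisymmetry cases, and to be explicit that irreflexivity of $\prec'$ is what makes the removal step invertible.
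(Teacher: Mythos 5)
Your proof is correct. The paper states this lemma without proof, and your argument --- direct verification of reflexivity, transitivity, and antisymmetry for $(\prec')^r$, minimality via the commutativity remark following Def.~\ref{def:Reflexive closure} together with the fact that $\prec'$ is already transitively closed, and recovery of $\prec'$ by deleting the diagonal (using irreflexivity, with Lemma~\ref{lem:Implied equivalence and strict preference} and Lemma~\ref{lem:8klm} for the converse direction) --- is exactly the straightforward verification the paper evidently intends.
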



\section{Chain consistency}
\label{sec:Chain consistency}

Below, I introduce the for this work fundamental notion of consistency, or chain consistency, 
between two transitive relations.
Examples of consistency and inconsistency are given in this section, 
and I illustrate why the simple assumption
of disjoint symmetric and asymmetric parts across two transitive relations generally is not sufficient for 
their consistency.

\begin{definition}
\label{def:Consistency}
Two transitive relations $\preccurlyeq_1$ and $\preccurlyeq_2$  
are chain-consistent with one another if there exist no 
$(\gamma_i,\gamma_{i+1})\in (\preccurlyeq_1 \cup \preccurlyeq_2)$ for $i=1,\ldots,n-1$
and for some $n\in\mathbb{N}\setminus\{0,1\}$
such that $\gamma_1 = \gamma_n$ and
at least one $(\gamma_i,\gamma_{i+1})\in\;\prec_1 \cup \prec_2$.
\end{definition}

Chain consistency means
that there is no closed chain made up of $\sim_1$ and $\sim_2$, and at least one of $\prec_1$ or $\prec_2$.
For instance, some $\alpha \preccurlyeq_1 \beta \prec_2 \gamma \preccurlyeq_1 \delta \preccurlyeq_1 \alpha$ or some
$\alpha \sim_1 \beta \prec_2 \gamma \sim_1 \delta \sim_2 \epsilon \prec_1 \alpha$
would constitute a circular chain as in Def.~\ref{def:Consistency}. 
Chain consistency e.g.~prevents that $\alpha \preccurlyeq_1 \beta$ while $\beta \prec_2 \alpha$.

\begin{example}
Consider $\mathbb{R}\times\mathbb{R}$ with $(a_1,a_2) \sim' (b_1,b_2)$ 
if and only if $a_1 - 2a_2 = b_1 -2b_2$.
Moreover, $\prec''$ being defined as Pareto-better, i.e.~$(a_1,a_2) \prec'' (b_1,b_2)$ if and only if
either $a_1 \leq a_2$ or $b_1 < b_2$, or $a_1 < a_2$ and $b_1 \leq b_2$, or both. 
It is easy to check that $\prec''$ is a strict partial order and that $\sim'$ is an equivalence order.
Since $(1,2) \prec'' (5,4) \sim' (1,2)$, the relations $\sim'$ and $\prec''$ are not chain-consistent with one another.
\end{example}

\begin{lemma}
\label{lem:7pom}
For a transitive relation $\preccurlyeq$, the following are chain-consistent with one another:
(1) $\preccurlyeq$ and $\sim$, (2) $\preccurlyeq$ and $\prec$, as well as (3) $\sim$ and $\prec$.
Thus, if two transitive relations are chain-inconsistent, closed chains of the form specified in Def.~\ref{def:Consistency}
will have members from both relations.
\end{lemma}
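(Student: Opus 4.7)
The plan is to reduce all three assertions to the single underlying fact that a transitive relation $\preccurlyeq$ admits no closed $\preccurlyeq$-chain containing a $\prec$-link. Once this is established, I would observe that in each of the three pairings $(\preccurlyeq,\sim)$, $(\preccurlyeq,\prec)$, $(\sim,\prec)$ the union of the two relations is contained in $\preccurlyeq$, while the union of their asymmetric parts equals $\prec$: the asymmetric part of $\sim$ is empty since $\sim$ is symmetric, whereas the asymmetric parts of $\preccurlyeq$ and of $\prec$ both coincide with $\prec$. A putative closed chain violating Def.~\ref{def:Consistency} would therefore translate verbatim into a closed $\preccurlyeq$-chain with at least one $\prec$-link.

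To prove the core fact, I would argue by contradiction. Suppose there is a closed chain $\gamma_1 \preccurlyeq \gamma_2 \preccurlyeq \cdots \preccurlyeq \gamma_n$ with $\gamma_n = \gamma_1$ and some link $\gamma_k \prec \gamma_{k+1}$. Traversing the rest of the cycle past the strict link and using transitivity on the segment $\gamma_{k+1} \preccurlyeq \gamma_{k+2} \preccurlyeq \cdots \preccurlyeq \gamma_n = \gamma_1 \preccurlyeq \cdots \preccurlyeq \gamma_k$ yields $\gamma_{k+1} \preccurlyeq \gamma_k$. Combined with $\gamma_k \preccurlyeq \gamma_{k+1}$ (which holds because $\gamma_k \prec \gamma_{k+1}$ implies $\gamma_k \preccurlyeq \gamma_{k+1}$), Def.~\ref{def:Transitive relations2} forces $\gamma_k \sim \gamma_{k+1}$, contradicting $\gamma_k \prec \gamma_{k+1}$ since $\sim$ and $\prec$ are disjoint by \eqref{eq:2asw}.

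Putting these two steps together closes cases (1), (2) and (3) simultaneously. The concluding sentence of the lemma follows immediately: if $\preccurlyeq_1$ and $\preccurlyeq_2$ are chain-inconsistent, then a witnessing closed chain cannot consist solely of links from one of them, since by what has just been shown neither $\preccurlyeq$, nor $\sim$, nor $\prec$ considered alone admits such a chain; so the chain must draw links from both relations.

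No step is genuinely difficult here; the only place demanding care is the cyclic traversal in the core fact, where the indices of the complementary segment have to be written down correctly so that transitivity really yields $\gamma_{k+1} \preccurlyeq \gamma_k$ rather than a weaker statement. Everything else is bookkeeping on which pairs belong to $\sim$, to $\prec$, or to $\preccurlyeq$.
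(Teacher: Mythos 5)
Your proposal is correct and takes essentially the same route as the paper: in all three pairings the offending closed chain lies entirely in $\preccurlyeq$ with at least one link from $\prec$, and closing the cycle yields a contradiction. The only cosmetic difference is that the paper propagates strictness around the cycle via Lemma~\ref{lem:1csv} and contradicts the irreflexivity of $\prec$ (obtaining $\gamma_1 \prec \gamma_1$), whereas you apply transitivity to the complementary segment to get $\gamma_{k+1} \preccurlyeq \gamma_k$ and contradict the disjointness of $\sim$ and $\prec$; both arguments rest on the same observation.
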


\begin{proof}
For these pairs of relations, any closed chain as in Def.~\ref{def:Consistency} would by Lemma \ref{lem:1csv} imply $\gamma_1 \prec \gamma_1$,
thus contradicting the irreflexivity of $\prec$.  
\end{proof}

\begin{example}
\label{ex:1D}
For a linear space $\mathcal{M}$, consider a real linear map $f: \mathcal{V} \rightarrow \mathbb{R}$ on a subspace
$\mathcal{V} \subset \mathcal{M}$ which is not identical zero.
For $\alpha,\beta\in \mathcal{M}$, define $\alpha \preccurlyeq_f \beta$ whenever $f(\alpha) \leq f(\beta)$.
It is now easy to check that $\preccurlyeq_f$ is a vector preorder on $\mathcal{M}$,
and that $\alpha \sim_f \beta$ if and only if $\alpha-\beta\in \text{Ker}(f)$, and, thus, $\alpha \prec_f \beta$ if and only if $f(\alpha) < f(\beta)$.
By Lemma \ref{lem:7pom}, $\sim_f$ and $\prec_f$ are consistent with one another.
\end{example}

\begin{proposition}
\label{prop:ConsistencyII}
If $\preccurlyeq_1$ and $\preccurlyeq_2$ are chain-consistent with one another, then
\begin{eqnarray}
\label{eq:7uzn}
(\sim_1 \cup \sim_2) \; \cap \; (\prec_1 \cup \prec_2)
& = & 
\emptyset .
\end{eqnarray}
However, in general, \eqref{eq:7uzn} does not imply chain consistency of $\preccurlyeq_1$ and $\preccurlyeq_2$.
\end{proposition}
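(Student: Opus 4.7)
The proof of Proposition \ref{prop:ConsistencyII} has two parts, and I would handle each separately.

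For the first part, my plan is to prove the contrapositive: if \eqref{eq:7uzn} fails, then $\preccurlyeq_1$ and $\preccurlyeq_2$ are chain-inconsistent. So suppose there exists $(\alpha, \beta) \in (\sim_1 \cup \sim_2) \cap (\prec_1 \cup \prec_2)$. Without loss of generality, there are indices $i, j \in \{1, 2\}$ with $\alpha \sim_i \beta$ and $\alpha \prec_j \beta$. The symmetry of $\sim_i$ yields $\beta \preccurlyeq_i \alpha$, so the triple $\gamma_1 = \alpha$, $\gamma_2 = \beta$, $\gamma_3 = \alpha$ realizes a closed chain (with $n = 3 \in \mathbb{N}\setminus\{0,1\}$) whose links $(\alpha,\beta) \in \;\prec_j$ and $(\beta,\alpha) \in \;\preccurlyeq_i$ both lie in $\preccurlyeq_1 \cup \preccurlyeq_2$, and the first of which lies in $\prec_1 \cup \prec_2$. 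This is exactly the chain forbidden by Def.~\ref{def:Consistency}, contradicting chain consistency.

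For the second part, I would exhibit a small counterexample. Take $\mathcal{M} = \{a, b\}$ and set $\preccurlyeq_1 \;= \{(a, b)\}$ and $\preccurlyeq_2 \;= \{(b, a)\}$. Each of these singleton relations is vacuously transitive, since there are no two composable pairs. Their symmetric parts are empty (neither contains a reverse pair), so $\sim_1 \;=\; \sim_2 \;= \emptyset$ and $\prec_1 \;= \{(a,b)\}$, $\prec_2 \;= \{(b,a)\}$. Consequently, $(\sim_1 \cup \sim_2) \cap (\prec_1 \cup \prec_2) = \emptyset$, so condition \eqref{eq:7uzn} holds trivially. Yet $\gamma_1 = a$, $\gamma_2 = b$, $\gamma_3 = a$ forms a closed chain with $(a,b) \in \;\prec_1$ and $(b,a) \in \;\prec_2$, both of which lie in $\prec_1 \cup \prec_2$, witnessing chain-inconsistency.

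I do not expect any real obstacle here. The only points to watch are that the chain in Def.~\ref{def:Consistency} requires $n \geq 2$ (and my chain for the forward direction uses $n = 3$, so this is fine), and that the counterexample must genuinely be two \emph{transitive} relations with disjoint symmetric parts from the other's asymmetric parts — both guaranteed vacuously because each relation has only one element and empty symmetric part.
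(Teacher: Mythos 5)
Your proof is correct. The first half is essentially the paper's argument: from a pair in the offending intersection you close a length-two chain using the symmetry of $\sim_i$, exactly as the paper does. Where you diverge is the counterexample. You take $\mathcal{M}=\{a,b\}$ with $\preccurlyeq_1\,=\{(a,b)\}$ and $\preccurlyeq_2\,=\{(b,a)\}$, two single-pair strict orders that directly oppose each other; this is valid (both are vacuously transitive, both symmetric parts are empty, so \eqref{eq:7uzn} holds trivially, yet $a\prec_1 b\prec_2 a$ closes a forbidden chain), and it cleanly exposes that \eqref{eq:7uzn} never inspects conflicts between the two asymmetric parts. The paper instead builds a four-element example in which $\preccurlyeq_1$ is an equivalence relation and $\preccurlyeq_2$ a strict partial order, so that every possible two-step conflict between a symmetric and an asymmetric part is already excluded by \eqref{eq:7uzn}, and the inconsistency only appears through a genuine circular chain of length four ($\gamma_1\prec_2\gamma_3\sim_1\gamma_2\prec_2\gamma_4\sim_1\gamma_1$). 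That choice is tied to the remark following the proposition (and to Corollary \ref{cor:6zrn}): it shows that even for the structurally nicest case, an equivalence relation and a strict partial order with disjoint parts, pairwise disjointness does not make the union transitive. Your example proves the literal statement more cheaply; the paper's proves the sharper, more instructive failure. If you wanted your counterexample to carry the same weight, you would need one, like the paper's, in which one relation is purely symmetric and the other purely asymmetric.
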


\begin{proof}
If \eqref{eq:7uzn} is violated, then there either exists $\alpha \sim_1 \beta$ with $\alpha \prec_2 \beta$, or
$\alpha \sim_2 \beta$ with $\alpha \prec_1 \beta$, both leading to chains excluded by Def.~\ref{def:Consistency}.
For the second part, consider distinct $\gamma_i$, $i=1,\ldots,4$, and
\begin{eqnarray}
\label{eq:6mkl}
\preccurlyeq_1
\;\; = \;\;
\sim_1
& := & 
\{(\gamma_i,\gamma_j): (i,j)\in\{ (1,4), (4,1), (2,3), (3,2), \\
\nonumber
& & \qquad\qquad\qquad\qquad\, (1,1), (2,2), (3,3), (4,4) \} \} , \\
\label{eq:7mkl}
\preccurlyeq_2
\;\; = \;\;
\prec_2
& := & 
\{(\gamma_1, \gamma_3), (\gamma_2, \gamma_4)\} . 
\end{eqnarray}
It is easy to check that the first is an equivalence relation and the second a strict partial order, and that 
\eqref{eq:7uzn} holds.
However, $\gamma_1 \prec_2 \gamma_3 \sim_1 \gamma_2 \prec_2 \gamma_4 \sim_1 \gamma_1$.
\end{proof}

So, while for a transitive relation $\preccurlyeq$ it holds that $\sim \cap \prec \; = \emptyset$, the reverse is not generally true, 
i.e.~if $\sim_1 \cap \prec_2 \; = \emptyset$ for an equivalence relation $\sim_1$ and a strict partial order $\prec_2$,
then $\sim_1 \cup \prec_2$ is not necessarily a transitive relation.

I now arrive at a first simple result.
Two transitive relations are chain-consistent with one another if and only if
their transitive closure $\preccurlyeq_{\preccurlyeq_1 \cup \, \preccurlyeq_2}$
does not contradict either of them.

\begin{lemma}
\label{lem:9oim}
Two transitive relations $\preccurlyeq_1$ and $\preccurlyeq_2$ are chain-consistent with one another 
if and only if there exist no $\alpha, \beta \in \mathcal{M}$ with
$\alpha \preccurlyeq_{\preccurlyeq_1 \cup \, \preccurlyeq_2} \beta$
such that 
either $\beta \prec_1 \alpha$, 
or $\beta \prec_2 \alpha$,
or both.
\end{lemma}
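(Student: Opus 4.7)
The plan is to prove the two implications separately, in both cases leveraging Prop.~\ref{prop:transitive closure} to translate between ``chain in $\preccurlyeq_1 \cup \preccurlyeq_2$'' and ``pair in $\preccurlyeq_{\preccurlyeq_1 \cup\, \preccurlyeq_2}$''.

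For the forward implication, I would assume chain consistency and argue by contraposition. Suppose there exist $\alpha, \beta$ with $\alpha \preccurlyeq_{\preccurlyeq_1 \cup\, \preccurlyeq_2} \beta$ such that (WLOG) $\beta \prec_1 \alpha$. By Prop.~\ref{prop:transitive closure} together with Def.~\ref{def:transitive closure}, there is a chain $\gamma_1 = \alpha, \gamma_2, \ldots, \gamma_m = \beta$ with each $(\gamma_i, \gamma_{i+1}) \in \preccurlyeq_1 \cup \preccurlyeq_2$. Appending the link $(\beta, \alpha) \in\; \prec_1$ closes the chain at $\alpha = \gamma_1$ and exhibits at least one link from $\prec_1 \cup \prec_2$, contradicting Def.~\ref{def:Consistency}.

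For the reverse implication, assume the pair condition holds and suppose for contradiction that a closed chain $\gamma_1, \ldots, \gamma_n = \gamma_1$ with $n \geq 2$ exists whose links lie in $\preccurlyeq_1 \cup \preccurlyeq_2$ and with some distinguished link $(\gamma_k, \gamma_{k+1}) \in\; \prec_1 \cup \prec_2$, WLOG in $\prec_1$. The degenerate case $n = 2$ is immediate: it would force $\gamma_1 \prec_1 \gamma_1$, contradicting the irreflexivity implied by the asymmetry of $\prec_1$ (Lemma \ref{lem:Implied equivalence and strict preference}). For $n \geq 3$, the key step is to delete the distinguished link and re-read what remains, namely the sequence $\gamma_{k+1}, \gamma_{k+2}, \ldots, \gamma_n = \gamma_1, \gamma_2, \ldots, \gamma_k$, as a chain from $\gamma_{k+1}$ to $\gamma_k$ with $n - 2 \geq 1$ links, all in $\preccurlyeq_1 \cup \preccurlyeq_2$. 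Prop.~\ref{prop:transitive closure} then gives $\gamma_{k+1} \preccurlyeq_{\preccurlyeq_1 \cup\, \preccurlyeq_2} \gamma_k$, while $\gamma_k \prec_1 \gamma_{k+1}$. Setting $\alpha := \gamma_{k+1}$ and $\beta := \gamma_k$ violates the assumed condition, yielding the desired contradiction.

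The main obstacle, which is really a bookkeeping matter rather than a genuine difficulty, is the cyclic re-indexing in the second direction, together with being careful about the degenerate case $n = 2$ where no ``remaining chain'' is left after deletion; irreflexivity of $\prec_1$ and $\prec_2$ disposes of that case cleanly. Everything else reduces to invoking Prop.~\ref{prop:transitive closure} in each direction to convert between chains and transitive-closure pairs.
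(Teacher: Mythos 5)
Your proposal is correct and follows essentially the same route as the paper: the forward direction unpacks $\alpha \preccurlyeq_{\preccurlyeq_1 \cup \, \preccurlyeq_2} \beta$ into a chain via Prop.~\ref{prop:transitive closure} and closes it with the strict link $\beta \prec_i \alpha$, and the reverse direction distinguishes the strict link $(\gamma_k,\gamma_{k+1})$ of a closed chain and reads the remaining links as a chain witnessing $\gamma_{k+1} \preccurlyeq_{\preccurlyeq_1 \cup \, \preccurlyeq_2} \gamma_k$, exactly as in the paper's choice $\beta = \gamma_i$, $\alpha = \gamma_{i+1}$. Your explicit handling of the degenerate $n=2$ case and the cyclic re-indexing is just a more careful spelling-out of what the paper leaves implicit.
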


\begin{proof}
`$\Rightarrow$': By Def.~\ref{def:transitive closure} (recall Prop.~\ref{prop:transitive closure}), 
if the non-permitted situation of Lemma \ref{lem:9oim} prevails, then
a chain of $\preccurlyeq_1$ and $\preccurlyeq_2$ exists from $\alpha$ to $\beta$, and this chain can be 
closed with $\beta \prec_1 \alpha$ or $\beta \prec_2 \alpha$, such that there is no chain-consistency
according to Def.~\ref{def:Consistency}.

`$\Leftarrow$': If the relations are inconsistent with one another, then there exists a closed chain 
of $\preccurlyeq_1$ and $\preccurlyeq_2$
as not permitted by Def.~\ref{def:Consistency}, where at least one $(\gamma_i,\gamma_{i+1})\in\;\prec_1 \cup \prec_2$.
Setting $\beta = \gamma_i$ and $\alpha = \gamma_{i+1}$ and using Def.~\ref{def:transitive closure}, 
it is now clear that the non-permitted situation of Lemma \ref{lem:9oim}
is established. 
\end{proof}

Obviously, $\sim_1 \cup \sim_2 \, \subset \, \sim_{\preccurlyeq_1 \cup \, \preccurlyeq_2}$ holds
in the situation of Lemma \ref{lem:9oim}.
Moreover, the second condition in Lemma \ref{lem:9oim} can easily be shown to be equivalent to
$\prec_1 \cup \prec_2 \, \subset \, \prec_{\preccurlyeq_1 \cup \, \preccurlyeq_2}$.
This brings me to the definition of consistent extensions of transitive relations in the next section.


\section{Consistent extensions and completions}

This section provides the critical transfinite induction results needed in the proof of my main theorem in the next section.
The notion of a consistent extension and of a consistent completion of a transitive relation is introduced, and a 
technique to extend -- and, consequently, complete -- a non-total (non-complete) transitive relation in a 
consistent manner is explained. 

The following extension of a relation is structure preserving regarding the symmetric and asymmetric parts of the relation.

\begin{definition}
\label{def:12mkl}
Relation $\mathbf{R}_2$ is a consistent extension of relation $\mathbf{R}_1$,
written $\mathbf{R}_1 \sqsubset \mathbf{R}_2$, if $\rsym{R}_1 \subset \rsym{R}_2$ and $\rasy{R}_1 \subset \rasy{R}_2$.
\end{definition}

\begin{lemma}
\label{lem:9klw}
Let $\mathbf{S}$ be an arbitrary non-empty subset of $\mathcal{M}\times \mathcal{M}$. 
Then, $\preccurlyeq_\mathbf{S} \;  \sqsubset \; \preccurlyeq^r_\mathbf{S}$.

\begin{lemma}
\label{lem:8iuz}
If for two transitive relations $\preccurlyeq_1 \; \sqsubset \; \preccurlyeq_2$, then $\preccurlyeq_1$ and $\preccurlyeq_2$ are chain-consistent.
\end{lemma}

\begin{proof}
Because of the inclusions in Def.~\ref{def:12mkl}, chain inconsistency of $\preccurlyeq_1$ and $\preccurlyeq_2$ 
would imply that of $\sim_2$ and $\prec_2$, which is impossible by Lemma \ref{lem:7pom}.
\end{proof}

\begin{lemma}
If $\preccurlyeq_1 \; \sqsubset \; \preccurlyeq_2$ for a preorder $\preccurlyeq_1$ and a transitive relation $\preccurlyeq_2$,
then $\preccurlyeq_2$ is a preorder.
\end{lemma}

\end{lemma}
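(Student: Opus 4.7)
The goal is to verify the two inclusions demanded by Definition~\ref{def:12mkl}: namely $\rsym{\preccurlyeq}_\mathbf{S} \subset \rsym{\preccurlyeq}^{r}_\mathbf{S}$ and $\rasy{\preccurlyeq}_\mathbf{S} \subset \rasy{\preccurlyeq}^{r}_\mathbf{S}$. The plan rests on a single structural observation, stated just before the lemma: the reflexive closure and the transitive closure commute, so
\[
\preccurlyeq^{r}_\mathbf{S} \; = \; \preccurlyeq_\mathbf{S} \,\cup\, \{(\alpha,\alpha) : \alpha\in\mathcal{M}\}.
\]
In other words, passing from $\preccurlyeq_\mathbf{S}$ to $\preccurlyeq^{r}_\mathbf{S}$ only adjoins diagonal pairs. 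This is the fact I would exploit in both halves of the argument.

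For the symmetric part, I would take $(\alpha,\beta)\in \rsym{\preccurlyeq}_\mathbf{S}$, so that both $\alpha \preccurlyeq_\mathbf{S} \beta$ and $\beta \preccurlyeq_\mathbf{S} \alpha$. Since $\preccurlyeq_\mathbf{S} \,\subset\, \preccurlyeq^{r}_\mathbf{S}$, the same two inequalities hold under $\preccurlyeq^{r}_\mathbf{S}$, so $(\alpha,\beta)\in \rsym{\preccurlyeq}^{r}_\mathbf{S}$. This step is immediate from Definition~\ref{def:Transitive relations2}.

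For the asymmetric part, I would take $(\alpha,\beta)\in \rasy{\preccurlyeq}_\mathbf{S}$, so that $\alpha \preccurlyeq_\mathbf{S} \beta$ while $\beta \not\preccurlyeq_\mathbf{S} \alpha$. First I would observe that $\alpha\neq\beta$: otherwise $\alpha\preccurlyeq_\mathbf{S}\beta$ and $\beta\preccurlyeq_\mathbf{S}\alpha$ would be the same statement and $(\alpha,\beta)$ would sit in the symmetric part, contradicting our assumption. Then $\alpha\preccurlyeq^{r}_\mathbf{S}\beta$ is inherited from $\preccurlyeq_\mathbf{S}$. It remains to rule out $\beta\preccurlyeq^{r}_\mathbf{S}\alpha$; by the displayed decomposition this would force either $\beta=\alpha$ (excluded) or $\beta\preccurlyeq_\mathbf{S}\alpha$ (the direct contradiction). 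Hence $(\alpha,\beta)\in \rasy{\preccurlyeq}^{r}_\mathbf{S}$.

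The whole argument is essentially bookkeeping, and there is no genuine obstacle: the only point requiring a moment's care is the asymmetric-part step, where one must separately dispose of the degenerate diagonal case $\alpha=\beta$ before appealing to the containment $\preccurlyeq_\mathbf{S}\,\subset\,\preccurlyeq^{r}_\mathbf{S}$. Note that the same reasoning gives more generally that any relation that extends $\preccurlyeq_\mathbf{S}$ by adjoining only diagonal pairs is a consistent extension, which clarifies why the reflexive closure operation cannot disturb the symmetric/asymmetric decomposition.
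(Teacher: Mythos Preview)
Your argument is correct. The paper states Lemma~\ref{lem:9klw} without proof, so there is nothing to compare against; your proof supplies exactly the routine verification the author left implicit, and it leans on precisely the structural fact the paper highlights just beforehand, namely $(\mathbf{S}^r)^t = (\mathbf{S}^t)^r$, so that $\preccurlyeq^{r}_\mathbf{S}$ differs from $\preccurlyeq_\mathbf{S}$ only on the diagonal.
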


\begin{proposition}
\label{prop:Consistent order of transitive relations}
$\sqsubset$ is a partial order on all relations on $\mathcal{M}$ which share the same subset of properties from Def.~\ref{def:relations}.
Any totally $\sqsubset$-ordered subset $\mathcal{R}$ of these relations on $\mathcal{M}$ has the upper bound 
$\mathbf{B} = \bigcup_{\mathbf{R}\in \mathcal{R}} \mathbf{R}$,
which also has the properties. Moreover,
$\rsym{B} = \bigcup_{\mathbf{R}\in \mathcal{R}} \rsym{R}$
and
$\rasy{B} = \bigcup_{\mathbf{R}\in \mathcal{R}} \rasy{R}$.
\end{proposition}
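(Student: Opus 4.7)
The plan is to address the three parts of the claim separately: that $\sqsubset$ is a partial order on relations sharing the fixed subset of properties, that the union $\mathbf{B}$ of a $\sqsubset$-chain inherits those properties, and the identities $\rsym{B} = \bigcup_{\mathbf{R}\in\mathcal{R}} \rsym{R}$ and $\rasy{B} = \bigcup_{\mathbf{R}\in\mathcal{R}} \rasy{R}$. A preliminary observation, needed throughout, is that $\mathbf{R}_1 \sqsubset \mathbf{R}_2$ implies the ordinary set inclusion $\mathbf{R}_1 \subset \mathbf{R}_2$, which is immediate from the decomposition $\mathbf{R} = \rsym{R} \cup \rasy{R}$.

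For the partial-order claim, reflexivity and transitivity of $\sqsubset$ follow at once from the corresponding properties of set inclusion applied separately to $\rsym{R}$ and $\rasy{R}$. Antisymmetry uses the same decomposition: $\mathbf{R}_1 \sqsubset \mathbf{R}_2 \sqsubset \mathbf{R}_1$ forces $\rsym{R}_1 = \rsym{R}_2$ and $\rasy{R}_1 = \rasy{R}_2$, and hence $\mathbf{R}_1 = \mathbf{R}_2$.

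Next I would establish the identities for the symmetric and asymmetric parts of $\mathbf{B}$, because preservation of properties rests on them. The containment $\bigcup \rsym{R} \subset \rsym{B}$ is direct: any $(\alpha,\beta) \in \rsym{R}$ places both $(\alpha,\beta)$ and $(\beta,\alpha)$ into $\mathbf{R} \subset \mathbf{B}$. Conversely, if $(\alpha,\beta) \in \rsym{B}$, then $(\alpha,\beta) \in \mathbf{R}_i$ and $(\beta,\alpha) \in \mathbf{R}_j$ for some chain members; by totality of $\sqsubset$ one of $\mathbf{R}_i, \mathbf{R}_j$ contains the other, so both pairs lie in the larger relation and together witness $(\alpha,\beta) \in \rsym{R}$ there. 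The inclusion $\rasy{B} \subset \bigcup \rasy{R}$ is equally immediate: if $(\alpha,\beta) \in \rasy{B}$, pick $\mathbf{R}$ with $(\alpha,\beta) \in \mathbf{R}$; since $(\beta,\alpha) \notin \mathbf{B}$ we have $(\beta,\alpha) \notin \mathbf{R}$, so $(\alpha,\beta) \in \rasy{R}$. The subtle direction is $\bigcup \rasy{R} \subset \rasy{B}$: given $(\alpha,\beta) \in \rasy{R}$, I must exclude $(\beta,\alpha) \in \mathbf{B}$. Any $\mathbf{R}' \in \mathcal{R}$ containing $(\beta,\alpha)$ must either satisfy $\mathbf{R} \sqsubset \mathbf{R}'$, so $(\alpha,\beta) \in \rasy{R}'$ forces $(\beta,\alpha) \notin \mathbf{R}'$, a contradiction; or $\mathbf{R}' \sqsubset \mathbf{R}$, so $(\beta,\alpha) \in \mathbf{R}' \subset \mathbf{R}$, again contradicting $(\alpha,\beta) \in \rasy{R}$.

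Finally, for the preservation of the chosen properties by $\mathbf{B}$, the key observation is that every property in Definition \ref{def:relations} is finitary, being quantified over at most three pairs. Because $\sqsubset$ implies set inclusion and $\mathcal{R}$ is totally ordered by $\sqsubset$, any finite list of pairs in $\mathbf{B}$ already lies inside a single $\mathbf{R} \in \mathcal{R}$, namely the largest of the finitely many chain members that contain them. Positive implications -- transitivity, symmetry, reflexivity, additivity, positive homogeneity -- therefore transfer to $\mathbf{B}$ by applying the property inside that one $\mathbf{R}$. For the negative properties (asymmetry, irreflexivity, antisymmetry), the cleanest route combines the identity $\rsym{B} = \bigcup \rsym{R}$ with the hypothesis on $\mathcal{R}$: if every $\mathbf{R}$ is asymmetric, then $\rsym{R} = \emptyset$ for all $\mathbf{R}$, hence $\rsym{B} = \emptyset$, yielding asymmetry of $\mathbf{B}$; antisymmetry and irreflexivity of $\mathbf{B}$ follow in the same vein. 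The main obstacle will be the asymmetric-part identity $\rasy{B} = \bigcup \rasy{R}$, because its nontrivial direction is the only place where the full strength of both the totality of $\sqsubset$ on $\mathcal{R}$ and the negative nature of membership in $\rasy{R}$ must be combined; once it is in place, the remainder assembles routinely.
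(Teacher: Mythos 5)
Your proposal is correct and follows essentially the same route as the paper's proof: verify the partial-order axioms directly, use the total $\sqsubset$-ordering of the chain to identify $\rsym{B}$ and $\rasy{B}$ with the unions of the parts, and transfer each property to $\mathbf{B}$ by locating the finitely many relevant pairs inside a single chain member (with the negative properties handled via the emptiness, or diagonality, of $\rsym{B}$). Your element-by-element treatment of the inclusion $\bigcup_{\mathbf{R}\in\mathcal{R}}\rasy{R}\subset\rasy{B}$ is in fact slightly more explicit than the paper's corresponding step, but the underlying idea is the same.
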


\begin{proof}
Clearly, $\sqsubset$ is reflexive, antisymmetric, and transitive, and therefore a partial order on all
relations on $\mathcal{M}$ that share the same set of properties from Def.~\ref{def:relations}.
Clearly, $\mathbf{B} = (\bigcup_{\mathbf{R}\in \mathcal{R}} \rsym{R}) \cup (\bigcup_{\mathbf{R}\in \mathcal{R}} \rasy{R})$, 
and $\bigcup_{\mathbf{R}\in \mathcal{R}} \rsym{R}$ is symmetric. 
$\bigcup_{\mathbf{R}\in \mathcal{R}} \rasy{R}$ is asymmetric, because, by construction, it cannot have elements of the form
$(\alpha, \alpha)$, and if for $\alpha \neq \beta$ it held that $(\alpha, \beta), (\beta, \alpha)\in \bigcup_{\mathbf{R}\in \mathcal{R}} \rasy{R}$,
then, given the total ordering, there would have to be one member $\rasy{R}$ containing both of them, which cannot be.
Thus, $\rsym{B}$ and $\rasy{B}$ are as stated, and $\mathbf{R} \sqsubset \mathbf{B}$ for any $\mathbf{R}\in\mathcal{R}$.
Regarding the properties, consider first transitivity.
$\alpha \mathbf{R} \beta$ and $\beta \mathbf{R} \gamma$ means that
there exist $\mathbf{R}_1, \mathbf{R}_2\in\mathcal{R}$ such that $\alpha \mathbf{R}_1 \beta$ and $\beta \mathbf{R}_2 \gamma$.
If I assume without loss of generality that $\mathbf{R}_1 \sqsubset \mathbf{R}_2$, then also
$\alpha \mathbf{R}_2 \beta$, and therefore $\alpha \mathbf{R}_2 \gamma$ by transitivity of $\mathbf{R}_2$, 
which in turn implies $\alpha \mathbf{R} \gamma$. Thus, $\mathbf{R}$ is a transitive relation.
With similar or even simpler arguments, the remaining properties \ref{sym} to \ref{To}
of Def.~\ref{def:relations} can be checked to hold for $\mathbf{B}$ if they hold for each member of $\mathcal{R}$.
For instance, for property \ref{asy} (asymmetry), if all members of $\mathcal{R}$ have the property, then $\mathbf{B} = \rasy{B}$,
so $\mathbf{B}$ has the property, as well.
\end{proof}

In the following, $\preccurlyeq^{(add,ph)}$ means that $\preccurlyeq$ applies in cases where only transitivity is considered,
and $\preccurlyeq^{add,ph}$ applies in cases where, additionally, additivity and positively homogeneity are considered.

\begin{proposition}
\label{prop:Minimal consistent extensions of transitive relations}
For $\alpha \neq \beta$, $\alpha, \beta \in \mathcal{M}$, assume that 
neither $\alpha \preccurlyeq \beta$, nor $\beta \preccurlyeq \alpha$,
for a given transitive relation $\preccurlyeq$.
\begin{enumerate}
\item
If $\preccurlyeq$ is a transitive relation, preorder, or strict partial order on the set $\mathcal{M}$, then
\begin{eqnarray}
\label{eq:3zgh}
\preccurlyeq
& \sqsubset &
\preccurlyeq_{\preccurlyeq\cup\{(\alpha,\beta)\}} 
\;\;\,
=
\;\;
\left(
\preccurlyeq
\;
\cup
\;\,
\{(\gamma, \delta) : \gamma\preccurlyeq\alpha \text{ or } \gamma=\alpha , \text{ and } \beta\preccurlyeq\delta \text{ or } \beta = \delta\} \right) , \qquad
\end{eqnarray}
and the transitive closure $\preccurlyeq_{\preccurlyeq\cup\{(\alpha,\beta)\}}$ is again
a transitive relation, preorder, or strict partial order.
\item
If $\preccurlyeq$ is a vector preorder on the real linear space $\mathcal{M}$, then
\begin{eqnarray}
\label{eq:3zghB}
\preccurlyeq
& \sqsubset &
\preccurlyeq^{add,ph}_{\preccurlyeq\cup\{(\alpha,\beta)\}} 
\;\;\,
=
\;\;
\left(
\preccurlyeq
\;
\cup
\;\,
\{(r\gamma + \epsilon, r\delta+\zeta) : \gamma\preccurlyeq\alpha, \beta\preccurlyeq\delta, 
\epsilon\preccurlyeq\zeta, r\in\mathbb{R}_{>0}^+\} \right) , \qquad
\end{eqnarray}
where $\preccurlyeq^{add,ph}_{\preccurlyeq\cup\{(\alpha,\beta)\}}$ is a vector preorder.
\end{enumerate}
In both cases, $\preccurlyeq \;\, \neq \;\, \preccurlyeq^{(add,ph)}_{\preccurlyeq\cup\{(\alpha,\beta)\}}$, and
\begin{eqnarray}
\label{eq:4uzgh}
\alpha
& \prec^{(add,ph)}_{\preccurlyeq\cup\{(\alpha,\beta)\}} &
\beta ,
\end{eqnarray}
as well as
\begin{eqnarray}
\label{eq:4Auzgh}
\sim^{(add,ph)}_{\preccurlyeq\cup\{(\alpha,\beta)\}}
& = &
\sim .
\end{eqnarray}
\end{proposition}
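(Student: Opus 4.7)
The plan is to handle Parts 1 and 2 separately, with both arguments hinging on the hypothesis that neither $\alpha \preccurlyeq \beta$ nor $\beta \preccurlyeq \alpha$, together with $\alpha \neq \beta$. The recurring motif in both parts is that every potential contradiction to the asserted consistency properties collapses into a forced instance of the forbidden relation $\beta \preccurlyeq \alpha$.

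For Part 1, set $\preccurlyeq' \; := \; \preccurlyeq_{\preccurlyeq \cup \{(\alpha, \beta)\}}$. By Prop.~\ref{prop:transitive closure}, a pair lies in $\preccurlyeq'$ iff it arises from a finite chain in $\preccurlyeq \cup \{(\alpha, \beta)\}$. The link $(\alpha, \beta)$ can appear in such a chain at most once, because two occurrences would demand an intermediate $\preccurlyeq$-segment from $\beta$ to $\alpha$, forcing the forbidden $\beta \preccurlyeq \alpha$. Chains with no occurrence of $(\alpha,\beta)$ collapse inside $\preccurlyeq$, while chains with exactly one occurrence decompose as $\gamma \preccurlyeq \ldots \preccurlyeq \alpha$, then $(\alpha,\beta)$, then $\beta \preccurlyeq \ldots \preccurlyeq \delta$ (possibly with $\gamma = \alpha$ or $\beta = \delta$), producing exactly the additional pairs listed in \eqref{eq:3zgh}. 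Reflexivity is automatic from $\preccurlyeq \; \subset \; \preccurlyeq'$; for strict partial orders, irreflexivity and asymmetry are preserved because any new $(\gamma,\gamma)$, or any new reverse pair $(\delta,\gamma)$ to an existing $(\gamma,\delta)$, would again reduce to $\beta \preccurlyeq \alpha$ by a case split on the equalities. The consistent-extension claim $\preccurlyeq \; \sqsubset \; \preccurlyeq'$, the strictness $\alpha \prec' \beta$, and the identity $\sim' \; = \; \sim$ are then variations on the same trick: $\sim \; \subset \; \sim'$ is immediate, and every remaining inclusion is a contradiction in which a hypothetical reverse pair in $\preccurlyeq'$ must come from the new part and so chain back to $\beta \preccurlyeq \alpha$.

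For Part 2, set $\preccurlyeq' \; := \; \preccurlyeq^{add,ph}_{\preccurlyeq \cup \{(\alpha, \beta)\}}$. The main obstacle is showing that the right-hand side of \eqref{eq:3zghB} is a vector preorder in the first place. I would first rewrite it as the simpler set $\preccurlyeq \; \cup \; \{(r\alpha + \epsilon, r\beta + \zeta) : r > 0,\; \epsilon \preccurlyeq \zeta\}$, by absorbing the differences $r(\alpha - \gamma)$ and $r(\delta - \beta)$ into the $\epsilon,\zeta$ components; this absorption is legitimate because the real linear structure combined with reflexivity and additivity of $\preccurlyeq$ permits subtraction, so that $\gamma \preccurlyeq \delta$ is equivalent to $0 \preccurlyeq \delta - \gamma$. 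Reflexivity, additivity and positive homogeneity of the simplified set are then mechanical. The delicate point is transitivity: composing two new-part pairs $(r_1 \alpha + \epsilon_1, r_1 \beta + \zeta_1)$ and $(r_2 \alpha + \epsilon_2, r_2 \beta + \zeta_2)$ with matching midpoint $r_1 \beta + \zeta_1 = r_2 \alpha + \epsilon_2$, I would set $r := r_1 + r_2$ and recast the composite as $(r\alpha + (\epsilon_1 - r_2 \alpha), r\beta + (\zeta_2 - r_1 \beta))$; the remaining inequality $\epsilon_1 + r_1 \beta \preccurlyeq r_2 \alpha + \zeta_2$ then follows from $\epsilon_1 \preccurlyeq \zeta_1$, the midpoint equation, and $\epsilon_2 \preccurlyeq \zeta_2$. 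The mixed cases, in which one of the composed pairs comes from $\preccurlyeq$, reduce by the same subtraction trick. Minimality of the RHS is immediate, since any vector preorder that contains both $\preccurlyeq$ and $(\alpha,\beta)$ must, by transitivity with additivity and positive homogeneity, contain every chain $r\gamma + \epsilon \preccurlyeq r\alpha + \epsilon \preccurlyeq' r\beta + \epsilon \preccurlyeq r\delta + \zeta$.

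Finally, the three distinguished assertions in Part 2 are established by the same collapse-to-$\beta \preccurlyeq \alpha$ strategy applied to the simplified form. For $\alpha \prec' \beta$, supposing $\beta \preccurlyeq' \alpha$ through the new part gives $\beta = r\alpha + \epsilon$ and $\alpha = r\beta + \zeta$ with $\epsilon \preccurlyeq \zeta$, from which $\zeta - \epsilon = (1 + r)(\alpha - \beta)$ and $0 \preccurlyeq \zeta - \epsilon$ yield $\beta \preccurlyeq \alpha$ after dividing by $1 + r > 0$. The identity \eqref{eq:4Auzgh} is handled analogously: assuming both $\alpha \preccurlyeq' \beta$ and $\beta \preccurlyeq' \alpha$ via the new part, the two defining equations, added appropriately, deliver $0 \preccurlyeq (r_1 + r_2)(\alpha - \beta)$, which again forces the forbidden $\beta \preccurlyeq \alpha$; the mixed sub-cases, where one direction lies in $\preccurlyeq$ and the other in the new part, fold into the same contradiction.
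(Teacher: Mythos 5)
Your proposal is correct, and its overall architecture matches the paper's: exhibit the right-hand sides of \eqref{eq:3zgh} and \eqref{eq:3zghB} as the generated relations, then collapse every potential violation of \eqref{eq:4uzgh} and \eqref{eq:4Auzgh} into the forbidden $\beta \preccurlyeq \alpha$. Where you differ is in the execution, in two places worth noting. In Part 1 you compute the transitive closure directly via Prop.~\ref{prop:transitive closure}, by observing that the link $(\alpha,\beta)$ can occur at most once in any chain and decomposing chains accordingly; the paper instead verifies by a four-case analysis that the explicit right-hand side of \eqref{eq:3zgh} is itself transitive and hence coincides with the closure by minimality. The two arguments are interchangeable, yours making the chain structure explicit, the paper's avoiding any talk of chains. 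In Part 2 your normalization of the new pairs to the form $(r\alpha+\epsilon, r\beta+\zeta)$ with $\epsilon \preccurlyeq \zeta$ (absorbing the slack $r(\alpha-\gamma)$ and $r(\delta-\beta)$ into $\epsilon,\zeta$, legitimate by reflexivity and additivity) is a genuine simplification the paper does not make: it shortens the additivity check and makes the final algebra ($o \preccurlyeq (1+r)(\alpha-\beta)$, respectively $o \preccurlyeq (r_1+r_2)(\alpha-\beta)$) cleaner than the paper's computation with general $\gamma,\delta$. On the other hand, your ``delicate'' direct verification of transitivity in Part 2 is redundant: as the paper uses, reflexivity together with additivity already implies transitivity, so once additivity is mechanical the composition argument with matching midpoints can be dropped. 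Finally, you establish $\sim' \; = \; \sim$ by direct algebra in both parts, whereas the paper deduces it from \eqref{eq:4uzgh} via Lemma \ref{lem:1csv}; both are sound, and your route avoids invoking the mixing rules at the cost of a few more case splits. Minor omissions ($\preccurlyeq \; \neq \; \preccurlyeq^{(add,ph)}_{\preccurlyeq\cup\{(\alpha,\beta)\}}$, and spelling out that $\sim' \; = \; \sim$ plus the inclusion yields $\prec \; \subset \; \prec'$ and hence $\sqsubset$) follow immediately from what you prove.
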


This means that the transitive relation, preorder, or strict partial order $\preccurlyeq_{\preccurlyeq\cup\{(\alpha,\beta)\}}$ is 
the smallest one larger than 
$\preccurlyeq$ that also strictly prefers $\beta$ over $\alpha$. 
Moreover, only the strict part of $\preccurlyeq$ is extended by this procedure.
A result in the case of partial orders that is somewhat related to Proposition \ref{prop:Minimal consistent extensions of transitive relations}
is Theorem 1.19 in \cite{RoSt}.

\begin{proof}
Part 1.
Denote the right hand side of \eqref{eq:3zgh} short with $\preccurlyeq'$.
Note that $\alpha \preccurlyeq' \beta$.
Clearly, $\preccurlyeq \; \subset \; \preccurlyeq'$, and $\preccurlyeq'$ is reflexive in the preorder case.
For the second equality in \eqref{eq:3zgh}, it only needs to be shown that the relation on the right,
which is $\preccurlyeq'$, is transitive, since it obviously is contained
in $\preccurlyeq_{\preccurlyeq\cup\{(\alpha,\beta)\}}$.
For transitivity, assume $\gamma_1 \preccurlyeq' \gamma_2$ and $\gamma_2 \preccurlyeq' \gamma_3$.
Case 1: $\gamma_1 \preccurlyeq \gamma_2$ and $\gamma_2 \preccurlyeq \gamma_3$. Then, $\gamma_1 \preccurlyeq \gamma_3$, and thus
$\gamma_1 \preccurlyeq' \gamma_3$.
Case 2: $\gamma_1 \preccurlyeq \gamma_2$, but $\gamma_2 \not\preccurlyeq \gamma_3$. 
Then, $\gamma_1 \preccurlyeq \gamma_2 \preccurlyeq \alpha$ 
or $\gamma_1 \preccurlyeq \gamma_2 = \alpha$,
and $\beta \preccurlyeq \gamma_3$ or $\beta = \gamma_3$, 
and thus, because $\gamma_1 \preccurlyeq \alpha$,
$\gamma_1 \preccurlyeq' \gamma_3$. 
Case 3: $\gamma_1 \not\preccurlyeq \gamma_2$, but $\gamma_2 \preccurlyeq \gamma_3$. 
Then, $\gamma_1 \preccurlyeq \alpha$ or $\gamma_1 = \alpha$,
and $\beta \preccurlyeq \gamma_2 \preccurlyeq \gamma_3$
or $\beta = \gamma_2 \preccurlyeq \gamma_3$, and thus, because $\beta \preccurlyeq \gamma_3$,
$\gamma_1 \preccurlyeq' \gamma_3$.
Case 4: $\gamma_1 \not\preccurlyeq \gamma_2$ and $\gamma_2 \not\preccurlyeq \gamma_3$. 
Then, $\gamma_1 \preccurlyeq \alpha$ or $\gamma_1 = \alpha$,
and $\beta \preccurlyeq \gamma_2$ or $\beta = \gamma_2$, 
as well as $\gamma_2 \preccurlyeq \alpha$ or $\gamma_2 = \alpha$,
and $\beta \preccurlyeq \gamma_3$ or $\beta = \gamma_3$.
This implies $\beta \preccurlyeq \alpha$ or $\beta = \alpha$, which is a contradiction to the assumptions. 
Thus, this case cannot occur.
It is now established that the right hand side of \eqref{eq:3zgh} is a transitive relation,
and a preorder, if $\preccurlyeq$ is one.
Since $\beta \preccurlyeq' \alpha$ would with \eqref{eq:3zgh} either imply $\alpha = \beta$, or imply both, 
$\alpha \preccurlyeq \beta$ and $\beta \preccurlyeq \alpha$, which is a contradiction to the initial assumptions,
$\beta \not\preccurlyeq' \alpha$, and \eqref{eq:4uzgh} holds.
This, in turn, implies via Lemma \ref{lem:1csv} and
the right hand side of \eqref{eq:3zgh} that $\sim' \; = \; \sim$, and \eqref{eq:4Auzgh} holds.
Since $\preccurlyeq \; \subset \; \preccurlyeq'$, $\sim' \; = \; \sim$ implies 
$\prec \; \subset \; \prec'$, and, in conclusion, $\sqsubset$ holds on the left hand side in \eqref{eq:3zgh}.
However, with Lemma \ref{lem:Implied equivalence and strict preference},
this also implies that $\preccurlyeq'$ is a strict partial order if $\preccurlyeq$ is, since then $\sim' \; = \; \sim \; = \, \emptyset$.

Part 2.
Again, denote the right hand side of \eqref{eq:3zghB} short with $\preccurlyeq'$. Reflexivity of the second and third
relation in \eqref{eq:3zghB} is given by the reflexivity of $\preccurlyeq$.
For the second equality in \eqref{eq:3zghB}, it now only needs to be shown that the relation on the right is 
additive, and positively homogeneous, because reflexivity and additivity imply transitivity, and thus a preorder,
and it is easy to see that the relation on the right must be contained in $\preccurlyeq^{add,ph}_{\preccurlyeq\cup\{(\alpha,\beta)\}}$,
which, since in this case $\preccurlyeq$ is reflexive, is the transitive, reflexive, additive, and positively homogeneous relation generated by
$\preccurlyeq\cup\,\{(\alpha,\beta)\}$.
For additivity, first assume 
$r_1 \gamma_1 + \epsilon_1 \preccurlyeq'  r_1 \delta_1 + \zeta_1$
and
$r_2 \gamma_2 + \epsilon_2 \preccurlyeq'  r_2 \delta_2 + \zeta_2$
as in the right hand side of \eqref{eq:3zghB}.
Additivity and positive homogeneity of $\preccurlyeq$ imply
$(\epsilon_1 + \epsilon_2) \preccurlyeq (\zeta_1 + \zeta_2)$, 
$(r_1 \gamma_1 + r_2 \gamma_2)/(r_1+r_2) \preccurlyeq \alpha$, and $\beta \preccurlyeq (r_1 \delta_1 + r_2 \delta_2)/(r_1+r_2)$.
Therefore, \eqref{eq:3zghB} implies
\begin{eqnarray}
\epsilon_1 + \epsilon_2 
+
r_1 \gamma_1 + r_2 \gamma_2
& \preccurlyeq' &
\zeta_1 + \zeta_2
+
r_1 \delta_1 + r_2 \delta_2 .
\end{eqnarray}
Assume now
$r_1 \gamma + \epsilon_1 \preccurlyeq'  r_1 \delta + \zeta_1$
as in the right hand side of \eqref{eq:3zghB}, as well as 
$\epsilon_2 \preccurlyeq \zeta_2$.
Since $\epsilon_1 \preccurlyeq \zeta_1$, and therefore 
$\epsilon_1 + \epsilon_1 \preccurlyeq \zeta_1 + \zeta_2$, 
and since $\gamma \preccurlyeq \alpha$ and $\beta \preccurlyeq \delta$,
\eqref{eq:3zghB} implies
\begin{eqnarray}
\epsilon_1 + \epsilon_2 
+
r_1 \gamma
& \preccurlyeq' &
\zeta_1 + \zeta_2
+
r_1 \delta .
\end{eqnarray}
Therefore, additivity is established. 
The positive homogeneity of the right hand side of \eqref{eq:3zghB} is straightforward to see, thus,
$\preccurlyeq'$ is an additive positively homogeneous preorder.
Assume now $\beta \preccurlyeq' \alpha$. 
According to \eqref{eq:3zghB}, and since $\beta \not\preccurlyeq \alpha$, there exist 
$\gamma\preccurlyeq\alpha$, $\beta\preccurlyeq\delta$, $\epsilon\preccurlyeq\zeta$, and $r\in\mathbb{R}_{>0}^+$
such that
$\beta = r\gamma + \epsilon$
and 
$\alpha =  r\delta+\zeta$.
This implies
$\gamma = (\beta - \epsilon)/r$
and 
$\delta = (\alpha - \zeta)/r$.
Since $\gamma\preccurlyeq\alpha$ and $\beta\preccurlyeq\delta$, 
one obtains
$(\beta - \epsilon)/r\preccurlyeq\alpha$
and
$\beta\preccurlyeq(\alpha - \zeta)/r$.
Multiplying each with $r$ and adding them,
\begin{eqnarray}
(1+r)\beta - \epsilon
& \preccurlyeq &
(1+r)\alpha - \zeta .
\end{eqnarray}
Adding $\epsilon\preccurlyeq\zeta$ and multiplying with $1/(1+r)$, I obtain the contradiction 
$\beta \preccurlyeq \alpha$.
Thus, \eqref{eq:4uzgh} holds.
This, in turn, implies via Lemma \ref{lem:1csv} and
the right hand side of \eqref{eq:3zghB} that $\sim' \; = \; \sim$, and \eqref{eq:4Auzgh} holds.
\end{proof}

\begin{corollary}
\label{cor:4ppm}
If a transitive relation
$\preccurlyeq_1$ 
and a strict partial order 
$\prec_2$ 
are chain-consistent with one another, then
the implied equivalence relation
$\sim_1$ 
and
$\prec_2$ 
are chain-consistent with one another.
The reverse is not necessarily true.
\end{corollary}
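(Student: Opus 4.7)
The forward direction I expect to be a one-line unfolding of definitions, via contraposition. If $\sim_1$ and $\prec_2$ fail to be chain-consistent, Def.~\ref{def:Consistency} produces a closed chain with links in $\sim_1 \cup \prec_2$ and at least one distinguished link in the union of the two asymmetric parts. But the asymmetric part of a symmetric relation is empty, so that distinguished link must sit in $\prec_2$. Since $\sim_1 \subset \preccurlyeq_1$, the same sequence is a closed chain in $\preccurlyeq_1 \cup \prec_2$ whose distinguished link lies in $\prec_1 \cup \prec_2$, contradicting the assumed chain-consistency of $\preccurlyeq_1$ and $\prec_2$.

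For the non-equivalence, I plan to exhibit a minimal explicit counter-example on $\mathcal{M}=\{\alpha,\beta\}$ with $\preccurlyeq_1 = \{(\alpha,\alpha),(\beta,\beta),(\alpha,\beta)\}$ and $\prec_2 = \{(\beta,\alpha)\}$. A quick check shows $\preccurlyeq_1$ is transitive (in fact a preorder) and $\prec_2$ is a strict partial order. The closed chain $\alpha \preccurlyeq_1 \beta \prec_2 \alpha$ witnesses that $\preccurlyeq_1$ and $\prec_2$ are chain-inconsistent. Conversely, $\sim_1$ consists only of the two reflexive pairs, so in $\sim_1 \cup \prec_2$ the single $\prec_2$-link lands the chain at $\alpha$, after which it can only stagnate via $(\alpha,\alpha)$ and never return to $\beta$; hence $\sim_1$ and $\prec_2$ are chain-consistent.

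I do not anticipate a substantial obstacle: the corollary simply reflects that taking the symmetric part of $\preccurlyeq_1$ discards the strict-order information stored in $\prec_1$, which is exactly what allows short cycles through $\prec_1$-links to sabotage chain-consistency with $\prec_2$ while remaining invisible to $\sim_1 \cup \prec_2$. The only care required is to invoke Def.~\ref{def:Consistency} with the correct asymmetric part in each case, using that the asymmetric part of the symmetric relation $\sim_1$ is empty.
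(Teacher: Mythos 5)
Your forward direction is exactly the paper's argument: contrapose, note that any offending closed chain for $\sim_1$ and $\prec_2$ must have its distinguished link in $\prec_2$ (the asymmetric part of $\sim_1$ being empty), and use $\sim_1 \subset \; \preccurlyeq_1$ to read the same chain as a violation of chain-consistency of $\preccurlyeq_1$ and $\prec_2$. For the reverse, your two-element counterexample is correct: $\preccurlyeq_1$ is a preorder, $\prec_2$ a strict partial order, the chain $\alpha \preccurlyeq_1 \beta \prec_2 \alpha$ kills chain-consistency of the pair, while $\sim_1$ consists only of the reflexive pairs, so no closed chain through the single $\prec_2$-link can return to $\beta$. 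The paper instead argues more generally: starting from an arbitrary non-total equivalence relation $\sim$ and incomparable $\alpha,\beta$, it invokes Prop.~\ref{prop:Minimal consistent extensions of transitive relations} to build $\preccurlyeq_1 \; = \; \preccurlyeq_{\sim\cup\{(\alpha,\beta)\}}$ and takes $\prec_2$ from $\preccurlyeq_{\sim\cup\{(\beta,\alpha)\}}$, using \eqref{eq:4Auzgh} and Lemma \ref{lem:7pom} to get consistency of $\sim_1$ with $\prec_2$ and the chain $\alpha\preccurlyeq_1\beta\prec_2\alpha$ to get inconsistency. Your example is in fact the specialization of that construction to the identity relation on two points, so the content is the same; what you gain is a self-contained, immediately checkable witness that needs none of the extension machinery, while the paper's version shows the phenomenon is generic (it occurs over every non-total equivalence relation) and doubles as an illustration of the proposition it cites.
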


\begin{proof}
If $\sim_1$ and $\prec_2$ are not chain-consistent, there is a circular chain as described in Def.~\ref{def:Consistency} for $\sim_1$ and $\prec_2$.
However, since $\sim_1 \; \subset \; \preccurlyeq_1$, this implies a circular chain as in Def.~\ref{def:Consistency} for $\preccurlyeq_1$ and $\prec_2$.
To see that the reverse is not true, use for a non-total equivalence relation, $\sim$,
Prop.~\ref{prop:Minimal consistent extensions of transitive relations}, \eqref{eq:3zgh}, with some 
$\alpha \nsim \beta$ to obtain 
\begin{eqnarray}
\label{eq:14tzb}
\preccurlyeq_1 
\quad := \quad
\preccurlyeq_{\sim \cup \{(\alpha,\beta)\}}
& = & 
\sim
\;
\cup
\;\,
\{(\gamma, \delta) : \gamma\sim\alpha \text{ or } \gamma=\alpha, \text{ and }  \beta\sim\delta \text{ or } \beta=\delta\} ,\\
\label{eq:15tzb}
\preccurlyeq_2
\quad := \quad
\preccurlyeq_{\sim \cup \{(\beta,\alpha)\}}
& = & 
\sim
\;
\cup
\;\,
\{(\gamma, \delta) : \gamma\sim\beta \text{ or } \gamma=\beta,  \text{ and } \alpha\sim\delta \text{ or } \alpha=\delta\} .
\end{eqnarray}
By Prop.~\ref{prop:Minimal consistent extensions of transitive relations}, Eq.~\eqref{eq:4Auzgh}, $\sim_1 \; = \; \sim_2 \; = \; \sim$,
which implies with Lemma \ref{lem:7pom} that $\sim_1$ is consistent with $\prec_2$, but $\alpha\preccurlyeq_1 \beta \prec_2 \alpha$, so $\preccurlyeq_1$
and $\prec_2$ are not chain-consistent with one another.
\end{proof}

\begin{definition}
\label{def:Completion of preorders}
For a relation $\mathbf{R}$ on $\mathcal{M}$, the relation $\ol{\mathbf{R}}$ on $\mathcal{M}$ is a consistent completion 
of $\mathbf{R}$ if $\ol{\mathbf{R}}$ is total and $\mathbf{R} \sqsubset \ol{\mathbf{R}}$.
\end{definition}

Note that any relation is trivially extended to a total one by $\mathcal{M}\times \mathcal{M}$.
However, structure-preserving, i.e.~consistent completion is less trivial. 
Recall from Lemma \ref{lem:1lkm} that a total transitive relation is a preorder.

\begin{theorem}
\label{theo:Consistent completion of transitive relations}
Any transitive relation has a consistent completion which is a preorder.
Any vector preorder has a consistent completion which again
is a vector preorder.
\end{theorem}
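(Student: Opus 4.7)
The plan is to apply Zorn's Lemma on the partial order $\sqsubset$ of consistent extensions, with Prop.~\ref{prop:Consistent order of transitive relations} supplying upper bounds for $\sqsubset$-chains and Prop.~\ref{prop:Minimal consistent extensions of transitive relations} forcing maximal elements to be total.

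For the first statement, I would fix a transitive relation $\preccurlyeq$ on $\mathcal{M}$ and let $\mathcal{F}$ be the collection of all transitive relations $\preccurlyeq'$ on $\mathcal{M}$ with $\preccurlyeq \; \sqsubset \; \preccurlyeq'$. The set $\mathcal{F}$ is non-empty, containing $\preccurlyeq$. Given a totally $\sqsubset$-ordered subfamily $\mathcal{C} \subset \mathcal{F}$, Prop.~\ref{prop:Consistent order of transitive relations} gives that $\mathbf{B} := \bigcup \mathcal{C}$ is again transitive and that $\rsym{B} = \bigcup_{\preccurlyeq' \in \mathcal{C}} \sim'$ and $\rasy{B} = \bigcup_{\preccurlyeq' \in \mathcal{C}} \prec'$; since each $\preccurlyeq'$ in $\mathcal{C}$ satisfies $\sim \; \subset \; \sim'$ and $\prec \; \subset \; \prec'$, we get $\sim \; \subset \; \rsym{B}$ and $\prec \; \subset \; \rasy{B}$, i.e.~$\preccurlyeq \; \sqsubset \; \mathbf{B}$, so $\mathbf{B}\in\mathcal{F}$ and is an upper bound of $\mathcal{C}$. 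By Zorn's Lemma, $\mathcal{F}$ contains a maximal element $\ol{\preccurlyeq}$.

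I would then show $\ol{\preccurlyeq}$ is total. If not, pick $\alpha \neq \beta$ with $\alpha \not\ol{\preccurlyeq} \beta$ and $\beta \not\ol{\preccurlyeq} \alpha$. Part~1 of Prop.~\ref{prop:Minimal consistent extensions of transitive relations} produces a transitive relation $\ol{\preccurlyeq}_{\ol{\preccurlyeq} \cup \{(\alpha,\beta)\}}$ with $\ol{\preccurlyeq} \; \sqsubset \; \ol{\preccurlyeq}_{\ol{\preccurlyeq} \cup \{(\alpha,\beta)\}}$ and $\ol{\preccurlyeq} \; \neq \; \ol{\preccurlyeq}_{\ol{\preccurlyeq} \cup \{(\alpha,\beta)\}}$; transitivity of $\sqsubset$ (Prop.~\ref{prop:Consistent order of transitive relations}) puts this strictly larger relation in $\mathcal{F}$, contradicting the maximality of $\ol{\preccurlyeq}$. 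Hence $\ol{\preccurlyeq}$ is a total transitive relation, and Lemma~\ref{lem:1lkm} upgrades it to a total preorder. By construction, $\preccurlyeq \; \sqsubset \; \ol{\preccurlyeq}$, so $\ol{\preccurlyeq}$ is a consistent completion.

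For the second statement, I would run the same argument in the smaller class $\mathcal{F}^{add,ph}$ of vector preorders consistently extending the given vector preorder $\preccurlyeq$. Prop.~\ref{prop:Consistent order of transitive relations} is general enough to preserve reflexivity, additivity, and positive homogeneity under chain unions, giving upper bounds inside $\mathcal{F}^{add,ph}$, while Part~2 of Prop.~\ref{prop:Minimal consistent extensions of transitive relations} yields strict consistent extensions within the vector preorder class whenever a pair is incomparable. Zorn then supplies a maximal, hence total, consistent extension that is itself a vector preorder.

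The only real subtleties I anticipate are bookkeeping: verifying that the chain union genuinely sits in $\mathcal{F}$ (i.e.~that the $\sqsubset$-inclusions into $\preccurlyeq$ survive the union, which is exactly what the explicit descriptions of $\rsym{B}$ and $\rasy{B}$ in Prop.~\ref{prop:Consistent order of transitive relations} deliver), and noting that the minimal extension construction stays inside the vector preorder class in the second case. The transition from maximality to totality is the key conceptual step, and it follows cleanly from the one-pair extension lemma since any incomparable pair would contradict maximality.
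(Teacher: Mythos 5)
Your strategy is exactly the paper's: apply Zorn's Lemma to the family of $\sqsubset$-consistent transitive extensions, use Prop.~\ref{prop:Consistent order of transitive relations} to get upper bounds for chains, and use Prop.~\ref{prop:Minimal consistent extensions of transitive relations} to show a maximal element must be total. For the vector preorder case your argument is complete as written, since vector preorders are reflexive and any failure of totality then automatically produces an incomparable pair with $\alpha \neq \beta$, to which Part 2 of Prop.~\ref{prop:Minimal consistent extensions of transitive relations} applies.

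For the first statement, however, there is a gap at the step ``If not, pick $\alpha \neq \beta$ with neither $\alpha \; \ol{\preccurlyeq} \; \beta$ nor $\beta \; \ol{\preccurlyeq} \; \alpha$.'' A general transitive relation need not be reflexive, and totality (property \ref{To} of Def.~\ref{def:relations}, with $\beta = \alpha$) forces $\alpha \; \ol{\preccurlyeq} \; \alpha$ for every $\alpha$; so the maximal element $\ol{\preccurlyeq}$ could a priori fail to be total only at diagonal pairs $(\alpha,\alpha)$, in which case no pair with $\alpha \neq \beta$ is available and Prop.~\ref{prop:Minimal consistent extensions of transitive relations} (which explicitly assumes $\alpha \neq \beta$) cannot be invoked. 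Your argument then shows only comparability of distinct elements, leaving open that $\ol{\preccurlyeq}$ is not reflexive, hence not total and not a preorder, and Lemma~\ref{lem:1lkm} cannot be applied. The paper closes this case separately: if $(\alpha,\alpha) \notin \; \ol{\preccurlyeq}$, then $\ol{\preccurlyeq} \cup \{(\alpha,\alpha)\}$ is still transitive, only enlarges the symmetric part, and is therefore a strictly $\sqsubset$-larger consistent extension, contradicting maximality. Adding this one observation makes your proof complete and identical in substance to the paper's.
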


\begin{proof}
Let $\preccurlyeq$ be a non-total transitive relation or vector preorder on a set or real linear space $\mathcal{M}$. 
Consider now the set of all transitive relations or all vectors preorders
on $\mathcal{M}$ which are at least as large as $\preccurlyeq$ with respect to $\sqsubset$. 
Clearly, $\sqsubset$ is a partial order on these relations.
By Prop.~\ref{prop:Consistent order of transitive relations}, any totally ordered subset has an upper bound.
Therefore, by Zorn's Lemma (cf.~\cite{ZoMa}; the usage of Zorn's ``maximum principle'' here is 
is exactly as formulated by him, namely for inclusion orders on sets), 
these relations have at least one maximal element, of which one is chosen and denoted by $\ol{\preccurlyeq}$.
Assume now that for some $\alpha, \beta \in \mathcal{M}$ neither $\alpha \; \ol{\preccurlyeq} \; \beta$, nor $\beta \; \ol{\preccurlyeq} \; \alpha$. 
If $\alpha \neq \beta$, then, by Prop.~\ref{prop:Minimal consistent extensions of transitive relations},
$\preccurlyeq^{(add,ph)}_{\ol{\preccurlyeq}\cup\{(\alpha,\beta)\}}$ is strictly $\sqsubset$-larger than $\ol{\preccurlyeq}$, which is 
a contradiction to $\ol{\preccurlyeq}$ being a maximal element.
In the case of transitive non-reflexive relations, $\alpha = \beta$ is a possibility, too. Then it is straightforward to see that
$\ol{\preccurlyeq}\cup\{(\alpha,\alpha)\}$ is strictly $\sqsubset$-larger than $\ol{\preccurlyeq}$ and transitive, 
which, again, is a contradiction to $\ol{\preccurlyeq}$ being a maximal element. 
\end{proof}


\section{A consistency theorem for transitive relations}
\label{sec:Main result}

This section proves my main theorem and states two of its corollaries.

\begin{theorem}
\label{theo:2qwc}
Let $\preccurlyeq_1$ and $\preccurlyeq_2$ be transitive relations on a set $\mathcal{M}$.
\begin{enumerate}
\item
The following are equivalent:
\begin{enumerate}
\item
$\preccurlyeq_1$ and $\preccurlyeq_2$ are chain-consistent with one another.
\item
$\preccurlyeq_{\preccurlyeq_1 \cup \, \preccurlyeq_2}$ is chain-consistent with $\preccurlyeq_1$ and with $\preccurlyeq_2$.
\item
$\preccurlyeq_{\preccurlyeq_1 \cup \, \preccurlyeq_2}$ consistently extends $\preccurlyeq_1$ and $\preccurlyeq_2$.
\item
There exists a transitive relation that consistently extends $\preccurlyeq_1$ and $\preccurlyeq_2$.
\item
There exists a common transitive consistent completion for $\preccurlyeq_1$ and $\preccurlyeq_2$.
\item
There exists a transitive consistent completion $\ol{\preccurlyeq}_1$ of $\preccurlyeq_1$, 
and $\ol{\preccurlyeq}_1$ and $\preccurlyeq_2$ are chain-consistent with one another.
\end{enumerate}
\item
$\preccurlyeq_1$ and $\preccurlyeq_2$ are chain-consistent with one another and $\preccurlyeq^r_{\preccurlyeq_1 \cup \, \preccurlyeq_2}$ is
total if and only if there exists a uniquely determined common transitive consistent completion for $\preccurlyeq_1$ and $\preccurlyeq_2$.
In this case, the preorder $\preccurlyeq^r_{\preccurlyeq_1 \cup \, \preccurlyeq_2}$ is the consistent completion.
\item
If $\preccurlyeq_1$ and $\preccurlyeq_2$ are vector preorders
on the real linear space $\mathcal{M}$, the statements above hold while
$\preccurlyeq_{\preccurlyeq_1 \cup \, \preccurlyeq_2}$ and the extensions, completions, or total preorders 
again are vector preorders.
\end{enumerate}
\end{theorem}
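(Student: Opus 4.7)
The plan for Part 1 is to establish the cycle (a) $\Rightarrow$ (c) $\Rightarrow$ (b) $\Rightarrow$ (a) and then close through (c) $\Rightarrow$ (d) $\Rightarrow$ (e) $\Rightarrow$ (f) $\Rightarrow$ (a). For (a) $\Rightarrow$ (c), Lemma \ref{lem:9oim} yields $\prec_i \; \subset \; \prec_{\preccurlyeq_1 \cup \, \preccurlyeq_2}$ directly, while $\sim_i \; \subset \; \sim_{\preccurlyeq_1 \cup \, \preccurlyeq_2}$ is immediate since both directions of $\sim_i$ sit in the transitive closure. For (c) $\Rightarrow$ (b), Lemma \ref{lem:8iuz} applied to each $\preccurlyeq_i \; \sqsubset \; \preccurlyeq_{\preccurlyeq_1 \cup \, \preccurlyeq_2}$ suffices. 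For (b) $\Rightarrow$ (a), any forbidden closed chain of $\preccurlyeq_1$ and $\preccurlyeq_2$ carrying a $\prec_i$-link already sits inside $\preccurlyeq_{\preccurlyeq_1 \cup \, \preccurlyeq_2}$ and thus violates the chain-consistency of $\preccurlyeq_{\preccurlyeq_1 \cup \, \preccurlyeq_2}$ with $\preccurlyeq_i$. (c) $\Rightarrow$ (d) is trivial. For (d) $\Rightarrow$ (e), Theorem \ref{theo:Consistent completion of transitive relations} completes any consistent extension, and transitivity of $\sqsubset$ transports the completion into a common consistent completion of $\preccurlyeq_1$ and $\preccurlyeq_2$. (e) $\Rightarrow$ (f) is another application of Lemma \ref{lem:8iuz}. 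For (f) $\Rightarrow$ (a), any hypothetical closed chain in $\preccurlyeq_1 \cup \, \preccurlyeq_2$ with a strict link lifts via $\prec_1 \; \subset \; \ol{\prec}_1$ to a closed chain in $\ol{\preccurlyeq}_1 \cup \, \preccurlyeq_2$ with a strict link, contradicting (f).

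For Part 2, the forward direction uses that $\preccurlyeq_{\preccurlyeq_1 \cup \, \preccurlyeq_2}$ already consistently extends both relations by Part 1(c); the reflexive closure only adds pairs to the symmetric part (since strict parts are irreflexive), so $\preccurlyeq^r_{\preccurlyeq_1 \cup \, \preccurlyeq_2}$ remains a consistent extension and, being the smallest preorder containing the union and total by assumption, is a consistent completion. For uniqueness, any other consistent completion $\ol{\preccurlyeq}$ is a preorder containing $\preccurlyeq_1 \cup \, \preccurlyeq_2$ and therefore contains $\preccurlyeq^r_{\preccurlyeq_1 \cup \, \preccurlyeq_2}$. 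I will then show separately that $\prec^r_{\preccurlyeq_1 \cup \, \preccurlyeq_2} \; \subset \; \ol{\prec}$ by a chain-reversal argument: if $\alpha \prec^r \beta$ for distinct $\alpha, \beta$, every witness chain in $\preccurlyeq_1 \cup \, \preccurlyeq_2$ must contain at least one strict link (otherwise reversing the all-$\sim$ chain would yield $\beta \preccurlyeq^r \alpha$), and Lemma \ref{lem:1csv} combined with the inclusions $\sim_i \; \subset \; \ol{\sim}$ and $\prec_i \; \subset \; \ol{\prec}$ then lifts the chain to $\alpha \, \ol{\prec} \, \beta$. Prop.~\ref{prop:3oiu} applied to the two total preorders then forces equality, since a strict inclusion would demand $\ol{\prec} \; \subsetneq \; \prec^r_{\preccurlyeq_1 \cup \, \preccurlyeq_2}$, contradicting what was just established. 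For the reverse direction, uniqueness implies existence and hence chain-consistency by Part 1; if $\preccurlyeq^r_{\preccurlyeq_1 \cup \, \preccurlyeq_2}$ were not total, Prop.~\ref{prop:Minimal consistent extensions of transitive relations} would yield two incompatible minimal extensions — one adjoining $(\alpha,\beta)$ and one adjoining $(\beta,\alpha)$ — each completable by Theorem \ref{theo:Consistent completion of transitive relations} to distinct total preorders, contradicting uniqueness.

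For Part 3, no new architecture is needed: Lemma \ref{lem:3mjy} makes the transitive additive positively homogeneous closure of two vector preorders coincide with the plain transitive closure and keeps it a vector preorder; Part 2 of Prop.~\ref{prop:Minimal consistent extensions of transitive relations} preserves additivity and positive homogeneity in the one-step minimal extension; and Theorem \ref{theo:Consistent completion of transitive relations} supplies vector preorder completions. The main obstacle will be the uniqueness argument in Part 2: the inclusion $\prec^r \; \subset \; \ol{\prec}$ is not formally forced by $\preccurlyeq^r \; \subset \; \ol{\preccurlyeq}$ alone, and the chain-reversal observation — that an $(\alpha,\beta) \in \prec^r$ cannot be witnessed by an all-$\sim$ chain — is the least routine ingredient. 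Once it is secured, the interplay with Prop.~\ref{prop:3oiu} closes the argument cleanly.
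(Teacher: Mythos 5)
Your proposal is correct, and most of it runs parallel to the paper: the chain (c) $\Rightarrow$ (d) $\Rightarrow$ (e) $\Rightarrow$ (f) $\Rightarrow$ (a) and the whole reverse direction of Part 2 (two incompatible one-pair extensions via Prop.~\ref{prop:Minimal consistent extensions of transitive relations}, each completed by Theorem \ref{theo:Consistent completion of transitive relations}) are exactly the paper's steps, as is the reduction of Part 3 to Lemma \ref{lem:3mjy}, Prop.~\ref{prop:Minimal consistent extensions of transitive relations}, and Theorem \ref{theo:Consistent completion of transitive relations}. Where you genuinely diverge is in two places. In Part 1 you prove (a) $\Rightarrow$ (c) directly from Lemma \ref{lem:9oim} and then get (b) from (c) via Lemma \ref{lem:8iuz}, closing with a direct (b) $\Rightarrow$ (a); the paper instead argues (a) $\Rightarrow$ (b) by a chain argument using Lemma \ref{lem:7pom}(3) and then (b) $\Rightarrow$ (c) by contradiction. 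Both orderings yield the same equivalence class, and the paper itself remarks that (a) $\Leftrightarrow$ (c) already follows from Lemma \ref{lem:9oim}, so your route is a legitimate reshuffling that trades the paper's two case analyses for the earlier lemma. The more substantive difference is the uniqueness half of Part 2: the paper assumes a second completion $\preccurlyeq^\circ$, uses inclusion-minimality of $\preccurlyeq^r_{\preccurlyeq_1\cup\preccurlyeq_2}$ to get $\preccurlyeq^r \subsetneq \preccurlyeq^\circ$, and then builds the strictly smaller preorder $\sim \cup \prec^\circ$ via Cor.~\ref{cor:2xam} and Prop.~\ref{prop:3oiu} to contradict minimality; you instead prove $\prec^r_{\preccurlyeq_1\cup\preccurlyeq_2} \subset \ol{\prec}$ head-on by the chain-reversal observation (a witness chain with only $\sim_1\cup\sim_2$ links could be reversed, contradicting strictness) plus Lemma \ref{lem:1csv}, and then let Prop.~\ref{prop:3oiu} force equality. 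Your version correctly identifies and fills the real gap -- that $\preccurlyeq^r \subset \ol{\preccurlyeq}$ alone does not give $\sqsubset$ -- and it is somewhat more self-contained, dispensing with Cor.~\ref{cor:2xam} and Prop.~\ref{prop:1kop}, at the cost of one more explicit chain manipulation; the paper's version buys brevity by leaning on the structural results it has already set up.
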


\begin{proof}
Part 1. 
`(a) $\Rightarrow$ (b)': Assume there is a closed chain for 
$\preccurlyeq_{\preccurlyeq_1 \cup \, \preccurlyeq_2}$ and 
$\preccurlyeq_2$ as in Def.~\ref{def:Consistency}.
Since, by Prop.~\ref{prop:transitive closure}, $\preccurlyeq_{\preccurlyeq_1 \cup \, \preccurlyeq_2}$ is the transitive closure of 
$\preccurlyeq_1$ and $\preccurlyeq_2$, there is a contradiction to (a) if the chain contains at least one $\prec_2$. 
Furthermore, it is straightforward to see that $\sim_2 \; \subset \; \sim_{\preccurlyeq_1 \cup \, \preccurlyeq_2}$. 
Thus, the chain must lie entirely in $\preccurlyeq_{\preccurlyeq_1 \cup \, \preccurlyeq_2}$
with at least one link from $\prec_{\preccurlyeq_1 \cup \, \preccurlyeq_2}$,
but this contradicts Lemma \ref{lem:7pom} (3). 
A similar argument proves chain consistency with $\preccurlyeq_1$.

`(b) $\Rightarrow$ (c)': It is easy to see that $(\sim_1 \cup \sim_2) \subset \; \sim_{\preccurlyeq_1 \cup \, \preccurlyeq_2}$.
Assuming that (c) does not hold, $(\prec_1 \cup \prec_2) \not\subset \; \prec_{\preccurlyeq_1 \cup \, \preccurlyeq_2}$ follows. 
Then at least one of the two following cases applies.
Case 1: $\alpha \prec_2 \beta$ and $\alpha \sim_{\preccurlyeq_1 \cup \, \preccurlyeq_2} \beta$, where
the latter implies $\beta \preccurlyeq_{\preccurlyeq_1 \cup \, \preccurlyeq_2} \alpha$. 
Therefore,  $\alpha \prec_2 \beta \preccurlyeq_{\preccurlyeq_1 \cup \, \preccurlyeq_2} \alpha$, a contradiction to (b). 
Case 2: 
$\alpha \prec_1 \beta$ and $\alpha \sim_{\preccurlyeq_1 \cup \, \preccurlyeq_2} \beta$, where
the latter implies $\beta \preccurlyeq_{\preccurlyeq_1 \cup \, \preccurlyeq_2} \alpha$. 
Therefore,  $\alpha \prec_1 \beta \preccurlyeq_{\preccurlyeq_1 \cup \, \preccurlyeq_2} \alpha$, again a contradiction to (b).

`(c) $\Rightarrow$ (d)': Trivial.
`(d) $\Rightarrow$ (e)': Theorem \ref{theo:Consistent completion of transitive relations}.
`(e) $\Rightarrow$ (f)': Lemma \ref{lem:8iuz}.

`(f) $\Rightarrow$ (a)': Assume there is a closed chain for $\preccurlyeq_1$ and $\preccurlyeq_2$ as in Def.~\ref{def:Consistency}.
Because $\preccurlyeq_1 \sqsubset \ol{\preccurlyeq}_1$, 
this means
that there is a closed chain as in Def.~\ref{def:Consistency} with members from both,  $\ol{\preccurlyeq}_1$ and $\preccurlyeq_2$,
which is a contradiction to (f). 

Part 2. `$\Rightarrow$':
With part 1 and with Lemma \ref{lem:9klw}, 
$\preccurlyeq \; := \; \preccurlyeq^r_{\preccurlyeq_1 \cup \preccurlyeq_2}$ consistently completes $\preccurlyeq_1$ and $\preccurlyeq_2$.
Assume now that $\preccurlyeq^\circ$ is a reflexive transitive relation (i.e.~a preorder) 
different from $\preccurlyeq$, for which --  like for $\preccurlyeq$ -- also
$\preccurlyeq_1 \; \sqsubset \; \preccurlyeq^\circ$ and $\preccurlyeq_2 \; \sqsubset \; \preccurlyeq^\circ$.
Since $\preccurlyeq$ is inclusion-minimal (cf.~Def.~\ref{def:Generated relation} and Cor.~\ref{cor:generatedrelation}) 
with these properties, $\preccurlyeq \; \subset \; \preccurlyeq^\circ$ holds.
Because of this inclusion, $\preccurlyeq^\circ$ must be total. 
Moreover, since $\preccurlyeq^\circ$ is supposed to be different from $\preccurlyeq$, one has 
$\preccurlyeq \; \subsetneq \; \preccurlyeq^\circ$.
By Cor.~\ref{cor:2xam} and by Prop.~\ref{prop:3oiu}, $\preccurlyeq_3 \;  := \; \sim \cup \prec^\circ$ now is a preorder which is strictly inclusion-smaller 
than $\preccurlyeq$. 
Furthermore $\preccurlyeq_1 \; \sqsubset \; \preccurlyeq_3$ and $\preccurlyeq_2 \; \sqsubset \; \preccurlyeq_3$. 
This is a contradiction to the minimality of $\preccurlyeq$.

`$\Leftarrow$': 
Chain consistency of $\preccurlyeq_1$ and $\preccurlyeq_2$ is clear from part 1.
It also follows from part 1 and from Lemma \ref{lem:9klw} that 
the preorder $\preccurlyeq \; := \; \preccurlyeq^r_{\preccurlyeq_1 \cup \preccurlyeq_2}$
consistently extends $\preccurlyeq_1$ and $\preccurlyeq_2$.
If $\preccurlyeq$ is non-total, and therefore
for some $\alpha, \beta \in \mathcal{M}$ with $\alpha \neq \beta$
neither $\alpha \preccurlyeq \beta$, nor $\beta \preccurlyeq \alpha$,
Prop.~\ref{prop:Minimal consistent extensions of transitive relations} 
shows that
\begin{eqnarray}
\preccurlyeq
& \sqsubset &
\preccurlyeq_{\preccurlyeq\cup\{(\alpha,\beta)\}} ,   \\
\preccurlyeq
& \sqsubset &
\preccurlyeq_{\preccurlyeq\cup\{(\beta, \alpha)\}} ,  \\
\preccurlyeq_{\preccurlyeq\cup\{(\beta, \alpha)\}}
& \neq &
\preccurlyeq_{\preccurlyeq\cup\{\alpha,\beta\}} ,  \\
(\alpha,\beta)
& \subset  &
\prec_{\preccurlyeq\cup\{(\alpha,\beta)\}} , \\
(\beta,\alpha)
& \subset  &
\prec_{\preccurlyeq\cup\{(\beta,\alpha)\}}  .
\end{eqnarray}
$\preccurlyeq_{\preccurlyeq\cup\{(\alpha,\beta)\}}$ and $\preccurlyeq_{\preccurlyeq\cup\{(\beta, \alpha)\}}$
can be consistently completed to transitive relations by Theorem \ref{theo:Consistent completion of transitive relations}. 
Since $(\alpha,\beta)$ will be in the asymmetric part of one of the completions, and $(\beta, \alpha)$
will be in the corresponding part of the other one, 
this proves that the {\em a priori} assumed common completion is not unique in this case.
Therefore, uniqueness of the completion implies totality of $\preccurlyeq$.

Part 3.
For part 1 this holds because of Lemma \ref{lem:3mjy} and Theorem \ref{theo:Consistent completion of transitive relations}.
For part 2,  Lemma \ref{lem:3mjy}, Cor.~\ref{cor:2xam}, Prop.~\ref{prop:3oiu}, 
Prop.~\ref{prop:Minimal consistent extensions of transitive relations}, and Theorem \ref{theo:Consistent completion of transitive relations}
cover the additive positively homogeneous case.
\end{proof}

The equivalence of 1 (a) and 1 (c) of Theorem \ref{theo:2qwc} was already implied by
Lemma \ref{lem:9oim}, the remarks underneath its proof, and by Def.~\ref{def:12mkl}. Beside the vector preorder case and
other statements, the theorem now, of course, added the completeness and uniqueness results.

\begin{corollary}
If $\preccurlyeq_1$ and $\preccurlyeq_2$ are chain-consistent with one another and $\preccurlyeq^r_{\preccurlyeq_1 \cup \preccurlyeq_2}$ is total, 
then $\preccurlyeq^r_{\preccurlyeq_1 \cup \preccurlyeq_2}$ is the only preorder $\preccurlyeq$ fulfilling 
$\preccurlyeq_1 \; \sqsubset \; \preccurlyeq$ and $\preccurlyeq_2 \; \sqsubset \; \preccurlyeq$.
\end{corollary}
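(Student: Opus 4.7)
The plan is to reduce the corollary to the uniqueness assertion of Part 2 of Theorem \ref{theo:2qwc}; the only wrinkle is that the corollary ranges over all preorders $\preccurlyeq$ extending $\preccurlyeq_1$ and $\preccurlyeq_2$ consistently, not only the total ones that appear in the definition of consistent completion.

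First I would note that $\preccurlyeq^r_{\preccurlyeq_1 \cup \preccurlyeq_2}$ itself is a preorder that consistently extends both $\preccurlyeq_1$ and $\preccurlyeq_2$. This is essentially contained in Theorem \ref{theo:2qwc}: under chain-consistency of $\preccurlyeq_1$ and $\preccurlyeq_2$, Part 1(c) says that $\preccurlyeq_{\preccurlyeq_1 \cup \preccurlyeq_2}$ consistently extends both, and Lemma \ref{lem:9klw} together with the transitivity of $\sqsubset$ from Prop.~\ref{prop:Consistent order of transitive relations} lifts this to $\preccurlyeq^r_{\preccurlyeq_1 \cup \preccurlyeq_2}$. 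Totality is the standing hypothesis.

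Next, let $\preccurlyeq$ be any preorder with $\preccurlyeq_1 \sqsubset \preccurlyeq$ and $\preccurlyeq_2 \sqsubset \preccurlyeq$. By Def.~\ref{def:12mkl}, $\sim_i \, \subset \, \sim$ and $\prec_i \, \subset \, \prec$, so $\preccurlyeq_i \, = \, \sim_i \cup \prec_i \, \subset \, \sim \cup \prec \, = \, \preccurlyeq$ for $i=1,2$, and therefore $\preccurlyeq_1 \cup \preccurlyeq_2 \, \subset \, \preccurlyeq$. Because $\preccurlyeq^r_{\preccurlyeq_1 \cup \preccurlyeq_2}$ is the inclusion-minimal preorder containing $\preccurlyeq_1 \cup \preccurlyeq_2$ (remark after Def.~\ref{def:Reflexive closure}), this forces $\preccurlyeq^r_{\preccurlyeq_1 \cup \preccurlyeq_2} \, \subset \, \preccurlyeq$. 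The assumed totality of the smaller relation then forces $\preccurlyeq$ to be total as well, so $\preccurlyeq$ is itself a common consistent completion of $\preccurlyeq_1$ and $\preccurlyeq_2$. By the uniqueness clause of Theorem \ref{theo:2qwc} Part 2, $\preccurlyeq \, = \, \preccurlyeq^r_{\preccurlyeq_1 \cup \preccurlyeq_2}$, which is exactly the claim.

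I do not expect a genuine obstacle here, as the corollary is essentially a bookkeeping consequence of Theorem \ref{theo:2qwc} Part 2; the only line worth recording explicitly is that $\preccurlyeq_i \sqsubset \preccurlyeq$ implies plain set inclusion $\preccurlyeq_i \subset \preccurlyeq$, which is immediate from the decomposition of every transitive relation into its symmetric and asymmetric part.
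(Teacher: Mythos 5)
Your proof is correct and follows essentially the route the paper intends: the corollary is stated without proof as a direct consequence of Theorem \ref{theo:2qwc}, Part 2, and your reduction (using that $\sqsubset$ implies plain inclusion, that $\preccurlyeq^r_{\preccurlyeq_1 \cup \preccurlyeq_2}$ is the smallest preorder containing $\preccurlyeq_1 \cup \preccurlyeq_2$, and that a relation containing a total one is total) mirrors exactly the steps used inside the paper's own proof of Part 2, `$\Rightarrow$'. No gap; your explicit handling of not-necessarily-total extenders is precisely the bookkeeping the paper leaves implicit.
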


Next, I finally provide an explicit statement of the meaningful reverse of the dissection of any preorder
into an equivalence relation and a strict partial order that I mentioned in the first section and that was
formalized by Lemma \ref{lem:Implied equivalence and strict preference}.
If the equivalence relation and the strict partial order are chain-consistent with one another, 
then their transitive closure consistently extends both of them, meaning: Old equivalence then implies new (extended)
equivalence, and old strict ordering implies new (extended) strict ordering. Furthermore, this transitive closure 
also can be completed while keeping the just explained implications in place.

The following formulation of my result also seems to be related to the Fundamental Theorem of Asset Pricing, 
which sits at the heart of financial mathematics.
The analogies will be explored in Section \ref{sec:Application in financial economics}.

\begin{corollary}
\label{cor:6zrn}
Let $\sim'$ be an equivalence relation and let $\prec''$ be a strict partial order on a set $\mathcal{M}$.
\begin{enumerate}
\item
The following are equivalent:
\begin{enumerate}
\item
$\sim'$ and $\prec''$ are chain-consistent with one another.
\item
$\preccurlyeq_{\sim' \cup \prec''}$ and $\prec''$ are chain-consistent with one another.
\item
For the preorder $\preccurlyeq_{\sim' \cup \prec''}$ generated by $\sim'$ and $\prec''$,
\begin{eqnarray}
\label{eq:8werA}
\sim' & \subset & \sim_{\sim' \cup \prec''}  \quad \text{and}\\
\label{eq:9werA}
\prec'' & \subset & \prec_{\sim' \cup \prec''} .
\end{eqnarray}
\item
There exists a total preorder $\ol{\preccurlyeq}$ on $\mathcal{M}$ such that
\begin{eqnarray}
\label{eq:8wer}
\sim' & \subset & \ol{\sim} \quad \text{and}\\
\label{eq:9wer}
\prec'' & \subset & \ol{\prec} .
\end{eqnarray}
\end{enumerate}
\item
$\sim'$ and $\prec''$ are chain-consistent with one another and $\preccurlyeq_{\sim' \cup \prec''}$ is total 
if and only if there exists a uniquely determined total preorder $\ol{\preccurlyeq}$ on $\mathcal{M}$ with \eqref{eq:8wer} and \eqref{eq:9wer}.
In this case, $\ol{\preccurlyeq} = \; \preccurlyeq_{\sim' \cup \prec''}$.
\item
If $\sim'$ and $\prec''$ are chain-consistent and $\preccurlyeq_{\sim' \cup \prec''}$ is total, 
then $\preccurlyeq_{\sim' \cup \prec''}$ is the only preorder $\preccurlyeq$ fulfilling
\begin{eqnarray}
\label{eq:8werAC}
\sim' & \subset & \sim \quad \text{and}\\
\label{eq:9werAC}
\prec'' & \subset & \prec .
\end{eqnarray}
\end{enumerate}
\end{corollary}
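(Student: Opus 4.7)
The plan is to obtain all three parts by specializing Theorem~\ref{theo:2qwc} to the transitive relations $\preccurlyeq_1 \,:=\, \sim'$ and $\preccurlyeq_2 \,:=\, \prec''$. Two structural observations drive the translation. First, because $\sim'$ is a symmetric transitive relation, its implied equivalence relation is $\sim'$ itself and its implied strict partial order is empty; dually, $\prec''$ has empty implied equivalence relation and implied strict partial order $\prec''$. Second, because $\sim'$ is reflexive, $\sim' \cup \prec''$ already contains the diagonal of $\mathcal{M}$, so $\preccurlyeq_{\sim' \cup \prec''} \;=\; \preccurlyeq^r_{\sim' \cup \prec''}$.

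For Part~1 I would match the four conditions against Theorem~\ref{theo:2qwc}, Part~1. Condition~(a) is literally 1(a) of the theorem. For (c), I would unpack Def.~\ref{def:12mkl} applied to $\sim' \sqsubset \preccurlyeq_{\sim' \cup \prec''}$ and $\prec'' \sqsubset \preccurlyeq_{\sim' \cup \prec''}$: two of the four inclusions become vacuous (the asymmetric part of $\sim'$ and the symmetric part of $\prec''$ are empty), leaving exactly \eqref{eq:8werA} and \eqref{eq:9werA}, which is 1(c) of the theorem. Condition~(d) corresponds to 1(e), using Lemma~\ref{lem:1lkm} to turn a total transitive consistent completion into a total preorder and the same unpacking as above to rewrite consistent completion as \eqref{eq:8wer} and \eqref{eq:9wer}. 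Finally, condition~(b) is the specialization of 1(b): chain consistency of $\preccurlyeq_{\sim' \cup \prec''}$ with $\sim'$ is automatic, since $\sim' \subset \,\sim_{\sim' \cup \prec''}$ forces any putative offending closed chain to lie entirely in $\preccurlyeq_{\sim' \cup \prec''}$ with at least one strict link, contradicting Lemma~\ref{lem:7pom}(2).

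For Part~2, I would invoke Theorem~\ref{theo:2qwc}, Part~2, using $\preccurlyeq_{\sim' \cup \prec''} = \preccurlyeq^r_{\sim' \cup \prec''}$ to rewrite the totality hypothesis, and the same translation as for condition~(d) to rewrite the existence and uniqueness of the common consistent completion as existence and uniqueness of a total preorder $\ol{\preccurlyeq}$ with \eqref{eq:8wer} and \eqref{eq:9wer}. Part~3 then follows by applying the corollary stated directly after Theorem~\ref{theo:2qwc} to the pair $(\sim', \prec'')$, once more substituting $\preccurlyeq_{\sim' \cup \prec''}$ for $\preccurlyeq^r_{\sim' \cup \prec''}$.

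The main obstacle is purely notational: carefully verifying that Def.~\ref{def:12mkl}, specialized to relations one of whose symmetric or asymmetric part is empty, reduces exactly to the single inclusions written in \eqref{eq:8werA}--\eqref{eq:9werA} and \eqref{eq:8wer}--\eqref{eq:9wer}, so that nothing from Theorem~\ref{theo:2qwc} is lost in translation. Beyond this bookkeeping and the reflexivity observation $\preccurlyeq_{\sim' \cup \prec''} = \preccurlyeq^r_{\sim' \cup \prec''}$, no new mathematical argument should be required.
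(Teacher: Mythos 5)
Your proposal is correct and follows essentially the same route as the paper: the paper also proves the corollary by specializing Theorem~\ref{theo:2qwc} to $\preccurlyeq_1\,=\,\sim'$ and $\preccurlyeq_2\,=\,\prec''$, noting exactly your two observations, namely that $\preccurlyeq_{\sim'\cup\prec''}$ is trivially chain-consistent with $\sim'$ (so condition (b) matches 1(b) of the theorem) and that $\preccurlyeq_{\sim'\cup\prec''}$ is reflexive because $\sim'$ is, so it coincides with $\preccurlyeq^r_{\sim'\cup\prec''}$. Your additional bookkeeping about the empty symmetric/asymmetric parts just makes explicit what the paper leaves implicit.
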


\begin{proof}
Theorem \ref{theo:2qwc} for $\preccurlyeq_1 \; = \; \sim'$ and $\preccurlyeq_2 \; = \; \prec''$.
Note that $\preccurlyeq_{\sim' \cup \prec''}$ is trivially chain-consistent with $\sim'$.
Also note that $\preccurlyeq_{\sim' \cup \prec''}$ is reflexive, because $\sim'$ as an equivalence relation
is already.
\end{proof}


\section{Application in geometry: Cones}
\label{sec:Geometric Interpretation}

Let $\preccurlyeq$ be a vector preorder on the real linear space $\mathcal{M}$.  
Let $\mathcal{C}\subset\mathcal{M}$ be a positive cone with vertex $o$ (the nullvector), i.e.~for any $\alpha, \beta \in \mathcal{C}$
and $a,b \in \mathbb{R}_0^+$, $a\alpha + b\beta \in \mathcal{C}$.
In the literature, a positive cone as defined here is sometimes called convex, non-negative, or simply a `cone'.

\begin{definition}
\label{def:16tze}
\begin{eqnarray}
\label{eq:30erb}
\mathcal{C}_\preccurlyeq 
& = & 
\{\gamma \in \mathcal{M} : o \preccurlyeq \gamma \} 
\end{eqnarray}
and
\begin{eqnarray}
\label{eq:31erb}
\alpha \preccurlyeq_\mathcal{C} \beta
& \text{if and only if} &
\beta-\alpha \in \mathcal{C} .
\end{eqnarray}
\end{definition}
The following statement is well-known (e.g.~\cite{NaLa} or \cite{ScHe}).

\begin{proposition}
\label{prop:12oip}
$\mathcal{C}_\preccurlyeq$ is a positive cone in $\mathcal{M}$
and $\preccurlyeq_\mathcal{C}$ is a vector preorder on $\mathcal{M}$.
Furthermore,
$\mathcal{C}_\preccurlyeq = \mathcal{C}$ if and only if $\preccurlyeq \; = \; \preccurlyeq_\mathcal{C}$.
\end{proposition}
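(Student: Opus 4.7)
My plan is to verify each claim directly from the definitions. First, I would show that $\mathcal{C}_\preccurlyeq$ is a positive cone. Given $\alpha,\beta\in \mathcal{C}_\preccurlyeq$, i.e.~$o\preccurlyeq\alpha$ and $o\preccurlyeq\beta$, and given $a,b\in\mathbb{R}_0^+$, I would apply positive homogeneity (property \ref{PH}) of $\preccurlyeq$ to the reflexive relation $o\preccurlyeq o$ to get $o=a\cdot o \preccurlyeq a\alpha$ and similarly $o\preccurlyeq b\beta$, then use additivity (property \ref{add}) to conclude $o\preccurlyeq a\alpha+b\beta$. Thus $a\alpha+b\beta\in\mathcal{C}_\preccurlyeq$. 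Also $o\in\mathcal{C}_\preccurlyeq$ by reflexivity.

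Next I would verify that $\preccurlyeq_\mathcal{C}$ is a vector preorder. Reflexivity follows because $\alpha-\alpha=o\in\mathcal{C}$ (the vertex lies in the cone). For transitivity, $\beta-\alpha\in\mathcal{C}$ and $\gamma-\beta\in\mathcal{C}$ together with the convexity/additivity of cones (taking $a=b=1$ in the cone condition) yield $\gamma-\alpha\in\mathcal{C}$. Additivity is immediate: $(\beta+\delta)-(\alpha+\gamma)=(\beta-\alpha)+(\delta-\gamma)\in\mathcal{C}$. Positive homogeneity follows from $r(\beta-\alpha)=r\beta-r\alpha\in\mathcal{C}$ for $r\in\mathbb{R}_0^+$, using the cone's closure under non-negative scalar multiplication.

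For the final equivalence, the direction ``$\preccurlyeq\;=\;\preccurlyeq_\mathcal{C} \Rightarrow \mathcal{C}_\preccurlyeq=\mathcal{C}$'' is pure unwinding of definitions: $\gamma\in\mathcal{C}_\preccurlyeq$ iff $o\preccurlyeq\gamma$ iff $o\preccurlyeq_\mathcal{C}\gamma$ iff $\gamma-o=\gamma\in\mathcal{C}$. The more substantive direction ``$\mathcal{C}_\preccurlyeq=\mathcal{C} \Rightarrow \preccurlyeq\;=\;\preccurlyeq_\mathcal{C}$'' is where the additive structure of the vector preorder matters. The key trick is the translation principle: combining $\alpha\preccurlyeq\beta$ with the reflexive $-\alpha\preccurlyeq -\alpha$ via additivity yields $o\preccurlyeq\beta-\alpha$, and conversely combining $o\preccurlyeq\beta-\alpha$ with $\alpha\preccurlyeq\alpha$ yields $\alpha\preccurlyeq\beta$. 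Hence $\alpha\preccurlyeq\beta \Leftrightarrow \beta-\alpha\in\mathcal{C}_\preccurlyeq=\mathcal{C} \Leftrightarrow \alpha\preccurlyeq_\mathcal{C}\beta$.

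I expect no serious obstacle here; the whole proposition is a routine translation between cone language and vector-preorder language. The only subtle point is remembering that additivity combined with reflexivity gives the translation-invariance that makes $\preccurlyeq$ completely determined by the set $\{\gamma : o\preccurlyeq\gamma\}$, which is precisely what drives the ``only if'' direction of the final equivalence.
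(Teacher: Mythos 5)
Your proposal is correct and follows essentially the same route as the paper's own (omitted, reference-cited) argument: direct verification of the cone and vector-preorder axioms, and the translation trick of adding $-\alpha\preccurlyeq-\alpha$ resp.\ $\alpha\preccurlyeq\alpha$ via additivity and reflexivity to pass between $\alpha\preccurlyeq\beta$ and $o\preccurlyeq\beta-\alpha$ for the equivalence. One wording slip: positive homogeneity should be applied to $o\preccurlyeq\alpha$ (giving $o=a\cdot o\preccurlyeq a\alpha$), not to $o\preccurlyeq o$, but the intended step is clear and correct.
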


Prop.~\ref{prop:12oip} establishes a bijection between the vector preorders
on $\mathcal{M}$ and the positive cones in $\mathcal{M}$.

It is well-known (e.g.~\cite{NaLa}) that $\mathcal{C} \cap (-\mathcal{C})$ is the largest linear space in $\mathcal{C}$.
By \eqref{eq:31erb},
\begin{eqnarray}
\label{eq:3wqj}
o \prec_\mathcal{C} \gamma
& \text{if and only if} &
\gamma \in \mathcal{C} \setminus (-\mathcal{C}) .
\end{eqnarray}
Therefore, the following definition seems appropriate.

\begin{definition}
\begin{eqnarray}
\label{eq:33lob}
\mathcal{C}^{L} 
& = &
\mathcal{C} \cap (-\mathcal{C})
\end{eqnarray}
is called the linear part of $\mathcal{C}$ and 
\begin{eqnarray}
\label{eq:33lobB}
\mathcal{C}^{+} 
& = &
\mathcal{C} \setminus (-\mathcal{C})
\end{eqnarray}
is called the non-linear part or strictly positive part of $\mathcal{C}$.
\end{definition}

\begin{lemma}
\label{lem:14lpmZ}
For any $\alpha \in \mathcal{C}^{+}$, $\beta \in \mathcal{C}$ and $a,b>0$, 
it holds that $a\alpha + b\beta \in \mathcal{C}^{+}$. Moreover,
\begin{eqnarray}
\label{eq:36ztnA}
\alpha \sim_\mathcal{C} \beta
& \text{if and only if} &
\beta-\alpha \in \mathcal{C}^{L} ,\\
\label{eq:36ztn}
\alpha \prec_\mathcal{C} \beta
& \text{if and only if} &
\beta-\alpha \in \mathcal{C}^{+} .
\end{eqnarray}
\end{lemma}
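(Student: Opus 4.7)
The plan is to handle the first claim by a direct contradiction argument that uses the additive closure of the cone $\mathcal{C}$, and then to derive the two equivalences by simply unfolding definitions through \eqref{eq:31erb} and the identity $\mathcal{C}^{L} = \mathcal{C} \cap (-\mathcal{C})$.

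For the first claim, let $\alpha \in \mathcal{C}^{+}$, $\beta \in \mathcal{C}$, and $a,b > 0$. Since $\mathcal{C}$ is a positive cone, $a\alpha + b\beta \in \mathcal{C}$, so it suffices to rule out membership in $-\mathcal{C}$. Suppose for contradiction that $-(a\alpha + b\beta) \in \mathcal{C}$. Adding $b\beta \in \mathcal{C}$ within the cone yields $-a\alpha \in \mathcal{C}$, and scaling by $1/a > 0$ gives $-\alpha \in \mathcal{C}$, so $\alpha \in \mathcal{C} \cap (-\mathcal{C}) = \mathcal{C}^{L}$, contradicting $\alpha \in \mathcal{C}^{+} = \mathcal{C}\setminus(-\mathcal{C})$.

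For \eqref{eq:36ztnA}, by Def.~\ref{def:Transitive relations2}, $\alpha \sim_\mathcal{C} \beta$ iff $\alpha \preccurlyeq_\mathcal{C} \beta$ and $\beta \preccurlyeq_\mathcal{C} \alpha$, which via \eqref{eq:31erb} translates to $\beta-\alpha \in \mathcal{C}$ and $-(\beta-\alpha) \in \mathcal{C}$, i.e.\ $\beta-\alpha \in \mathcal{C}^{L}$. For \eqref{eq:36ztn}, $\alpha \prec_\mathcal{C} \beta$ means $\alpha \preccurlyeq_\mathcal{C} \beta$ but $\beta \not\preccurlyeq_\mathcal{C} \alpha$, which via \eqref{eq:31erb} becomes $\beta-\alpha \in \mathcal{C}$ and $\alpha-\beta \notin \mathcal{C}$, that is, $\beta-\alpha \in \mathcal{C}\setminus(-\mathcal{C}) = \mathcal{C}^{+}$. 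Alternatively, \eqref{eq:36ztn} can be read off from the already-established \eqref{eq:3wqj} combined with the additivity of $\preccurlyeq_\mathcal{C}$ in Prop.~\ref{prop:12oip}.

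I do not expect a significant obstacle here. The one subtlety to keep in mind is that the strict positivity $a > 0$ is genuinely used in the first part, since the contradiction depends on rescaling $-a\alpha$ to $-\alpha$; the role of $b > 0$ is essentially cosmetic, as $b = 0$ would leave the trivial case $a\alpha \in \mathcal{C}^{+}$.
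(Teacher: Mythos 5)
Your proposal is correct and follows essentially the same route as the paper: the first claim is proved by assuming $a\alpha+b\beta$ also lies in $-\mathcal{C}$ and using cone additivity and positive homogeneity to force $\alpha\in-\mathcal{C}$, a contradiction (the paper works inside the cone $-\mathcal{C}$, you work inside $\mathcal{C}$, which is the same argument mirrored), and the two equivalences are obtained by unfolding \eqref{eq:31erb} together with $\mathcal{C}^{L}=\mathcal{C}\cap(-\mathcal{C})$ and $\mathcal{C}^{+}=\mathcal{C}\setminus(-\mathcal{C})$, just as the paper does via \eqref{eq:33lob} and \eqref{eq:3wqj}. No gaps.
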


\begin{proof}
$a\alpha + b\beta \in \mathcal{C}$ holds. If also $a\alpha + b\beta \in (-\mathcal{C})$, then, since $-b\beta \in (-\mathcal{C})$ and since 
$(-\mathcal{C})$ is a positive cone as well, 
it follows that $a\alpha, \alpha \in (-\mathcal{C})$, which is a contradiction.
\eqref{eq:36ztnA} follows directly from \eqref{eq:31erb} and \eqref{eq:33lob},
and \eqref{eq:36ztn} is \eqref{eq:3wqj}.
\end{proof}

Together with Lemma \ref{lem:8klm}, these observations lead to the following result.

\begin{corollary}
\label{cor:12oip}
There exists a bijection between the additive positively homogeneous partial orders
(i.e.~vector partial orders) on $\mathcal{M}$ and
the positive cones in $\mathcal{M}$ for which $\mathcal{C}^L = \{o\}$.
\end{corollary}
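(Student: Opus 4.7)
The plan is to restrict the bijection of Proposition \ref{prop:12oip} to the antisymmetric subfamilies on both sides and verify the correspondence is precisely captured by the condition $\mathcal{C}^L = \{o\}$. Recalling that a vector partial order is by Definition \ref{def:partial order} (and Definition \ref{def:VectorPreorder}) exactly an antisymmetric vector preorder, and that by Lemma \ref{lem:8klm} antisymmetry of $\preccurlyeq$ is equivalent to $\sim$ being the identity diagonal $\{(\alpha, \alpha) : \alpha \in \mathcal{M}\}$, the task reduces to showing that, under the bijection $\mathcal{C} \leftrightarrow \preccurlyeq_\mathcal{C}$, this diagonal condition on $\sim_\mathcal{C}$ translates exactly to $\mathcal{C}^L = \{o\}$.

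The key tool is Lemma \ref{lem:14lpmZ}, Equation \eqref{eq:36ztnA}, which tells us $\alpha \sim_\mathcal{C} \beta$ if and only if $\beta - \alpha \in \mathcal{C}^L$. For the forward direction, assume $\preccurlyeq_\mathcal{C}$ is antisymmetric; then for any $\gamma \in \mathcal{C}^L$ we have $o \sim_\mathcal{C} \gamma$ by \eqref{eq:36ztnA}, whence antisymmetry (via Lemma \ref{lem:8klm}) forces $\gamma = o$, so $\mathcal{C}^L = \{o\}$. For the reverse direction, assume $\mathcal{C}^L = \{o\}$; then $\alpha \sim_\mathcal{C} \beta$ forces $\beta - \alpha \in \mathcal{C}^L = \{o\}$, i.e.\ $\alpha = \beta$, which is antisymmetry of $\preccurlyeq_\mathcal{C}$.

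With this equivalence in hand, the bijection of Proposition \ref{prop:12oip} restricts to a bijection between the antisymmetric vector preorders, which are precisely the vector partial orders, on one side, and the positive cones $\mathcal{C}$ with $\mathcal{C}^L = \{o\}$ on the other. I would close by noting that no obstacle arises here: the work is entirely a bookkeeping exercise combining Proposition \ref{prop:12oip}, Lemma \ref{lem:8klm}, and Lemma \ref{lem:14lpmZ}, with the only mild subtlety being the use of the translation-invariance built into \eqref{eq:36ztnA} to reduce the antisymmetry condition on all pairs $(\alpha,\beta)$ to the single condition on $\mathcal{C}^L$ obtained by setting $\alpha = o$.
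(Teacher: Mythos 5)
Your proposal is correct and follows essentially the same route as the paper: restricting the bijection of Prop.~\ref{prop:12oip} and using Lemma \ref{lem:8klm} together with \eqref{eq:36ztnA} (i.e.\ $\alpha \sim_\mathcal{C} \beta \Leftrightarrow \beta-\alpha \in \mathcal{C}^L$) to identify antisymmetry of $\preccurlyeq_\mathcal{C}$ with the condition $\mathcal{C}^L = \{o\}$. Your write-up just spells out both directions of that equivalence a bit more explicitly than the paper does.
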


The 1-to-1 correspondence of Cor.~\ref{cor:12oip} is a restriction of the one from Prop.~\ref{prop:12oip}.

\begin{definition}
If $\mathcal{C} \cup (-\mathcal{C}) = \mathcal{M}$, I call $\mathcal{C}$ complete or total,
\end{definition}

It is easy to show that this condition holds if and only if $\preccurlyeq_\mathcal{C}$ is total.

\begin{corollary}
\label{cor:1pom}
If $\gamma_i \in \mathcal{C}$, $i=1,\ldots,n$, with at least one $k\in\{1,\ldots,n\}$ where $\gamma_k \in \mathcal{C}^{+}$, 
then $\sum_{i=1}^n \gamma_i \in \mathcal{C}^{+}$.
\end{corollary}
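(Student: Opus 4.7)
The plan is to reduce this directly to Lemma \ref{lem:14lpmZ}, which already handles the key two-term case. The statement is essentially saying that once a single ``strictly positive'' summand is present, it cannot be cancelled by the remaining terms from $\mathcal{C}$.

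First I would dispose of the trivial case $n=1$: the hypothesis then forces $\gamma_1 = \gamma_k \in \mathcal{C}^+$, and there is nothing to show. For $n \geq 2$, I would isolate the strictly positive summand and write
\begin{eqnarray*}
\sum_{i=1}^n \gamma_i & = & \gamma_k + \beta, \qquad \text{where} \qquad \beta \; = \; \sum_{i \in \{1,\ldots,n\}\setminus\{k\}} \gamma_i .
\end{eqnarray*}
A short induction on the number of summands, using the defining property of a positive cone (closure under non-negative linear combinations, taken with coefficients $1$ and $1$), shows that $\beta \in \mathcal{C}$.

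Finally, I would apply Lemma \ref{lem:14lpmZ} with $\alpha = \gamma_k \in \mathcal{C}^{+}$, the just-constructed $\beta \in \mathcal{C}$, and $a = b = 1 > 0$, obtaining $\gamma_k + \beta \in \mathcal{C}^{+}$, which is exactly the claim. There is no real obstacle here; the only thing one must observe is that the positivity coefficients required by Lemma \ref{lem:14lpmZ} are strictly positive, which is guaranteed by the choice $a=b=1$, so the lemma applies regardless of whether $\beta$ happens to land in $\mathcal{C}^{L}$ or not.
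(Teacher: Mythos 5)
Your argument is correct and is essentially the paper's proof, which likewise reduces the statement to Lemma \ref{lem:14lpmZ} by induction on the number of summands; isolating $\gamma_k$ and grouping the remaining terms into a single $\beta \in \mathcal{C}$ is just a slightly repackaged form of that induction. The application of the lemma with $a=b=1$ is valid exactly as you note, since it only requires $\gamma_k \in \mathcal{C}^{+}$, $\beta \in \mathcal{C}$, and strictly positive coefficients.
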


\begin{proof}
By induction from Lemma \ref{lem:14lpmZ}. 
\end{proof}

Expressed differently, Corollary \ref{cor:1pom} states that a walk, where each step is a member of the cone and at least one step is in the
strictly positive direction of the cone, cannot return to the same place and, in fact, ends up at a new place in strictly positive direction
from the old position.

Using again the analogy of a walk, what if one takes each step from one of two different cones,
$\mathcal{C}_1$ and $\mathcal{C}_2$, and makes sure at least one step is from the strictly positive part of one of the cones
- is it then possible to return to the same spot?
Theorem \ref{theo:2qwc} in its version for vector preorders
gives the answer that never being able to return is equivalent to that both cones can be embedded into
a larger cone such that both linear parts are embedded into the linear part of the larger cone, 
and both strictly positive parts are embedded into the strictly positive part.
To properly formulate this statement, I need the following definitions.

\begin{definition}
\label{def:19uhn}
Two positive cones $\mathcal{C}_1$ and $\mathcal{C}_2$ in a real linear space $\mathcal{M}$
are path-consistent with one another if there exist 
no $\delta_i \in \mathcal{C}_1\cup\mathcal{C}_2$, $i=1,\ldots,m$, with at least one $k\in\{1,\ldots,m\}$ where 
$\delta_k \in \mathcal{C}^{+}_1\cup\mathcal{C}^{+}_2$ such that $\sum_{i=1}^{m} \delta_i = o$.
\end{definition}

\begin{definition}
A positive cone $\mathcal{C}_2$ consistently extends a positive cone $\mathcal{C}_1$ 
 in a real linear space $\mathcal{M}$ if
$\mathcal{C}^{L}_1 \subset \mathcal{C}^{L}_2$ 
and
$\mathcal{C}^{+}_1 \subset \mathcal{C}^{+}_2$.
If $\mathcal{C}_2$ is complete, then it is called a consistent completion of $\mathcal{C}_1$.
\end{definition}

\begin{definition}
For two subsets $A$ and $B$ of a real linear space $\mathcal{M}$, I define
\begin{equation}
A \uplus B 
\; = \; 
A \cup (A + B) \cup B .
\end{equation}
\end{definition}

Obviously, $\uplus$ only differs from $+$ if $o\notin A$ or $o\notin B$.

\begin{corollary}
\label{cor:9uzt}
Given two positive cones $\mathcal{C}_1$ and $\mathcal{C}_2$ in a real linear space $\mathcal{M}$,
the following are equivalent.
\begin{enumerate}
\item
$\mathcal{C}_1$ and $\mathcal{C}_2$ are path-consistent with one another.
\item
There exist no $\delta_i \in \mathcal{C}_1 + \mathcal{C}_2$, $i=1,\ldots,m$, with at least one $k\in\{1,\ldots,m\}$ where 
$\delta_k \in \mathcal{C}^{+}_1 \uplus \mathcal{C}^{+}_2$ such that $\sum_{i=1}^{m} \delta_i = o$.
\item
There exists a cone $\mathcal{C} \subset \mathcal{M}$ that consistently extends $\mathcal{C}_1$ and $\mathcal{C}_2$.
$\mathcal{C}$ can be chosen to be a consistent completion. 
\item
There exists a cone $\mathcal{C} \subset \mathcal{M}$ such that
\begin{eqnarray}
\label{eq:37iopA}
\mathcal{C}^{L}_1 + \mathcal{C}^{L}_2 
&\subset&
\mathcal{C}^{L} \quad \text{resp.}\\
\label{eq:38iopA}
\mathcal{C}^{+}_1 \uplus \mathcal{C}^{+}_2 
&\subset&
\mathcal{C}^{+} .
\end{eqnarray}
$\mathcal{C}$ can be chosen to be complete. 
\item
The vector preorders $\preccurlyeq_{\mathcal{C}_1}$ and  $\preccurlyeq_{\mathcal{C}_2}$ 
are chain-consistent with one another.
\end{enumerate}
\end{corollary}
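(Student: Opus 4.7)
The plan is to translate each of the conditions (1)--(5) into statements about the vector preorders $\preccurlyeq_1 \; := \; \preccurlyeq_{\mathcal{C}_1}$ and $\preccurlyeq_2 \; := \; \preccurlyeq_{\mathcal{C}_2}$, then to invoke Theorem~\ref{theo:2qwc}, part~3. The dictionary is provided by Prop.~\ref{prop:12oip} and Lemma~\ref{lem:14lpmZ}: under the bijection $\mathcal{C} \leftrightarrow \; \preccurlyeq_\mathcal{C}$, the linear part $\mathcal{C}^L$ corresponds to the symmetric part $\sim_\mathcal{C}$ since $\alpha \sim_\mathcal{C} \beta$ iff $\beta - \alpha \in \mathcal{C}^L$, while the strictly positive part $\mathcal{C}^+$ corresponds to the asymmetric part $\prec_\mathcal{C}$ since $\alpha \prec_\mathcal{C} \beta$ iff $\beta - \alpha \in \mathcal{C}^+$. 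In particular, consistent extension of cones translates exactly to $\sqsubset$ between the corresponding vector preorders, and totality of a cone to totality of its vector preorder.

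For (1) $\iff$ (5), a tuple $(\delta_1, \ldots, \delta_m)$ with $\delta_i \in \mathcal{C}_1 \cup \mathcal{C}_2$ summing to $o$ corresponds, via the partial sums $\gamma_1 := o$ and $\gamma_{i+1} := \gamma_i + \delta_i$, to a closed chain with links in $\preccurlyeq_1 \cup \preccurlyeq_2$; any $\delta_k \in \mathcal{C}^+_1 \cup \mathcal{C}^+_2$ constitutes a strict link, and the correspondence is reversible by taking differences. For (1) $\iff$ (2), since $o \in \mathcal{C}_i$, any path as in (1) is also a path as in (2); conversely, decomposing each $\delta_i \in \mathcal{C}_1 + \mathcal{C}_2$ as $\alpha_i + \beta_i$ with $\alpha_i \in \mathcal{C}_1$ and $\beta_i \in \mathcal{C}_2$ yields a path as in (1), and the decomposition of the distinguished $\delta_k \in \mathcal{C}^+_1 \uplus \mathcal{C}^+_2 = \mathcal{C}^+_1 \cup (\mathcal{C}^+_1 + \mathcal{C}^+_2) \cup \mathcal{C}^+_2$ can be arranged so that $\alpha_k \in \mathcal{C}^+_1$ or $\beta_k \in \mathcal{C}^+_2$.

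The implication (5) $\Rightarrow$ (3) is then obtained by applying Theorem~\ref{theo:2qwc}, part~3: chain-consistency of the two vector preorders yields a consistently extending, even consistently completing, vector preorder, which under the bijection becomes a consistently extending, respectively completing, cone $\mathcal{C}$. The reverse (3) $\Rightarrow$ (5) follows from Lemma~\ref{lem:8iuz} applied to the corresponding preorders. Finally, for (3) $\iff$ (4): given $\mathcal{C}$ satisfying (3), linearity of $\mathcal{C}^L$ yields $\mathcal{C}^L_1 + \mathcal{C}^L_2 \subset \mathcal{C}^L$, and Lemma~\ref{lem:14lpmZ} yields $\mathcal{C}^+_1 + \mathcal{C}^+_2 \subset \mathcal{C}^+$, hence the full $\uplus$-inclusion. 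Conversely, since $o \in \mathcal{C}^L_j$ for each $j$, the sum inclusion $\mathcal{C}^L_1 + \mathcal{C}^L_2 \subset \mathcal{C}^L$ gives back $\mathcal{C}^L_i \subset \mathcal{C}^L$, and $\mathcal{C}^+_i \subset \mathcal{C}^+_1 \uplus \mathcal{C}^+_2 \subset \mathcal{C}^+$ is immediate from the definition of $\uplus$.

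The main obstacle is bookkeeping: verifying that the walk-to-chain translation preserves the ``at least one strict step'' condition in both directions, and that the decomposition needed for (2) $\Rightarrow$ (1) correctly locates a strict step using the three-way union defining $\uplus$ together with Lemma~\ref{lem:14lpmZ}. Once these correspondences are in place, the content of the corollary is a direct translation of Theorem~\ref{theo:2qwc}.
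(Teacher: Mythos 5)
Your proposal is correct and follows essentially the same route as the paper: the dictionary of Prop.~\ref{prop:12oip} and Lemma~\ref{lem:14lpmZ} to translate cones into vector preorders, the walk--chain correspondence via partial sums for $1\Leftrightarrow 5$, splitting sums into constituents for $1\Leftrightarrow 2$, Theorem~\ref{theo:2qwc} (vector preorder version) for the extension/completion in $3$, and additivity of $\mathcal{C}^{L}$, $\mathcal{C}^{+}$ for $3\Leftrightarrow 4$. One small citation slip: for $3\Rightarrow 5$, Lemma~\ref{lem:8iuz} only gives chain-consistency of each $\preccurlyeq_{\mathcal{C}_i}$ with $\preccurlyeq_{\mathcal{C}}$, not of $\preccurlyeq_{\mathcal{C}_1}$ with $\preccurlyeq_{\mathcal{C}_2}$; you should instead invoke the implication (d)$\Rightarrow$(a) of Theorem~\ref{theo:2qwc} (or argue directly via Lemma~\ref{lem:7pom}, as in its proof), which you already have at hand.
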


Thus, two positive cones are path-consistent with one another if and only if there exists 
a common consistent completion of them.

\begin{proof}
`1 $\Rightarrow$ 2':
If a set of $\delta_i$ as in statement 2 did exist (as opposed to not existing, as required), then splitting 
each $\delta_i$ into its constituents from $\mathcal{C}_1$ and from $\mathcal{C}_2$
or, if necessary, into its constituents from $\mathcal{C}^{+}_1$ and from $\mathcal{C}^{+}_2$
would create a set in direct contradiction to statement 1.

`1 $\Leftarrow$ 2': 
This is straightforward since 
$\mathcal{C}_1\cup\mathcal{C}_2 \subset \mathcal{C}_1 + \mathcal{C}_2$
and
$\mathcal{C}^{+}_1\cup\mathcal{C}^{+}_2 \subset \mathcal{C}^{+}_1 \uplus \mathcal{C}^{+}_2$.

`1 $\Rightarrow$ 5':
Because of Lemma \ref{lem:14lpmZ},
it is easy to check that, for $\preccurlyeq_{\mathcal{C}_1}$ and  $\preccurlyeq_{\mathcal{C}_2}$,
a chain with members $(\gamma_i,\gamma_{i+1})$ as in Def.~\ref{def:Consistency} would via
$\delta_i = \gamma_{i+1}-\gamma_i$ for $i=1,\ldots,n-1$ and $m = n-1$ imply 
$\{\delta_1,\ldots,\delta_{m}\}$ as in Def.~\ref{def:19uhn}.

`1 $\Leftarrow$ 5':
Vice versa, because of Lemma \ref{lem:14lpmZ},
$\{\delta_1,\ldots,\delta_{m}\}$ as in Def.~\ref{def:19uhn} would via 
$\gamma_1 = o$, $\gamma_i = \sum_{k=1}^{i-1} \delta_i$ for $i=2,\ldots,m+1$ and $n = m+1$
imply a chain as in Def.~\ref{def:Consistency} for $\preccurlyeq_{\mathcal{C}_1}$ and  $\preccurlyeq_{\mathcal{C}_2}$, 
which starts and ends in $o$. 

`3 $\Rightarrow$ 5':
The third statement implies
\begin{eqnarray}
\label{eq:37iop}
\mathcal{C}^{L}_1 \cup \mathcal{C}^{L}_2 
&\subset&
\mathcal{C}^{L} \quad \text{and}\\
\label{eq:38iop}
\mathcal{C}^{+}_1 \cup \mathcal{C}^{+}_2 
&\subset&
\mathcal{C}^{+} ,
\end{eqnarray}
and therefore implies with Lemma \ref{lem:14lpmZ} that the vector preorder $\preccurlyeq_{\mathcal{C}}$
consistently extends the vector preorders
$\preccurlyeq_{\mathcal{C}_1}$ and  $\preccurlyeq_{\mathcal{C}_2}$.

`3 $\Leftarrow$ 5':
Vice versa, if the vector preorder $\preccurlyeq_{\mathcal{C}}$ consistently extends the vector preorders
$\preccurlyeq_{\mathcal{C}_1}$ and  $\preccurlyeq_{\mathcal{C}_2}$, then Lemma \ref{lem:14lpmZ} implies
\eqref{eq:37iop} and \eqref{eq:38iop}.
Regarding completeness in statement 3, the equivalence of the completeness of $\mathcal{C}$ and $\preccurlyeq_{\mathcal{C}}$ 
had already been mentioned.
Thus, via the corresponding completeness statement of Theorem \ref{theo:2qwc}, equivalence of statement 1 and 3 is given.

`3 $\Rightarrow$ 4': 
Follows directly from the additivity of $\mathcal{C}^{L}$ and $\mathcal{C}^{+}$.

`3 $\Leftarrow$ 4': 
Follows directly from 
$\mathcal{C}_1\cup\mathcal{C}_2 \subset \mathcal{C}_1 + \mathcal{C}_2$
and
$\mathcal{C}^{+}_1\cup\mathcal{C}^{+}_2 \subset \mathcal{C}^{+}_1 \uplus \mathcal{C}^{+}_2$.
\end{proof}

Before a closing example for this section, 
and in analogy to Prop.~\ref{prop:ConsistencyII}, 
note that while path consistency of two cones via Corollary \ref{cor:9uzt} implies
\begin{eqnarray}
\label{eq:42tgv}
(\mathcal{C}^{L}_1 \cup \mathcal{C}^{L}_2)
\cap
(\mathcal{C}^{+}_1 \cup \mathcal{C}^{+}_2)
& = &
\emptyset ,
\end{eqnarray}
the reverse generally does not hold. A simple example for this in $\mathcal{M} = \mathbb{R}^2$ is given by 
$\mathcal{C}_1 = \mathbb{R} \times \mathbb{R}_0^+$ and $\mathcal{C}_2 = \mathbb{R} \times \mathbb{R}_0^-$.

\begin{example}
\label{ex:3qwv}
Expanding on Example \ref{ex:1D} with the vector preorder $\preccurlyeq_f$ on $\mathcal{M}$
stemming from a linear functional on 
a subspace $\mathcal{V} \subset \mathcal{M}$, one obtains from \eqref{eq:30erb} and \eqref{eq:33lob} that 
\begin{eqnarray}
\label{eq:9qyp}
\mathcal{C}_{\preccurlyeq_f} 
& = & 
f^{-1}(\mathbb{R}^+_0) ,\\
\label{eq:47ztbA}
\mathcal{C}^L_{\preccurlyeq_f} 
& = & 
f^{-1}(0) \; = \; \textnormal{Ker}(f) , \quad \text{and}\\
\label{eq:11qyp}
\mathcal{C}^{+}_{\preccurlyeq_f} 
& = & 
f^{-1}(\mathbb{R}^+_{>0}) , 
\end{eqnarray}
where the last one could be empty.
For a positive cone $\mathcal{C} \subset \mathcal{M}$,
assume now that there exist no $\delta_i\in f^{-1}(\mathbb{R}^+_0) \cup \,\mathcal{C}$, $i=1,\ldots,m$, 
with at least one $k\in\{1,\ldots,m\}$ where
$\delta_k \in f^{-1}(\mathbb{R}^+_{>0}) \, \cup \, \mathcal{C}^+$
such that $\sum_{i=1}^{m} \delta_i = o$. 
This means that $\mathcal{C}$ and the positive cone $\mathcal{C}_{\preccurlyeq_f}$ implied by $f$ are path-consistent.
It now holds that
\begin{eqnarray}
\label{eq:46ztb}
\mathcal{V} \cap \mathcal{C}
& \subset & 
f^{-1}(\mathbb{R}^+_0) , \\
\label{eq:47ztb}
\mathcal{V} \cap \mathcal{C}^L
& \subset & 
f^{-1}(0) , \quad \text{and}\\
\label{eq:48ztb}
\mathcal{V} \cap \mathcal{C}^{+}
& \subset & 
f^{-1}(\mathbb{R}^+_{>0}) .
\end{eqnarray}
To see \eqref{eq:46ztb}, assume that $f(\alpha) < 0$ for $\alpha\in \mathcal{V} \cap \mathcal{C}$. Then,
$\alpha - \alpha = o$, but $\alpha\in \mathcal{C}$ and $-\alpha\in f^{-1}(\mathbb{R}^+_{>0})$, which is a contradiction to
the path consistency requirement. \eqref{eq:47ztb} and \eqref{eq:48ztb} now follow with \eqref{eq:47ztbA} and \eqref{eq:11qyp}
from the consistent common extension statement of Corollary \ref{cor:9uzt}.

\eqref{eq:47ztb} with \eqref{eq:48ztb} is a stricter condition than the sometimes considered mere `positivity', 
which usually is understood as non-negativity, of $f$ on the cone $\mathcal{C}$ (e.g.~\cite{CaRe}). 
As such, the consistency between $\mathcal{C}$ and $f$, which is given as the path-consistency
between $\mathcal{C}$ and $\mathcal{C}_{\preccurlyeq_f}$, does not only imply a total cone which consistently extends
$\mathcal{C}$ and $\mathcal{C}_{\preccurlyeq_f}$, but it also implies a strict type of positivity of $f$ on $\mathcal{C}$.
\end{example}

In Example \ref{ex:3qwv},
if there existed a linear extension $F$ of $f$ onto $\mathcal{M}$ which implied the extending cone, 
this would be an addition of precision to the M.~Riesz Extension Theorem (e.g.~\cite{CaRe}).
However, I was not able to generally derive such a result.


\section{Application in financial economics}
\label{sec:Application in financial economics}

In financial economics or mathematical finance, an often encountered model setup is a market of traded financial goods -- 
meaning of deterministic or stochastic cash flows.
Typically, exchangeability is indirectly described by the property of having the same price, 
where price is the multiple of the num\'eraire, or unit of account, against which a good can be traded,
while it is assumed that all {\em a priori} given goods of the given market can be traded for a certain amount of the num\'eraire
(for the more mathematical articles on this topic, see \cite{DeFr} and the references therein).
Of course, any good can be exchanged with itself.
While usually not stated explicitly and because, typically, the price functional is {\em a priori} assumed to be linear 
in the goods, the relation of spot trades at time zero established by these assumptions
usually creates a linear equivalence relation on the goods, which are typically given in the form of a real linear space.
However, below, I will not require any linearity assumptions.

Another part of the model setups is that there is 
an understanding of what makes one financial good, or cash flow, for all market participants -- and, thus, objectively -- 
strictly better than another.
For instance, for stochastic cash flows, one cash flow usually is considered strictly better than another one 
if in all economic scenarios, meaning in any combination of a time and of an event with a non-zero probability, it pays
at least as much as the corresponding other cash flow, and in at least one such scenario it pays strictly more.

In economic theory, preorders are used as preference orders which describe the economic preferences 
of market participants regarding certain goods.
Strict partial orders are used as as strict preference orders, and equivalence relations emerge as indifference relations
(see e.g.~\cite{MaAn}, a widely known and appreciated textbook).

Summarily, it can therefore be assumed that there are two transitive relations which describe the typical
finance models outlined above.
First, an objective indifference relation, 
i.e.~an equivalence relation, $\sim'$, can be used to describe which goods can be exchanged for one another in this market.
Any market participant can be indifferent towards two goods $\alpha \sim' \beta$, because she or he can swap one for the
other at any time. Note that I dropped any linearity assumption for greater generality.
Second, there separately exists an objective strict preference order, $\prec''$, in accordance to which all market participants act.

\begin{definition}
A financial market is given by a set of financial goods or cash flows, $\mathcal{M}$, together with
an objective indifference relation, $\sim'$, on $\mathcal{M}$, which is an equivalence relation,
and an objective strict preference order, $\prec''$, on $\mathcal{M}$,
which is a strict partial order. 
\end{definition}

Two types of trades are possible for an agent in this market: 
\begin{enumerate}
\item Exchange a cash flow for one which is 
considered equivalent to the first one by the market, i.e.~receive $\alpha$ for $\beta$ if $\alpha \sim' \beta$.
Therefore, $\sim'$ is the `fair exchange' relation.
\item Trade a cash flow in for one which is considered strictly worse than the first one by the market, 
i.e.~receive $\alpha$ for $\beta$ if $\alpha \prec'' \beta$. Thus, $\prec''$ can be viewed as the `down-trade' relation.
\end{enumerate}
The following definition for the transitive closure of the two relations makes sense to me.
Note that any transitive closure of a reflexive relation naturally is reflexive, too.

\begin{definition}
\label{def:24wec}
The preorder $\preccurlyeq_{\sim' \cup \prec''}$ generated by the fair exchange relation (objective indifference relation) 
and by the down-trade relation (objective strict preference order) is called the trade relation or the attainable trades.
\end{definition}

An important question in finance now is under which conditions a given market is rational in the sense that
there exist no so-called arbitrage opportunities. 
These are trades, or chains of trades, where a cash flow can be swapped for an objectively better one at no additional cost.
Such trades are also called a `free lunch' (cf.~\cite{DeFr}), or it is said that someone made `money out of nothing'.
Simply put, these deals are `too good to be true'.

\begin{definition}
An arbitrage opportunity is a finite chain of either fair exchanges, down-trades, or both, which summarily
exchanges a good $\alpha\in\mathcal{M}$ for an objectively better one, $\beta\in\mathcal{M}$, i.e.~$\alpha \prec'' \beta$.
The condition of `no-arbitrage' is the absence of arbitrage opportunities in the market $(\mathcal{M}, \sim', \prec'')$.
\end{definition}

Without being able to here go into any details, 
in financial economics or mathematical finance, the so-called Fundamental Theorem of Asset Pricing 
(FTAP; e.g.~\cite{RosSt}, \cite{HaJ}, and \cite{DeFr} in chronological order)
establishes the equivalence of the absence of arbitrage and the existence of a risk-neutral pricing measure (RNPM; for stochastic one-period models)
or an equivalent martingale measure (EMM; for multi-period or time-continuous models).
By means of the risk-neutral pricing formula (a discounted expectation), the RNPM or the EMM implies a linear price functional
for all cash flows which is in line, or consistent, with the existing prices and with the existing objective strict preference order
(more preferable means more expensive). It therefore completes the market. 
Completeness here means that all cash flows are replicable by trading strategies and, thus, have a price. 
Having a price, however, means that any two cash flows can be compared with one another.

I think it is remarkable that, while later articles on FTAP are mostly concerned with the stochastic aspects,
earlier publications (cf.~\cite{RosSt}, \cite{HaJ}) also show some focus on FTAP as an extension theorem of the
pre-existing linear price functional.
As such, also in its modern versions, FTAP is an extension or completion theorem and the {\em a priori} given 
linear trade relation and the strict preference relation
can be completed in a consistent -- which here means arbitrage-free -- manner if and only if the 
original market is arbitrage-free. Moreover, FTAP also states that the RNPM or the EMM, and therefore the new
price functional, is uniquely determined if and only if the {\em a priori} given market is complete.

\begin{theorem}
\label{theo:4oib}
For a financial market $(\mathcal{M}, \sim', \prec'')$, the following are equivalent.
\begin{enumerate}
\item
It is free of arbitrage opportunities.
\item
Its fair exchanges, $\sim'$, and its down-trades, $\prec''$, are chain-consistent with one another. 
\item
Its attainable trades consistently extend / are chain-consistent with its fair exchanges, $\sim'$, and its down-trades, $\prec''$. 
\item
There exists a complete (math.: total) preference order on $\mathcal{M}$ which 
consistently extends the objective indifference relation, $\sim'$, and the objective strict preference order, $\prec''$. 
\end{enumerate}
Under no-arbitrage, the attainable trades are complete (math.: total) if and only if the consistent completion is unique.
\end{theorem}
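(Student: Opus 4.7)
My plan is to reduce the theorem to Corollary \ref{cor:6zrn}, applied with $\sim'$ in the role of the equivalence relation and $\prec''$ in the role of the strict partial order. Once I have shown that statement (1) is equivalent to statement (2) --- i.e.~that no-arbitrage is exactly chain-consistency of $\sim'$ and $\prec''$ --- the equivalences among (2), (3), (4) and the uniqueness assertion will follow line by line from parts 1 and 2 of that corollary. The substantive work is therefore concentrated in the single translation between ``arbitrage opportunity'' and ``chain-inconsistency''.

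To carry out this translation, I would first re-express an arbitrage chain as a sequence of ordered pairs. A chain of fair exchanges and down-trades from $\alpha$ to $\beta$ consists of intermediate goods $\alpha = \gamma_1, \ldots, \gamma_n = \beta$ with each transition $\gamma_i \to \gamma_{i+1}$ either a fair exchange ($\gamma_i \sim' \gamma_{i+1}$) or a down-trade in which the better $\gamma_i$ is given up for the worse $\gamma_{i+1}$, i.e.~$(\gamma_{i+1}, \gamma_i) \in \,\prec''$. Arbitrage additionally requires $\alpha \prec'' \beta$. For $\neg(1) \Rightarrow \neg(2)$, I reverse every link of such an arbitrage chain to produce a chain from $\beta$ to $\alpha$ with links in $\sim' \cup \prec''$, and then prepend the strict link $(\alpha, \beta) \in \,\prec''$; this yields a closed chain in $\sim' \cup \prec''$ containing at least one $\prec''$-link, which is precisely the configuration forbidden by Definition \ref{def:Consistency}. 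For $\neg(2) \Rightarrow \neg(1)$, I start from a closed chain in $\sim' \cup \prec''$ with a distinguished strict link $(\gamma_k, \gamma_{k+1}) \in \,\prec''$, and reverse the remainder of the chain to obtain a sequence of fair exchanges and down-trades from $\gamma_k$ to $\gamma_{k+1}$ with $\gamma_k \prec'' \gamma_{k+1}$ --- an arbitrage opportunity.

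Once (1) $\Leftrightarrow$ (2) is in place, I would invoke Corollary \ref{cor:6zrn} part 1 to obtain the equivalences among (2), (3) and (4). Statement (3) follows because the consistent-extension statement of that corollary gives $\sim' \subset \,\sim_{\sim' \cup \prec''}$ and $\prec'' \subset \,\prec_{\sim' \cup \prec''}$, and Lemma \ref{lem:8iuz} then supplies chain-consistency of the attainable trades with both $\sim'$ and $\prec''$; statement (4) is immediate from the total-preorder extension in the corollary. Reflexivity of $\preccurlyeq_{\sim' \cup \prec''}$, needed to make it a preorder, comes for free from the reflexivity of $\sim'$. The final uniqueness assertion of the theorem is exactly part 2 of Corollary \ref{cor:6zrn}.

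The main obstacle I expect is the directional bookkeeping in the arbitrage-to-chain translation: a down-trade moves from the better good to the worse one, whereas the corresponding pair $(\gamma_{i+1}, \gamma_i) \in \,\prec''$ is recorded from the worse good to the better, so the trade chain and the closed chain of Definition \ref{def:Consistency} literally traverse their shared elementary links in opposite directions. This is conceptually straightforward once stated explicitly, but easy to muddle if done casually.
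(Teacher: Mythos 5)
Your proposal is correct and follows essentially the same route as the paper: establish that an arbitrage opportunity is exactly a forbidden closed chain for $\preccurlyeq_1\,=\,\sim'$ and $\preccurlyeq_2\,=\,\prec''$ (using that $\prec_1$ and $\sim_2$ are empty), and then read off (2)--(4) and the uniqueness claim from Corollary \ref{cor:6zrn}. Your explicit handling of the link-reversal between the trade direction of a down-trade and the ordered pairs of Definition \ref{def:Consistency} is in fact more careful than the paper's brief argument, which passes over that bookkeeping.
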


\begin{proof}
Def.~\ref{def:Consistency} implies that the chain-consistency of $\preccurlyeq_1 \, = \, \sim'$ and $\preccurlyeq_2 \, = \, \prec''$
is equivalent to no-arbitrage (note that here $\prec_1 \, = \, \sim_2 \, = \emptyset$). This is because on the one hand,
an arbitrage trade chain establishes a closed chain as in Def.~\ref{def:Consistency}, where the
at least one $(\gamma_i,\gamma_{i+1})\in\;\prec_1 \cup \prec_2$ is warranted by $\alpha \prec'' \beta$ .
Vice versa, if a chain as in Def.~\ref{def:Consistency} exists, then the at least one 
$(\gamma_i,\gamma_{i+1})\in\;\prec_1 \cup \prec_2$ in fact implies at least one $(\gamma_i,\gamma_{i+1})\in\;\prec''$,
so I can set $\alpha = \gamma_i$ and $\beta = \gamma_{i+1}$.
Theorem \ref{theo:4oib} now is a direct consequence of Corollary \ref{cor:6zrn}.
\end{proof} 

For a distinction of notions, I mention that in \cite{MaAn}, a `complete' preference order is called `rational'. 
In mathematics, however, the descriptors `total' (Def.~\ref{def:relations}) or `strongly connected' are used if all elements of $\mathcal{M}$ can
be compared using the considered preorder. The notion `rational' I would rather use interchangeably to `arbitrage-free'.

Like FTAP, Theorem \ref{theo:4oib} is an extension theorem. It equates no-arbitrage,
which is the consistency of two pre-existing preference orders on a part of the cash flows,
to the existence of a preference order (FTAP: price functional/RNPM/EMM) for all cash flows which is
consistent with the two pre-existing preference orders. 
Moreover, if this extension of the preference orders is uniquely determined, 
the market has a completeness property in the sense that all cash flows were comparable {\em a priori}
and could be traded for one another in at least one direction.
Hence, the parallels of FTAP to Theorem \ref{theo:4oib} are striking, and while my result
stays clear of any linearity assumptions or stochastic models and therefore does not directly imply FTAP in its modern form, 
I consider Theorem \ref{theo:4oib} a broad generalization of it.


\section{Application in microeconomic theory}

In this section, I assume that the preferences of two entities, e.g.~of two individuals, named 1 and 2, are given by the
additive positively homogeneous preorders, i.e.~vector preorders,
$\preccurlyeq_1$ and $\preccurlyeq_2$ on the real linear space $\mathcal{M}$ with the nullvector $o$. 
For instance, one could think of preferences relating to 
appropriately defined deterministic or stochastic cash flows.
Additivity, positive homogeneity, and reflexivity at first, and superficially, appear to be reasonable properties 
of preferences in a financial context. 

If individual 1 holds good $\alpha \in \mathcal{M}$ and 2 holds $\beta \in \mathcal{M}$, then 
$\alpha + \beta \in \mathcal{M}$ is the total amount in goods (e.g.~the total cash flow) that is allocated to 1 and 2.
If for $\alpha', \beta' \in \mathcal{M}$ with $\alpha' + \beta' = \alpha + \beta$ it holds that 
$\alpha \preccurlyeq_1 \alpha'$ and $\beta \preccurlyeq_2 \beta'$, and
at least one $\preccurlyeq_i$ for $i=1,2$ can be replaced with its strict version, $\prec_i$,
then $(\alpha', \beta')$ is called a Pareto improvement of the allocation $(\alpha, \beta)$.
As is well-known, a Pareto improvement therefore improves the situation of at least one individual while not worsening 
the situation of the other.

\begin{theorem}
\label{theo:8pov}
Let $\preccurlyeq_1$ and $\preccurlyeq_2$ be two additive positively homogeneous preorders (vector preorders).
There exists a Pareto improvement for any allocation between two entities with these preferences 
if and only if $\preccurlyeq_1$ and $\preccurlyeq_2$ are not chain-consistent with one another.
\end{theorem}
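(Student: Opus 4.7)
The plan is to translate between closed chains as in Definition \ref{def:Consistency} and Pareto improvements by using the additivity, positive homogeneity, and reflexivity of the two vector preorders, together with Lemma \ref{lem:1csv} to keep track of strict versus non-strict links.

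For the ``only if'' direction, I would apply the Pareto-improvement hypothesis to the trivial allocation $(o,o)$. A Pareto improvement $(\alpha',\beta')$ then satisfies $\alpha'+\beta'=o$, $o\preccurlyeq_1\alpha'$, $o\preccurlyeq_2\beta'$, with at least one of the latter strict. Since $\beta'=-\alpha'$, adding the reflexivity relation $\alpha'\sim_2\alpha'$ to $o\preccurlyeq_2-\alpha'$ via Lemma \ref{lem:1csv} produces $\alpha'\preccurlyeq_2 o$, with strictness preserved. The triple $o,\alpha',o$ is then a closed chain with links in $\preccurlyeq_1\cup\preccurlyeq_2$ and at least one link in $\prec_1\cup\prec_2$, so $\preccurlyeq_1$ and $\preccurlyeq_2$ are not chain-consistent.

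For the ``if'' direction, suppose $\gamma_1,\ldots,\gamma_n=\gamma_1$ is a closed chain with each link in $\preccurlyeq_1\cup\preccurlyeq_2$ and at least one link in $\prec_1\cup\prec_2$. I would set $\delta_i=\gamma_{i+1}-\gamma_i$ and $I_j=\{i:(\gamma_i,\gamma_{i+1})\in\,\preccurlyeq_j\}$ for $j=1,2$. Adding the reflexivity relation $(-\gamma_i)\sim_j(-\gamma_i)$ to each link gives $o\preccurlyeq_j\delta_i$, with strictness preserved by Lemma \ref{lem:1csv}. Summing within each class using additivity yields $o\preccurlyeq_1\delta'_1$ and $o\preccurlyeq_2\delta'_2$, where $\delta'_j=\sum_{i\in I_j}\delta_i$, and at least one of these relations is strict. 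Closure of the chain forces $\delta'_1+\delta'_2=\sum_{i=1}^{n-1}\delta_i=o$. For an arbitrary allocation $(\alpha,\beta)$, the pair $(\alpha+\delta'_1,\beta+\delta'_2)$ therefore preserves the total sum, and adding $\alpha\sim_1\alpha$ and $\beta\sim_2\beta$ via Lemma \ref{lem:1csv} shows it is a Pareto improvement.

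The only real obstacle is the degenerate case in which all chain links happen to lie in a single $\preccurlyeq_j$; there one would have $\delta'_1=\delta'_2=o$, and applying Lemma \ref{lem:1csv} to the strict link would force $o\prec_j o$, contradicting the irreflexivity of $\prec_j$ (this is exactly the self-consistency recorded in Lemma \ref{lem:7pom}). Hence this case cannot arise under the chain-inconsistency hypothesis, and the construction above always delivers a genuine Pareto improvement. Everything else reduces to bookkeeping that is fully absorbed by Lemma \ref{lem:1csv}.
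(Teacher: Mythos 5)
Your argument is in substance the same as the paper's: both directions reduce the statement to the existence of a two-link closed chain $o \preccurlyeq_1 \delta \preccurlyeq_2 o$ with at least one strict link, obtained in the ``if'' direction by telescoping the closed chain and summing the links of each relation separately, and in the ``only if'' direction by translating a Pareto improvement (you specialize to the allocation $(o,o)$, the paper works with a generic allocation, which changes nothing) back into such a chain via additivity and reflexivity, with Lemma \ref{lem:1csv} tracking strictness. There is, however, one bookkeeping slip in your ``if'' direction: with $I_j = \{i : (\gamma_i,\gamma_{i+1}) \in\, \preccurlyeq_j\}$ the two index sets need not be disjoint, since a link may lie in $\preccurlyeq_1 \cap \preccurlyeq_2$; in that case $\delta'_1 + \delta'_2 = \sum_i \delta_i + \sum_{i \in I_1 \cap I_2} \delta_i$, which need not be $o$, so the pair $(\alpha+\delta'_1, \beta+\delta'_2)$ need not preserve the total allocation and is then not a Pareto improvement in the defined sense. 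The repair is immediate and is exactly what the paper does by working with \emph{disjoint} index sets $I$ and $J$: assign each link to exactly one relation containing it, taking care to place the distinguished strict link in a relation in which it is strict, and then your summation, strictness, and telescoping arguments go through unchanged (your treatment of the degenerate single-relation case via Lemma \ref{lem:7pom} is fine).
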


By Theorem \ref{theo:2qwc}, under the conditions of Theorem \ref{theo:8pov},
chain-consistency is equivalent to the existence of a total additive positively homogeneous preorder
which consistently extends both, $\preccurlyeq_1$ and $\preccurlyeq_2$.
So, expressed differently, {\em if two individuals with additive positively homogeneous preferences
are not both `cut from the same cloth', there exists no Pareto optimal allocation,
and, in this sense, no equilibrium.} Obviously, an analog statement follows for an individual with preferences $\preccurlyeq_1$
vis-\`a-vis an entire market, if the attainable trades of this market as in Def.~\ref{def:24wec} of
Sec.~\ref{sec:Application in financial economics} are given by $\preccurlyeq_2$.
In a loose sense, my result adds to the row of impossibility problems concerning preference orders in microeconomics,
such as Arrow's Impossibility Theorem or the Condorcet Paradox (cf.~\cite{MaAn}).

\begin{proof}[Proof of Theo.~\ref{theo:8pov}]
Generally, for the following, note that for any additive preorder $\preccurlyeq$ on a linear space $\mathcal{M}$,
$\alpha \preccurlyeq \beta$ 
implies 
$\alpha - \beta \preccurlyeq o$
and 
$- \beta \preccurlyeq - \alpha$, since, by additivity and reflexivity, I can consecutively add 
$- \beta \preccurlyeq - \beta$ and then
$- \alpha \preccurlyeq - \alpha$ to $\alpha \preccurlyeq \beta$.
Now, if $(\alpha', \beta')$ is a Pareto improvement of some $(\alpha, \beta) \in \mathcal{M}\times\mathcal{M}$,
then $\alpha' = \alpha + \delta$ and $\beta' = \beta - \delta$ for some $\delta\in\mathcal{M}$,
since the total allocation does not change. With the rules explained earlier and
since $\alpha \preccurlyeq_1 \alpha + \delta$ and $\beta \preccurlyeq_2 \beta - \delta$, a Pareto improvement 
therefore is equivalent to the existence of some $\delta \in \mathcal{M}$ with
\begin{equation}
\label{eq:30rtm}
o \;
\preccurlyeq_1
\; \delta \;
\preccurlyeq_2 \;
o  ,
\end{equation}
such that at least one $\preccurlyeq_i$ for $i=1,2$ in \eqref{eq:30rtm} can be replaced with its strict version, $\prec_i$.\\
$\Rightarrow$:
By Def.~\ref{def:Consistency}, if the preferences are consistent, then a closed chain of length two as just explained
in \eqref{eq:30rtm} is impossible.
Thus, a Pareto improvement implies inconsistency.\\
$\Leftarrow$:
Inconsistency implies for some $n\in\mathbb{N}\setminus\{0,1\}$
that $\gamma_i \preccurlyeq_1 \gamma_{i+1}$ for all $i\in I$ and 
that $\gamma_i \preccurlyeq_2 \gamma_{i+1}$ for all $i\in J$, where $I$ and $J$ are disjoint and nonempty
(cf.~Lemma \ref{lem:7pom}),
$I \cup J = \{1,\ldots,n-1\}$, and $\gamma_1 = \gamma_n$. 
Additivity and reflexivity of $\preccurlyeq_1$  and $\preccurlyeq_2$ now imply 
\begin{equation}
\label{eq:31qtm}
o 
\; \preccurlyeq_1 \;
\sum_{i\in I} (\gamma_{i+1} - \gamma_i) ,
\end{equation}
as well as
\begin{equation}
\label{eq:32qtm}
\sum_{i\in J} (\gamma_i - \gamma_{i+1}) 
\; \preccurlyeq_2 \;
o .
\end{equation}
Because of the requirements of Def.~\ref{def:Consistency} and by the second statement of Lemma \ref{lem:1csv},
\eqref{eq:31qtm}, \eqref{eq:32qtm}, or both, must be strict.
However, 
\begin{eqnarray}
\sum_{i\in J} (\gamma_i - \gamma_{i+1})
& = &
\sum_{i=1}^{n-1} (\gamma_i - \gamma_{i+1})
-
\sum_{i\in I} (\gamma_i - \gamma_{i+1})  \\
\nonumber
& = &
\gamma_1 - \gamma_n
+
\sum_{i\in I} (\gamma_{i+1} - \gamma_i) . \\
\nonumber
& = &
\sum_{i\in I} (\gamma_{i+1} - \gamma_i) ,
\end{eqnarray}
and, with $\delta := \sum_{i\in I} (\gamma_{i+1} - \gamma_i)$, one obtains a chain as in \eqref{eq:30rtm}
from \eqref{eq:31qtm} and \eqref{eq:32qtm}.
\end{proof}

In financial economics, or mathematical finance, 
creating a proper financial advantage out of nothing is called a `free lunch' or an `arbitrage'
(see also Sec.~\ref{sec:Application in financial economics}). Modern financial
mathematics is based on the principle of `no-arbitrage', the exclusion of arbitrage for the reason that such opportunities 
quickly disappear in real markets.
In finance, it matters what exactly `proper' means, because there it needs to be an objective notion, while in the situation 
at hand here, it does not necessarily, since I can consider the preferences of two individuals, and all that matters is what 
the individuals prefer over the zero, $o$, and not, for instance, what a market summarily prefers.

Reconsidering the property of additivity, in a financial context, this may not be the wisest of properties for individual preferences. 
In risk management or insurance, diversification, which can stem from stochastic independence, has always been a very important aspect.
If $\alpha \preccurlyeq_i \beta$ and $\alpha' \preccurlyeq_i \beta'$, but $\alpha$ and $\alpha'$ enabled diversification by independence,
while $\beta$ and $\beta'$ do not (and maybe do quite the opposite when lumped together), preferring $\beta + \beta'$
over $\alpha + \alpha'$ could be a bad idea. Theorem \ref{theo:8pov} provides a further reason why, generally, additive preferences 
are problematic and, so I presume, unrealistic.




\end{document}